\newcommand{\DD}{\mathbb{D}}
\newcommand{\R}{\mathbb{R}}
\newcommand{\N}{\mathbb{N}}
\newcommand{\EE}{\mathbb{E}}
\newcommand{\PP}{\mathbb{P}}
\newcommand{\F}{\mathcal{F}}
\newcommand{\Pcal}{\mathcal{P}}
\newcommand{\Lcal}{\mathcal{L}}
\newcommand{\Fcal}{\mathcal{F}}
\newcommand{\Hcal}{\mathcal{H}}
\newcommand{\1}{\mathbbm{1}}
\newcommand{\Mcal}{\mathcal{M}}
\newcommand{\Kcal}{\mathcal{K}}
\newcommand{\Ncal}{\mathcal{N}}
\newcommand{\dt}{\partial_t}
\newcommand{\dx}{\partial_x}
\newcommand{\dz}{\partial_z}
\newcommand{\dzz}{\partial_{zz}}
\newcommand{\norm}[1]{\left\lVert#1\right\rVert}
\newcommand{\s}{\sigma}
\theoremstyle{plain}
\newtheorem{theorem}{Theorem}[section]
\newtheorem{proposition}[theorem]{Proposition}
\newtheorem{definition}[theorem]{Definition}
\newtheorem{conjecture}[theorem]{Conjecture}
\newtheorem{lemma}[theorem]{Lemma}
\begin{document}

\begin{frontmatter}
%%%%%%%%%%%%%%%%%%%%%%%%%%%%%%%%%%%%%%%%%%%%%%
%%                                          %%
%% Enter the title of your article here     %%
%%                                          %%
%%%%%%%%%%%%%%%%%%%%%%%%%%%%%%%%%%%%%%%%%%%%%%
\title{Convergence and Wave Propagation for a System of Branching Rank-Based Interacting Brownian Particles}
%\title{A sample article title with some additional note\thanksref{T1}}
\runtitle{A System of Branching Rank-Based Interacting Brownian Particles}
%\thankstext{T1}{A sample of additional note to the title.}

\begin{aug}
%%%%%%%%%%%%%%%%%%%%%%%%%%%%%%%%%%%%%%%%%%%%%%%
%% Only one address is permitted per author. %%
%% Only division, organization and e-mail is %%
%% included in the address.                  %%
%% Additional information can be included in %%
%% the Acknowledgments section if necessary. %%
%% ORCID can be inserted by command:         %%
%% \orcid{0000-0000-0000-0000}               %%
%%%%%%%%%%%%%%%%%%%%%%%%%%%%%%%%%%%%%%%%%%%%%%%
\author[A]{\fnms{Mete} ~\snm{Demircigil}\ead[label=e1]{mete.demircigil@math.cnrs.fr}\orcid{0009-0002-0311-5578}},
\author[B]{\fnms{Milica}~\snm{Tomasevic}\ead[label=e2]{milica.tomasevic@polytechnique.edu}}
%%%%%%%%%%%%%%%%%%%%%%%%%%%%%%%%%%%%%%%%%%%%%%
%% Addresses                                %%
%%%%%%%%%%%%%%%%%%%%%%%%%%%%%%%%%%%%%%%%%%%%%%
\address[A]{Department of Mathematics, University of Arizona\printead[presep={,\ }]{e1}}

\address[B]{CMAP, CNRS, \'{E}cole polytechnique, Institut Polytechnique de Paris\printead[presep={,\ }]{e2}}
\end{aug}

\begin{abstract}
    In this work we study a branching particle system of diffusion processes on the real line interacting through their rank in the system. Namely, each particle follows an independent Brownian motion, but only $K\geq 1$ particles on the far right are allowed to branch with constant rate, whilst the remaining particles have an additional positive drift of intensity $\chi>0$. This is the so called \textit{Go or Grow} hypothesis, which serves as an elementary hypothesis to model cells in a capillary tube moving upwards a chemical gradient. \\
   Despite the discontinuous character of the coefficients for the movement of particles and their demographic events, we first obtain the limit behavior of the  population as $K \to \infty$ by weighting the individuals by $1/K$. \\
   Then, on the microscopic level when $K$ is fixed, we investigate numerically the speed of propagation of the particles and recover a threshold behavior according to the parameter $\chi$ consistent with the already known behavior of the limit. Finally, by studying numerically the ancestral lineages we categorize the traveling fronts as \textit{pushed} or \textit{pulled} according to the critical parameter $\chi$.
\end{abstract}

\begin{keyword}[class=MSC]
\kwd[Primary ]{60J70}
\kwd{60J85}
\kwd{35K10}
\kwd[; secondary ]{92C17}
\end{keyword}

\begin{keyword}
\kwd{Applications of Brownian motions}
\kwd{Applications of branching processes}
\kwd{Chemotaxis}
\end{keyword}

\end{frontmatter}

%%%%%%%%%%%%%%%%%%%%%%%%%%%%%%%%%%%%%%%%%%%%%%
%% Please use \tableofcontents for articles %%
%% with 50 pages and more                   %%
%%%%%%%%%%%%%%%%%%%%%%%%%%%%%%%%%%%%%%%%%%%%%%
\tableofcontents

%%%%%%%%%%%%%%%%%%%%%%%%%%%%%%%%%%%%%%%%%%%%%%
%%%% Main text entry area:

\section{Introduction}

In this work we study a rank-based interacting and branching  stochastic particle system in $\R$.

\begin{figure}[t]
\begin{center}
\includegraphics[width=\linewidth]{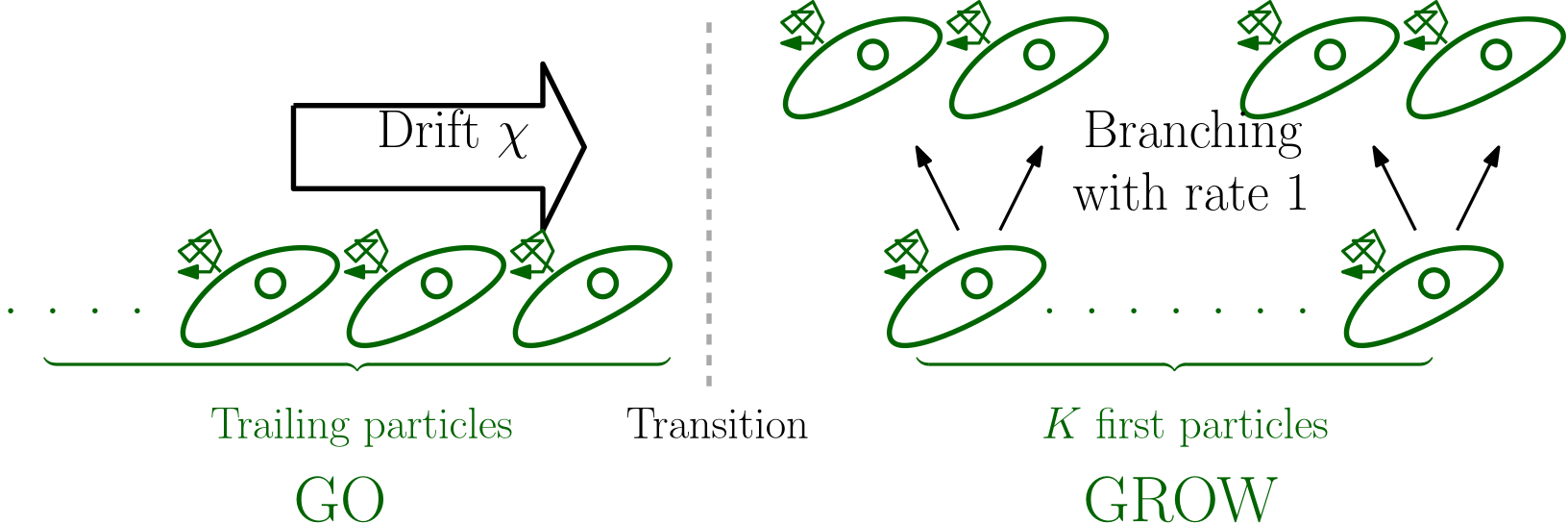}
\caption{Cartoon representation of the model. The first $K$ cells  are in the \textit{Grow} regime and branch with rate 1, whilst the trailing cells are in the \textit{Go} regime and drift with a drift coefficient $\chi$. Moreover, all cells undergo Brownian motion. }
\label{fig:cartoon}
\end{center}
\end{figure}

Our model is motivated by exhibiting traveling wave-like behavior for a population of cells in a capillary tube that move from one end of the tube by following gradients of a chemical nutrient \cite{adler1966,saragosti2011}: A population of the bacteria \textit{Escherichia coli} placed at the left end of the tube starts consuming and depleting a nutrient (\textit{e.g.} oxygen or galactose), which triggers a collective rightward movement under the form of a traveling wave towards areas of higher concentration. This phenomenon of directed motion is known as \textit{chemotaxis}. Moreover, the study \cite{cochet2021} has shown that in a similar, but radial setting involving  the amoeba \textit{Dictyostelium discoideum}, cell division, in combination with chemotaxis, plays a crucial role in the emergence of a traveling wave.

The model that we propose in this article can be seen as an elementary stochastic particle system model combining biased Brownian motion (as a model of chemotaxis) and particle branching (as a model of cell division). Namely, let us fix $K\geq 1$ and start from $N\geq K$ particles in some distinct positions in $\R$. Let us define the rank of a particle as its position in the queue  starting from the rightmost particle, \textit{i.e.} for $x_1, \dots, x_N \in \R$, 
$$rank(x_i) = \sum_{j=1}^N \1\{x_j\geq x_i\}.$$
All the particles move according to independent Brownian motions and the ones, whose rank is larger than $K$, have a positive constant drift $\chi>0$, whilst the particles, whose rank is smaller or equal to $K$, can branch with constant rate $1$. When a particle branches, a new one is added at its position moving according to a new independent Brownian motion. As the Brownian motions get instantaneously separated, there is no confusion with the rank of the new particle after its birth. These two different mechanisms will be called throughout this paper \textit{Go} or \textit{Grow}, which is a modeling hypothesis proposed in \cite{cochet2021,demircigil2022} for a parabolic PDE (although the initial occurrence \cite{hatzikirou2012} of '\textit{Go} or \textit{Grow}' describes a slightly different mechanism in glioma cells). \textit{Go} means that, whilst the particles, whose rank is high, \textit{i.e.} queuing the population, do not divide, their movement is polarized to the right via their positive drift $\chi>0$ and thus they contribute to a displacement of the population to the right. \textit{Grow} means that the particles with low rank, \textit{i.e.} who are heading the population, do not have a positive drift, yet, they are able to branch with rate 1 and as such spread the population to the right by creating mass at the front.

Given the biological context \cite{cochet2021,demircigil2022},  the cells that are subject to the \textit{Grow} regime are in an accommodating environment with a sufficient concentration of nutrient, sustaining their normal metabolism. Thus, they are able to undergo cell division and do not have an incentive to move towards zones of higher nutrient concentration. On the other hand, cells subject to the \textit{Go} regime find themselves in a zone with less nutrient and as such start a directed motion towards zones of higher nutrient concentration. 

The parameter $K$, which is fixed on the microscopic level, regulates the switch between the two regimes. To extend the biological analogy, we can consider that the environment is initially in an accommodating state with a sufficient concentration of nutrient. Yet, as the $K$ first cells move through that environment, the concentration of nutrient is depleted, leaving the trailing cells in a low concentration environment.

Our model lies at the intersection of two different well-studied classes of stochastic processes. The first class are so-called rank-based diffusion processes: A system of finite and constant number of particles interact with each other through their rank in the system, which determines their individual behavior (drift and diffusion), see \textit{e.g.}  \cite{ichiba2013} and its Introduction for references. As an example, we mention the Atlas model proposed in \cite{fernholz2002}: $N$ particles diffuse and the last one "pushes" the whole population, by having a positive drift. Later on, generalizations of the Atlas model have been proposed (see for instance \cite{banner2005,pal2008}). These systems also arise in the context of approximation of some  nonlinear evolution problems by interacting particle systems, see \textit{e.g.}  \cite{jourdain2013, jourdain2000, jourdain2000bis, BossyTalay}.\\
The other class are the rank-dependent branching Brownian motions following the terminology \cite{groisman2021}: There, the particles undergo Brownian motion and depending on their rank, they can divide (and possibly also die). Among this class, we mention the $N$-Branching Brownian Motion ($N$-BBM) introduced in \cite{maillard2016} (see also \cite{demasi2019,berestycki2024}): $N$ particles undergo Brownian motion and each particle divides with rate equal to one. When a particle divides the left-most particle is removed from the system.

Our first goal is to find the behavior of the population in the limit as $K\to \infty$,  weighting the individuals by $1/K$.  The main difficulty is the discontinuous character of our coefficients for the movement of particles and their demographic events. That is why, inspired by the works of Jourdain and his collaborators, see \textit{e.g.}  \cite{jourdain2000,jourdain2013}, we need to work with the empirical cumulative distribution function, which enables a  discrete integration by parts argument.
An additional difficulty here is the uniqueness of the limit that we tackle in the following way: 
First, we start by proving that the cumulative distribution function satisfies a mild form, from which we can deduce a regularity result. This regularity enables us to work then with the density function. Uniqueness is finally established via a contraction argument on the position of the threshold switching between the \textit{Go} and the \textit{Grow} regime.

Our other goal is to show via numerical investigation the relevance of the present model to study traveling wave-like behavior in cells undergoing chemotaxis in a capillary tube \cite{adler1966,saragosti2011,cochet2021}. On the one hand, we give numerical evidence for the emergence of traveling waves in our model. On the other hand, using a newly proposed methodology proposed independently in \cite{calvez2022,forien2022}, which we will refer to as ancestral lineage methodology, we demonstrate an alternative viewpoint on a spreading property of the population, known as the \textit{pushed} or \textit{pulled} dichotomy \cite{stokes1976}. Qualitatively, a \textit{pulled} wave is one in which the propagation of the wave results from the dynamics of the cells at the very leading edge of the wave, which are thus "pulling" the wave, whilst in a \textit{pushed} wave it is the whole population that contributes to the propagation of the wave. Moreover, these concepts are biologically relevant, as pushed waves promote genetic diversity inside the propagating
population, whereas pulled waves tend to select only the phenotypes at the leading edge
of the invasion \cite{roques2012}. The ancestral lineage methodology \cite{calvez2022,forien2022} aims at describing the backwards in time evolution of the relative position to the wave of a cell or its corresponding ancestor. In these latter works, it is shown that in the large population regime as $K\to+\infty$, these ancestral lineages converge to a simple diffusing particle with a drift term depending solely on the particle's. Whilst these works study cases involving \textit{pushed} waves, the application of this methodology to a model that exhibits a transition between a \textit{pulled} and \textit{pushed} regime, such as it is done in the present work, is new and sheds light on the difference of the dynamics of the ancestral lineages in these two regimes. In the \textit{pushed} regime, we observe that the ancestral lineage distribution converges to an equilibrium, which is consistent with the works in \cite{calvez2022,forien2022}. This equilibrium may be seen as a description of the contribution of each part of the wave to the propagation. On the contrary, in the \textit{pulled} regime, the ancestral lineages continuously drift to the right without ever reaching an equilibrium, selecting particles that lie further and further toward the leading edge of the population. Moreover, at the threshold of the transition from \textit{pulled} to \textit{pushed}, an interesting behavior is observed. Following \cite{an2021}, the spreading is designated as \textit{pushmi-pullyu}: There, the mean ancestral lineage drifts to the right, thus being consistent with a \textit{pulled} regime, yet, the mode of its distribution remains centered at the position of the $K$-th particle, which is a property shared with the \textit{pushed} regime.

\subsubsection*{Outline of the Paper}

In Section \ref{sec-presentation}, we give a detailed presentation of the stochastic particle system as well as state and discuss the main theorems and results presented in this work.

In Section \ref{sec:construction}, we briefly give a rigorous construction of the model and prove some preliminary properties that will be useful in the proofs of the following sections. 

In Section \ref{sec-convergence},  we prove that the limit of the empirical cumulative distribution function converges to a PDE governing the cumulative distribution function of a population in a continuous model. This extends the convergence results for rank-based systems in \cite{jourdain2000} to branching particles, where the branching mechanism also depends on the rank, as well as to systems with discontinuous drift coefficients. We do so by first proving tightness of the empirical measure, which follows standard procedures of many works on branching populations (see \textit{e.g.} \cite{FontbonaMeleard}), combined with certain technical amendments specific to the present problem.
The main point for us is that all the coefficients relative to the movement or the demographic events are bounded, and the difficulty lies in the identification of the limit, as despite being bounded, the coefficients are discontinuous. Switching to the empirical cumulative distribution smoothens the coefficients to become Lipschitz continuous, but not globally smooth which is the usual case in the previous works \cite{jourdain2013}.

In  Section \ref{sec-uniqueness}, the uniqueness of the limit is proven by applying a contraction principle to the position of the threshold mediating the switch between the \textit{Go} and \textit{Grow} regime. The uniqueness proof here is to an extent reminiscent of a fixed point strategy for existence and uniqueness in an analogous model \cite{demircigil2022}, but is based on different ideas used to describe the position of the threshold.

In Section \ref{sec-numerics}, we propose interesting numerical investigations that illustrate: 1. the spreading properties of the model and 2. the ancestral lineage methodology applied to the present model. First, we observe that the population spreads linearly, which justifies the relevance of the model for investigating traveling wave-like behaviors. Depending on the drift parameter $\chi$, we observe two different regimes of spreading, which are consistent with equivalent regimes for the large population limit studied in \cite{demircigil2022,demircigilhenderson}. Moreover, as $K$ increases, the spreading speed gets closer and closer to the analogous wave speed in the large population limit $\s^*=\left\{\begin{array}{ll}
     \chi + \frac{1}{\chi}& \chi>1  \\
     2&  \chi\leq 1
\end{array}\right.$. These observations on the spreading properties lead us to establish a conjecture, whose proof is left to future work. Then, by analogy of the results on the ancestral lineage dynamics in \cite{calvez2022,forien2022}, we conjecture that in the present model the ancestral lineages converge in the large-population regime ($K\to +\infty$) to a similar stochastic process, \textit{i.e.} a diffusing particle, whose evolution depends solely on its relative position within the wave. The consequences of this conjecture are illustrated numerically. In particular, the difference of the behaviors of the ancestral lineages in the \textit{pushed} ($\chi>1$) and \textit{pulled} ($\chi\leq1$) regimes  are investigated, as well as the threshold case ($\chi=1$), which will be referred to as \textit{pushmi-pullyu} ($\chi=1$) following \cite{an2021}, and which may be seen as a special subcase of a \textit{pulled} wave.

We finish this section by giving some notations that will be used throughout the whole paper.

\subsubsection*{Notations}
\begin{itemize}
    \item $\Mcal_F(\R) $
    denotes the set of finite measures on $\R$ endowed with the weak topology. $\Mcal$ denotes the subset of $\Mcal_F(\R)$ containing finite point measures : 
    $$\Mcal:= \left\{\sum_{i=1}^n \delta_{x_i}; \quad n \in \N , x_1, \dots, x_n \in \R\right\}.$$
    \item Similarly, we introduce the space of weighted finite point measures on $\R$ as
$$\Mcal_K:= \frac{1}{K}\Mcal=\left\{\frac{1}{K}\sum_{i=1}^n \delta_{x_i}; \quad n \in \N , x_1, \dots, x_n \in \R\right\}.$$
    \item $\DD([0,T), \Mcal_F(\R))$ denotes the Skorokhod space of left limited and right continuous functions from $[0,T)$ to $\Mcal_F(\R)$, endowed with the Skorokhod topology. 
    \item By default  $\DD([0,T], \Mcal_F(\R))$ is endowed with weak topology. When we want to emphasize it, we will write  $\DD([0,T], \Mcal^w_F(\R)))$.  We write $\DD([0,T], \Mcal^v_F(\R)))$ when  we endow $\Mcal_F(\R)$ with the vague topology.
    %\item $\JJ$ denotes an abstract index set.
    \item $\mathcal H: \R \to \R$ denotes the heavy side function, \textit{i.e. } $\mathcal H(x) = \mathbbm{1}(x\geq 0)$ .
    \item Let $\N^\ast = \N \setminus \{0\}$. Let $H^K = (H_1^K , \dots, H_k^K ,\dots) : \Mcal_K \to  (\R \cup \{-\infty\} )^{\N^\ast}$ be defined by
$
H^K \left(\frac{1}{K}\sum_{i=1}^n \delta_{x_i} \right) = (x_{\sigma(1)} , \dots, x_{\sigma(n)} , -\infty,-\infty, \ldots),$ where $\sigma$ is a permutation such that $x_{\sigma(1)}\geq x_{\sigma(2)}\geq \ldots \geq x_{\sigma(n)}$. Using a slight abuse of notation, we will simply drop the superscript $K$ and write $H=H^K$.
\item Individual labels can be chosen in the Ulam-Harris-Neveu set  $\mathcal U = \cup_{n\geq 1} \N^\ast \times \{1,2\}^n$.
\end{itemize}

\section{Presentation of the Model and of the Main Results}
\label{sec-presentation}

We first present in detail the stochastic model under consideration in Section~\ref{sec-individualbasedmodel} and we give one of the main statements about its properties (see Proposition~\ref{prop:mu-charact}). Then, in Section~\ref{subsec:mainresult}, we present the large population limit equation and its cumulative distribution function. The main result is given in Theorem~\ref{thm:mainresult} and concerns the convergence result of the cumulative empirical distribution. In Theorem~\ref{thm:uniqueness}, under additional assumptions on the initial data, the uniqueness of the limit is shown. Finally, we discuss in Section~\ref{subsec:numericalResPres} our numerical results on the speed of propagation of the particle system and on its ancestral lineages, differentiating between pushed and pulled fronts.

\subsection{Rank-Based Branching Particle System}
\label{sec-individualbasedmodel}

Let us now describe the dynamics our population. It will be modeled by a point measure valued process that will take into account the motion of the particles and the birth events. Fix $K\in N$ and define the space of weighted finite point measures on $\R$ as
$$\Mcal_K:= \left\{\frac{1}{K}\sum_{i=1}^n \delta_{x_i}; \quad n \in \N , x_1, \dots, x_n \in \R\right\}.$$
  The stochastic process $(\mu_t^K)_{t\geq 0}$ represents the spatial configuration of our particles weighted by $\frac{1}{K}$. That is, for $t\geq 0$,
  \begin{equation}
      \label{eq:muk}
      \mu_t^K= \frac{1}{K}\sum_{i\in V_t^K} \delta_{X_t^i},
  \end{equation}
  where $V_t^K$ is the set of labels of individuals alive at time $t$ and $X_t^i$ is the position of the individual with the label $i$. For a given initial state of the population $\mu_0^K$ with $N_0^K$ individuals (or atoms in $\mu_0^K$ ), we label
its atom by $1, \dots, N^K_0$. The offspring of an individual $u\in \mathcal{U}$ are denoted by $u1, u2$. Denoting by $N_t^K$ the size of the population at time $t$, it holds that $N^K_t= |V_t^K|= K \langle \mu_t^K,1 \rangle$.
  
  First, we will give an informal description of the dynamics of individuals. The initial population at $t=0$ is given by $\mu_0^K$. The general \textit{Go or Grow} idea is that the  particle population is divided into two subgroups as follows:
  \begin{itemize}
   \item (\textit{Grow}) The $K$ particles that are furthest to the right of the real line diffuse with a constant diffusion coefficient and may divide with rate equal to one. By division we mean that at the position of the particle that divides, a new particle is added to the system.
      \item (\textit{Go}) The rest of the particles diffuse,  have a constant drift coefficient $\chi>0$ and do not divide.
    
  \end{itemize}
Notice that the total rate of division is always equal to $K$. Hence to determine the instants in which new particles arrive to the system one may consider an exponential clock with  parameter $K$. Once that clock rings, one chooses uniformly among the $K$ particles on the far right and adds a new one in its place. \\
As for the positions  between the times of birth events $(T_m)_{m\geq 0}$, with $T_0=0$,  they evolve according to the following dynamics, for $t\in (T_m, T_{m+1}],$
\begin{equation}
    \label{eq:sde-positions}
    X_t^i= X^i_{T_m}+ \chi \int_{T_m}^t \mathbbm{1}_{( \sum_{j\in  V_{T_m}}\mathcal  H(X_s^j-X_s^i)>K)} ds + \sqrt{2}(W_t^i - W^i_{T_m}), \quad  i\in V_{T_m}.
\end{equation}
where $(W^i)_{i\in \mathcal{U}}$ are independent standard  one-dimensional Brownian motions defined on some probability space $(\Omega, \F, \PP)$. We notice here that once the labels  of individuals and the times $T_m$ are given, the SDE of type \eqref{eq:sde-positions} admits a unique strong solution, as the diffusion coefficient is constant and the drift bounded (see \cite{Veretennikov1982}). In addition, as the drift is bounded and as we are in $\R$, by Girsanov transformation particles may collide but will be instantaneously separated by their respective Brownian motions. We denote   the drift term of a particle at the position $x$ belonging to a population state $\nu\in \Mcal_K $  by   $\chi a(x,\nu)$, where
$$a(x,\nu):= \mathbbm{1}_{(\mathcal \langle  \nu , \mathcal{H}(\cdot-x)\rangle > 1)}.$$
Similarly, the individual birth rate of a cell at the position $x$ belonging to a population state $\nu\in \Mcal_K $   is   
$$b(x,\nu):= 1-a(x,\nu)=\mathbbm{1}_{(\mathcal \langle  \nu , \mathcal{H}(\cdot-x)\rangle \leq 1)}.$$
Clearly, $|a(\cdot,\cdot)|\leq 1$ and $|b(\cdot,\cdot)|\leq 1.$ 

We will construct rigorously  $(\mu_t^K)_{t\geq 0}$ and give some of its main features in Section~\ref{sec:construction}. Namely,  the following characterization of the process  $(\mu_t^K)_{t\geq 0}$ will be proven:
\begin{proposition}
\label{prop:mu-charact}
Assume $\EE\left[\langle \mu_0^K,1\rangle)\right]<\infty$. Then the process $(\mu_t^K)_{t\geq 0}$ given by \eqref{eq:muk} satisfies the following stochastic differential equation: For all $f \in C^{1,2}_b(\R_+ \times \R)$,  
     \begin{align}
     \nonumber
    &\langle \mu^K_t, f_t\rangle = \langle \mu^K_0, f_0\rangle + \int_0^t \left.\int \left( \frac{\partial f_s}{\partial s}(x) + \Lcal_{\mu^K_s} f_s(x) + b(x,\mu^K_{s})f_s(x) \right)   \mu^K_s (dx) ds\right. \\
    &+ M^{K,f, W}_t + M^{K,f, \Ncal}_t,
    \label{eq-sdemu}
\end{align}
where $\Lcal_\nu f(x):=\chi\frac{\partial f}{\partial x}(x)a(x,\nu)+\frac{\partial^2 f}{\partial x^2}(x)$, $ M^{K,f, W}$ is a continuous local martingale with quadratic variation
 \begin{align}
 \label{eqDefQuadVarW}
    \left \langle M^{K,f,W} \right\rangle_t= \int _0^t \frac{1}{K^2}  \sum_{ i\in V_s^K } \left(\frac{\partial f_s}{\partial x}(X^i_s)\right)^2 ds = \frac{1}{K} \int _0^t \left\langle \mu_s^K, \left(\frac{\partial f_s}{\partial x}\right)^2\right\rangle ds ,
 \end{align} 
and $ M^{K,f, \Ncal}$ is a pure jump local martingale with predictable quadratic variation 
%\begin{align*}
 %   &M^{K,\phi}_t:=\int_0^t \int_{\N^\ast} \int_0^1  \frac{1}{K}\mathbbm{1}_{\left( i\leq  \langle \mu^K_{s^-}, K  \rangle \right)  } \mathbbm{1}_{\left(\theta \leq b(H^i(\mu^K_{s^-}), \mu^K_{s^-})\right)} \phi(H^i(\mu^K_{s^-})) \Ncal(ds, di, d\theta) \\
  %  &- \int_0^t \int_{\N^\ast} \int_0^1  \frac{1}{K}\mathbbm{1}_{\left( i\leq  \langle \mu^K_{s^-}, K  \rangle \right)  } \mathbbm{1}_{\left(\theta \leq b(H^i(\mu^K_{s^-}), \mu^K_{s^-})\right)} \phi(H^i(\mu^K_{s^-})) ds \delta(di) d\theta
%\end{align*}
%is a martingale, satisfying:
 \begin{align}
 \label{eqDefQuadVarN}
    \left\langle M^{K,f, \Ncal} \right\rangle_t= \int_0^t  \frac{1}{K^2}\sum_{i\in V^K_s }b(X^i_{s}, \mu^K_{s}) f^2_s(X^i_s)   ds =\frac{1}{K} \int _0^t \langle \mu_s^K, b(\cdot, \mu^K_{s}) f^2_s \rangle ds. 
\end{align}
In addition, for $F^f: \Mcal_K \to \R$ of the form $F^f(\nu)= F(\langle \nu, f\rangle)$ where $f\in C^2(\R)$ and $F\in C^2_b(\R)$, the process 
$$F^f\left(\mu^K_t\right) - F^f\left(\mu^K_0\right) - \int_0^t L^K F^f\left(\mu_s^K\right) ds $$
is a local martingale where for $\nu \in \mathcal{M}_K$ we have the following expression for $L^K $:
\begin{align*}
    &L^K F^f(\nu) = K \int  b(x, \nu) \left(F^f\left(\nu + \frac{1}{K} \delta_x\right) - F^f(\nu)\right) \nu (dx) + \langle \nu, \Lcal_\nu f \rangle \left(F^f\right)'(\langle \nu, f \rangle )\\
    &+\frac{1}{K}\langle \nu,  (f')^2 \rangle  \left(F^f\right)''(\langle \nu, f \rangle ).
\end{align*}
\end{proposition}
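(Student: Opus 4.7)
The plan is to rely on the pathwise construction of $(\mu_t^K)_{t\ge 0}$ carried out in Section~\ref{sec:construction}: a family of independent Brownian motions $(W^i)_{i\in \mathcal{U}}$ drives the motion of each particle, while an independent Poisson point measure $\Ncal(ds,di,d\theta)$ on $\R_+\times \mathcal{U}\times [0,1]$ with intensity $ds\otimes \#(di)\otimes d\theta$ ($\#$ the counting measure on labels) drives the births, so that individual $i\in V_{s^-}^K$ gives birth at time $s$ precisely when $\theta\le b(X_{s^-}^i,\mu_{s^-}^K)$. Setting $\tau_n^K:=\inf\{t\ge 0:\; N_t^K\ge n\}$, the assumption $\EE[\langle\mu_0^K,1\rangle]<\infty$ together with the bound $b\le 1$ ensures $\tau_n^K\uparrow\infty$ almost surely, so every identity below will first be proved up to $\tau_n^K$ and then passed to the limit to obtain a local-martingale statement.

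First I would apply the classical It\^o formula, label by label, to $f_s(X_s^i)$ using the SDE \eqref{eq:sde-positions}, obtaining on $\{i\in V_s^K\}$
\begin{equation*}
f_t(X_t^i)-f_0(X_0^i)=\int_0^t\Bigl(\tfrac{\partial f_s}{\partial s}+\chi a(X_s^i,\mu_s^K)\tfrac{\partial f_s}{\partial x}+\tfrac{\partial^2 f_s}{\partial x^2}\Bigr)(X_s^i)\,ds+\sqrt{2}\int_0^t \tfrac{\partial f_s}{\partial x}(X_s^i)\,dW_s^i,
\end{equation*}
and then form $\tfrac{1}{K}\sum_{i\in V_s^K}$. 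Although $a(\cdot,\nu)$ is discontinuous, the rank of any given particle is constant on each time interval free of ``exchange'' instants, and such instants form a Lebesgue-null set almost surely because pairs of Brownian paths separate instantaneously; hence the $ds$-integrals are unambiguous. Summation produces the continuous part of \eqref{eq-sdemu} up to $\tau_n^K$, including the continuous local martingale $M^{K,f,W}_t=\tfrac{\sqrt 2}{K}\sum_{i\in\mathcal U}\int_0^{t} \1_{\{i\in V_s^K\}}\tfrac{\partial f_s}{\partial x}(X_s^i)\,dW_s^i$ whose bracket \eqref{eqDefQuadVarW} follows at once from the mutual independence of the $W^i$.

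Next I would account for births. Each birth at time $s$ from a parent $j\in V_{s^-}^K$ adds an atom at $X_{s^-}^j$ with weight $1/K$, so $\langle\mu_s^K,f_s\rangle-\langle\mu_{s^-}^K,f_s\rangle=\tfrac{1}{K}f_s(X_{s^-}^j)$. Expressing the sum over jumps as an integral against $\Ncal$ and compensating by $ds\otimes\#(di)\otimes d\theta$ yields the extra drift $\int_0^t\!\int b(x,\mu_s^K)f_s(x)\,\mu_s^K(dx)\,ds$ in \eqref{eq-sdemu}, together with the pure-jump local martingale $M^{K,f,\Ncal}$; its predictable bracket is the integral of the squared jump size against the compensator, which gives exactly \eqref{eqDefQuadVarN}. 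Combining the continuous and jump contributions and letting $n\to\infty$ yields \eqref{eq-sdemu}.

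Finally, for the generator formula I would apply the It\^o formula with jumps to $F(\langle \mu_t^K,f\rangle)$ based on the decomposition above (taking $f$ time-independent, so $\partial_s f\equiv 0$). The continuous drift and bracket contribute $F'(\langle\nu,f\rangle)\langle\nu,\Lcal_\nu f\rangle+\tfrac{1}{K}\langle\nu,(f')^2\rangle F''(\langle\nu,f\rangle)$, while the jump part, read off $\Ncal$ with jump size $F^f(\nu+K^{-1}\delta_x)-F^f(\nu)$ and compensator rewritten in terms of $\nu$ (each of the $K\langle\nu,1\rangle$ alive particles branches at rate $b$), produces $K\int b(x,\nu)\bigl[F^f(\nu+K^{-1}\delta_x)-F^f(\nu)\bigr]\nu(dx)$; boundedness of $F,F',F''$ ensures integrability of the corresponding compensators. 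The main obstacle is purely bookkeeping: controlling non-explosion via $\tau_n^K$, maintaining a consistent labelling so that the indicators $\1_{\{i\in V_s^K\}}$ in the sums over particles match those inside the $\Ncal$-integrals, and checking that the discontinuous rank-indicator $a(\cdot,\mu_s^K)$ creates no ambiguity in the $ds$-integrals; once these are settled, everything reduces to a direct application of the jump It\^o formula for semimartingales with finite-activity jumps.
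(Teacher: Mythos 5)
Your proposal is correct and follows essentially the same route as the paper: It\^o's formula applied particle by particle between birth events (the paper decomposes along the jump times $T_m$ where you instead carry indicators $\1_{\{i\in V_s^K\}}$ and localize with $\tau_n^K$, which is an equivalent bookkeeping), identification of the jump part as an integral against the Poisson measure, compensation to extract $M^{K,f,\Ncal}$ and the birth drift, and a final application of It\^o with jumps to $F(\langle\mu^K_t,f\rangle)$ for the generator. Your extra remark that the rank-exchange instants are Lebesgue-null is a harmless refinement the paper does not need, since It\^o's formula applies to the semimartingale $X^i$ with its bounded predictable drift regardless of the discontinuity of $a(\cdot,\nu)$ in the space variable.
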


\subsection{Limit PDE and the Convergence Result}
\label{subsec:mainresult}

Formally, as $K\to \infty$, the limit PDE  has the following form 
\begin{equation}
 \label{eq:limitPDE}
 \frac{\partial}{\partial t} u_t= \frac{\partial^2}{\partial x^2}u_t-\chi \frac{\partial}{\partial x}\left( u_t \tilde a\left(\int_x^{+\infty}u_t(y)dy\right) \right)+ u_t  \tilde b\left(\int_x^{+\infty}u_t(y)dy\right),
\end{equation}
with $\tilde a(z):=\mathbbm{1}_{z > 1}$ and $\tilde b(z):=1-\tilde a(z)$. Note that with this notation, we have $\tilde a(\langle \nu, \mathcal{H}(\cdot - x)\rangle)=a(x,\nu)$ and $\tilde b(\langle \nu, \mathcal{H}(\cdot - x)\rangle)=b(x,\nu)$. The difficulty in taking the limit here stems from the fact that the coefficients $\tilde a, \tilde b$ are discontinuous. In order to circumvent that difficulty, similarly to the work in \cite{jourdain2000}, we turn to the cumulative distribution, which will satisfy an equation leading to higher regularity.
  
  Let us denote $F_t(x)= \int_x^\infty u_t(y)dy$ and write the equation for $F$. It comes
  \begin{equation}
 \label{eq:limitPDE-F}
 \frac{\partial}{\partial t} F_t= \frac{\partial^2}{\partial x^2}F_t-\chi \frac{\partial}{\partial x}A (F_t) + B(F_t),
\end{equation}
where $A(z):= \int_0^z\tilde a(s)ds= (z-1) \vee 0$ and $B(z):= \int_0^z\tilde b(s)ds= z-A(z)=z \wedge 1$. For the above equation, we use the following notion of weak solution:

\begin{definition}
\label{def:F-weak}
    A continuous mapping $F: t\in [0, \infty) \to F_t \in L^1_{loc}(\R)$ is a weak solution to \eqref{eq:limitPDE-F} if  for all $\varphi_t\in C_c^{1,2}([0, \infty) \times \R)$ we have
\begin{align}
\label{eq:weak-sol}
    \int \varphi_t(x) & F_t(x) dx - \int \varphi_0(x) F_0(x) dx= \int_\R \int_0^t  F_s(x) \frac{\partial}{\partial s}\varphi_s(x) \ ds \ dx \nonumber\\
    & + \int_\R \int_0^t \left\{ F_s(x) \partial_x^2 \varphi(s,x) + \chi A(F_s(x)) \partial_x \varphi_s(x) + B(F_s) \varphi_s(x)\right\} ds \ dx.
\end{align}
\end{definition}

Similarly, we give a notion of weak solution for Equation (\ref{eq:limitPDE}):

\begin{definition}
\label{def:u-weak}
    A continuous mapping $u: t\in [0, \infty) \to u_t \in L^1_{loc}(\R)\cap L^1(\R_+)$ is a weak solution to \eqref{eq:limitPDE-F} if  for all $\varphi_t\in C_c^{1,2}([0, \infty) \times \R)$ we have
\begin{align}
\label{eq:weak-sol_u}
    \int \varphi_t(x) & u_t(x) dx - \int \varphi_0(x) u_0(x) dx= \int_\R \int_0^t  \left\{u_s(x) \frac{\partial}{\partial s}\varphi_s(x) +u_s(x) \partial_x^2 \varphi(s,x)\right\}\ ds \ dx \nonumber\\
    & + \int_\R \int_0^t \left\{  \chi  \tilde{a}\left(\int_x^{+\infty}u_s(y)dy\right)u_s  \partial_x \varphi_s(x) + \tilde{b}\left(\int_x^{+\infty}u_s(y)dy\right)u_s \varphi_s(x)\right\} ds \ dx.
\end{align}
\end{definition}

Let us note here that $u_t \in L^1_{loc}(\R)\cap L^1(\R_+)$ means that the function $u_t$ is integrable at $x=+\infty$, which is necessary in order to properly define $F_t(x)=\int_x^{+\infty}u_t(y)dy$, and thus be able to properly define the \textit{Go} and the \textit{Grow} regimes.

For a given $\varphi_t \in C_c^{1,2}([0, \infty) \times \R)$, we define $\phi_t(x)=\Hcal \ast \varphi_t =\int_{-\infty}^x\varphi_t(y)dy$. We denote $\tilde \Hcal = \Hcal(-\cdot)$ and recall that $\langle \Hcal \ast f, g \rangle = \langle f, \tilde \Hcal \ast g \rangle $, as well as $\langle \tilde \Hcal \ast f, g \rangle = \langle f,  \Hcal \ast g \rangle $.
        Then we define for $m\in \DD([0,T], \Mcal_F(\R))$ the operator:
        \begin{align}
        \nonumber
            &\Fcal_{t,\varphi}(m)=\left\langle \tilde\Hcal \ast m_t,\varphi_t\right\rangle-\left\langle \tilde\Hcal \ast m_0,\varphi_0\right\rangle-\int_0^t\left\langle \tilde\Hcal \ast m_s,\partial_s \varphi_s\right\rangle ds\\
            &-\int_0^t\left\langle \tilde\Hcal \ast m_s,\partial_x^2 \varphi_s\right\rangle ds-\int_0^t\left\langle \chi A(\tilde\Hcal \ast m_s),\partial_x\varphi_s\right\rangle ds-\int_0^t\left\langle B(\tilde\Hcal \ast m_s),\varphi_s\right\rangle ds 
            \label{eq-defF}
        \end{align}
The first main result of this paper is that the cumulative distribution of $(\Hcal \ast \mu_t^K)_{t\leq T}$ converges, along subsequences and in probability, to a weak solution to \eqref{eq:limitPDE-F}.

\begin{theorem}
\label{thm:mainresult}
    Let $T>0$  and  $(\mu_t^K)_{t\leq T}$, which satisfies Equation~(\ref{eq-sdemu}). Suppose 
    \begin{equation}
        \label{cond:initialdata}
        \EE\left[\left|\left\langle \mu^K_0, 1\right\rangle\right|^3\right]<+\infty
    \end{equation}
    %$\EE\left[|\langle \mu^K_0, 1\rangle\right]|<+\infty$ \textcolor{red}{+ convergence of the initial condition} 
    and that $\mu_0^K$ converges , as $K\to \infty$, in probability in $ \Mcal_F^w(\R)$ to some $\mu_0$.
    Denote $(Q^K)_{K\geq 1}=\mathcal L(\mu^K)_{K\geq 1}$. Then, 
    \begin{enumerate}
        \item The sequence $(Q^K)_{K\geq 1}$ is tight in $\Pcal(\DD([0,T], \Mcal_F(\R)))$ w.r.t weak topology.
        \item Consider an accumulation point $Q^\infty\in \Pcal(\DD([0,T], \Mcal_F(\R)))$ of the sequence $(Q^K)_{K\geq 1}$ and $\mu^\infty$ in the support of $Q^\infty$. Then $\Fcal_{t,\varphi}(\mu^\infty)=0$, where $\Fcal_{t,\varphi}$ is given by (\ref{eq-defF}), and $(\tilde{\mathcal{H}}\ast \mu_t^\infty)_{t\leq T}$ is a weak solution to Equation (\ref{eq:limitPDE-F}).
    \end{enumerate}
\end{theorem}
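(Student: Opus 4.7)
The plan follows the classical two-step scheme: first show tightness of $(Q^K)_{K\geq 1}$ in $\Pcal(\DD([0,T],\Mcal_F(\R)))$; then identify every subsequential weak limit as concentrated on paths $\mu^\infty$ satisfying $\Fcal_{t,\varphi}(\mu^\infty)=0$ for all admissible $(t,\varphi)$. In the spirit of \cite{jourdain2000}, the crucial step to bypass the discontinuity of $a(x,\nu)=\tilde a(\tilde\Hcal\ast\nu(x))$ and $b(x,\nu)=\tilde b(\tilde\Hcal\ast\nu(x))$ is to apply Proposition~\ref{prop:mu-charact} to a primitive $\phi_t=\Hcal\ast\varphi_t$, which via a discrete integration by parts at the level of the ranked particles reformulates the dynamics in terms of the \emph{Lipschitz} primitives $A,B$ of $\tilde a,\tilde b$ evaluated on the empirical CDF $\tilde\Hcal\ast\mu^K$. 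Passing to the limit then becomes a matter of continuity under weak convergence.

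\textbf{Tightness.} The total branching rate is $K$ and each branching adds one particle, so $N^K_t=K\langle\mu_t^K,1\rangle$ is dominated by a pure birth process of individual rate one; combined with Doob's inequality, assumption \eqref{cond:initialdata} yields $\sup_K\EE\bigl[\sup_{t\leq T}\langle\mu_t^K,1\rangle^p\bigr]<\infty$ for $p\in\{1,2\}$. Together with the boundedness of $a,b$ and Proposition~\ref{prop:mu-charact}, an Aldous--Rebolledo argument delivers tightness in $\DD([0,T],\R)$ of $t\mapsto\langle\mu_t^K,f\rangle$ for each $f$ in a convergence-determining countable subset of $C_b^2(\R)$. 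Compact containment (no escape of mass to infinity) follows from standard Brownian estimates against cutoff test functions $\psi_R(x)=\psi(|x|/R)$ together with the moment bound on the total mass. Jakubowski's criterion then concludes tightness of $(Q^K)$.

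\textbf{Identification.} Fix $\varphi\in C_c^{1,2}([0,T]\times\R)$ and set $\phi_t:=\Hcal\ast\varphi_t\in C_b^{1,2}$. Applying Proposition~\ref{prop:mu-charact} to $\phi_t$ and using $\langle\mu_t^K,\phi_t\rangle=\langle\tilde\Hcal\ast\mu_t^K,\varphi_t\rangle$ together with the Stieltjes identity $\int f\,d\mu_s^K=\int f'\,\tilde\Hcal\ast\mu_s^K\,dx$ for $f\in C_c^1(\R)$, the task reduces to rewriting the drift and branching contributions. Ordering the alive particles $X_s^{(1)}\geq\ldots\geq X_s^{(N_s^K)}$, the step CDF $\tilde\Hcal\ast\mu_s^K$ equals $k/K$ on $(X_s^{(k+1)},X_s^{(k)}]$; since $\tilde a(k/K)=\mathbbm{1}_{k>K}$ and $\tilde b(k/K)=\mathbbm{1}_{k\leq K}$, a direct Abel summation yields the \emph{exact} identities
\begin{align*}
\int_{\R}A\bigl(\tilde\Hcal\ast\mu_s^K(x)\bigr)\partial_x\varphi_s(x)\,dx &= \tfrac{1}{K}\sum_{k>K}\varphi_s(X_s^{(k)}) = \int a(x,\mu_s^K)\varphi_s(x)\,\mu_s^K(dx),\\
\int_{\R}B\bigl(\tilde\Hcal\ast\mu_s^K(x)\bigr)\varphi_s(x)\,dx &= \tfrac{1}{K}\sum_{k\leq K}\phi_s(X_s^{(k)}) = \int b(x,\mu_s^K)\phi_s(x)\,\mu_s^K(dx).
\end{align*}
Substituting into \eqref{eq-sdemu} and rearranging to match \eqref{eq-defF} gives
\[
\Fcal_{t,\varphi}(\mu^K) = M^{K,\phi,W}_t + M^{K,\phi,\Ncal}_t.
\]

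\textbf{Passing to the limit and main obstacle.} The quadratic variations \eqref{eqDefQuadVarW}--\eqref{eqDefQuadVarN} evaluated at $f=\phi$ are each dominated by $\tfrac{C(\varphi,T)}{K}\int_0^T\EE\langle\mu_s^K,1\rangle\,ds$, so Doob yields $\EE\bigl[\sup_{t\leq T}|\Fcal_{t,\varphi}(\mu^K)|^2\bigr]\to 0$. Since $\|\mu_t^K-\mu_{t-}^K\|_{\mathrm{TV}}\leq 1/K$, every subsequential limit $Q^\infty$ concentrates on $C([0,T],\Mcal_F(\R))$. On such continuous paths, $m\mapsto\Fcal_{t,\varphi}(m)$ is continuous in the weak topology: every linear term rewrites as $\langle m_s,\Hcal\ast\psi\rangle$ with $\Hcal\ast\psi\in C_b(\R)$, while for the nonlinear $A$- and $B$-terms one has $\tilde\Hcal\ast m_s^n(x)\to\tilde\Hcal\ast m_s(x)$ for Lebesgue-a.e.\ $x$ (atoms of $m_s$ being Lebesgue-null) and $A,B$ are bounded and Lipschitz, so dominated convergence applies in both $x$ and $s$. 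Skorokhod representation and continuous mapping then give $\Fcal_{t,\varphi}(\mu^\infty)=0$ a.s.\ for each fixed $(t,\varphi)$, and a countable dense family upgrades this to hold simultaneously for all $(t,\varphi)$. Finally $F_t:=\tilde\Hcal\ast\mu_t^\infty$ is bounded, monotone in $x$, hence in $L^1_{loc}(\R)$, and continuous in $t$, so the identity $\Fcal_{t,\varphi}(\mu^\infty)=0$ is exactly Definition~\ref{def:F-weak}. The principal obstacle throughout is the discontinuity of $a$ and $b$: weak convergence of $\mu^K$ does not control $\int a(x,\mu^K)\varphi\,d\mu^K$ in any direct way, and the entire argument hinges on the exact discrete identities above, which replace this discontinuous dependence by the Lipschitz dependence of $A,B$ on the step CDF $\tilde\Hcal\ast\mu^K$.
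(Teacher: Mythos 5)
Your proposal is correct and follows essentially the same route as the paper: moment/Aldous--Rebolledo tightness with cutoff control of mass at infinity, the exact discrete integration by parts turning the discontinuous coefficients $a,b$ into the Lipschitz primitives $A,B$ of the empirical CDF so that $\Fcal_{t,\varphi}(\mu^K)=M^{K,\phi,W}_t+M^{K,\phi,\Ncal}_t$, vanishing of the martingales in $L^2$, and continuity of $\Fcal_{t,\varphi}$ at continuous paths. The only (cosmetic) differences are your use of Jakubowski's criterion in place of the Roelly/M\'el\'eard--Roelly criteria and of Skorokhod representation plus continuous mapping in place of the paper's uniform-integrability argument for the final passage to $\EE\left[\left|\Fcal_{t,\varphi}(\mu^\infty)\right|\right]=0$.
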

The proof is presented in Section~\ref{sec-convergence} and it goes through a tightness-consistency argument. The interesting point is the identification of the limit, where we used the discrete integration by parts inspired by \cite{jourdain2000,jourdain2014}.

As a consequence of the latter theorem, we have that any function $F$ in the support of $\tilde{\mathcal{H}}\ast \mu_t^\infty$ is a weak solution of Equation (\ref{eq:weak-sol}) and satisfies $F\in L^\infty([0,T]\times \R)$ (see preamble of Section \ref{sec-uniqueness}). For this class of functions, we then establish the following regularity result for weak solutions of Equation (\ref{eq:weak-sol}):

\begin{proposition}
\label{prop:reg}
    [Regularity of $F$ solution to Equation (\ref{eq:weak-sol}) and $u:=-\partial_x F$ solution to Equation (\ref{eq:weak-sol_u})]
    Suppose that the initial datum $\mu_0$ is a deterministic function $\mu_0=u_0\in L^\infty(\R)\cap L^1(\R)$ and consider $F\in L^\infty([0,T]\times \R)$ a weak solution to Equation (\ref{eq:weak-sol}) with initial datum $F_0(x)=\int_x^{\infty}u_0(y)dy$. Then,
    \begin{align*}
        F\in C([0,T], W^{1,\infty}(\R)).
    \end{align*}
    Moreover, $u:=-\partial_x F$ is a solution to Equation (\ref{eq:weak-sol_u}) and satisfies for all $p\in [0,\infty]$:
    \begin{align*}
        u\in C([0,T], L^p(\R)).
    \end{align*}
\end{proposition}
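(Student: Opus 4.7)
The plan is to work with the mild (Duhamel) form of \eqref{eq:weak-sol} and run a Picard iteration. Testing the weak formulation against $\varphi_s(x) = \eta_\epsilon(s) G_{t-s}(x-y)$ with a suitable smooth cutoff in $s$ and sending $\epsilon \to 0$ yields the mild representation
\begin{equation*}
F_t = G_t \ast F_0 - \chi \int_0^t \partial_x G_{t-s} \ast A(F_s)\,ds + \int_0^t G_{t-s}\ast B(F_s)\,ds,
\end{equation*}
where $G_t$ denotes the heat kernel on $\R$. The $L^\infty$-bound on $F$, combined with the fact that $A$ and $B$ are $1$-Lipschitz with $A(0)=B(0)=0$, guarantees that each convolution is well defined.

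On a short time interval $[0,T_\epsilon]$ I would define a map $\Phi$ on $L^\infty([0,T_\epsilon]\times\R)$ by the right-hand side of the display above. Using $\|\partial_x G_{t-s}\|_{L^1}\leq C/\sqrt{t-s}$ and the Lipschitz property of $A,B$, one verifies that $\Phi$ is a contraction on $L^\infty$ as soon as $C\chi\sqrt{T_\epsilon}+T_\epsilon<1$, so that its $L^\infty$ fixed point is unique and must coincide with the given $F$. To propagate $W^{1,\infty}$ regularity, I would start the iteration from $F^{(0)}\equiv F_0\in W^{1,\infty}$ (since $\partial_x F_0=-u_0\in L^\infty$) and exploit the identity
\begin{equation*}
\partial_x G_{t-s}\ast A(F^{(n)}_s) = G_{t-s}\ast \partial_x A(F^{(n)}_s),
\end{equation*}
valid as long as the current iterate lies in $W^{1,\infty}$. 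Combined with the chain-rule bound $\|\partial_x A(F)\|_\infty = \|\tilde a(F)\partial_x F\|_\infty\leq \|\partial_x F\|_\infty$ and its analogue for $B$, this yields
\begin{equation*}
\|\partial_x F^{(n+1)}_t\|_\infty \leq \|u_0\|_\infty + (C\chi\sqrt{T_\epsilon}+T_\epsilon)\sup_{s\leq T_\epsilon}\|\partial_x F^{(n)}_s\|_\infty,
\end{equation*}
so that the iterates enjoy a uniform $W^{1,\infty}$-bound. Passing to the $L^\infty$-limit and invoking weak-$\ast$ compactness of bounded sets in $W^{1,\infty}$ gives $F\in L^\infty([0,T_\epsilon],W^{1,\infty})$; iterating over adjacent intervals of equal length extends this to $[0,T]$.

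Once $F\in L^\infty([0,T],W^{1,\infty})$ is established, one checks that $u:=-\partial_x F$ satisfies the weak equation \eqref{eq:weak-sol_u} by integrating by parts in \eqref{eq:weak-sol} with the test function $-\partial_x\varphi$. The associated mild formulation
\begin{equation*}
u_t = G_t\ast u_0 - \chi\int_0^t \partial_x G_{t-s}\ast(u_s\tilde a(F_s))\,ds + \int_0^t G_{t-s}\ast(u_s\tilde b(F_s))\,ds,
\end{equation*}
together with $|\tilde a|,|\tilde b|\leq 1$ and $\|\partial_x G_{t-s}\|_{L^1}\leq C/\sqrt{t-s}$, yields for $p\in\{1,\infty\}$ the inequality
\begin{equation*}
\|u_t\|_{L^p} \leq \|u_0\|_{L^p} + C\chi\int_0^t \frac{\|u_s\|_{L^p}}{\sqrt{t-s}}\,ds + \int_0^t\|u_s\|_{L^p}\,ds,
\end{equation*}
from which a singular Gronwall argument gives uniform-in-$t$ bounds on both $\|u_t\|_{L^1}$ and $\|u_t\|_{L^\infty}$. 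Interpolation then provides the bound in $L^p$ for every $p\in[1,\infty]$, and the continuity in $t$ in the corresponding spaces follows from the continuity of each term in the mild formulae.

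I expect the main obstacle to be the propagation of the $W^{1,\infty}$ bound along the iteration: the discontinuity of $\tilde a$ and $\tilde b$ prevents a direct Schauder-type argument, and the derivation above relies crucially on the Lipschitz character of $A$ and $B$ through the identity $\partial_x A(F)=\tilde a(F)\partial_x F$, which is meaningful only once the iterate is already in $W^{1,\infty}$. A secondary technical point is the rigorous derivation of the mild formulation from the weak one, which requires approximating the heat kernel by compactly supported smooth test functions and justifying the limit via dominated convergence.
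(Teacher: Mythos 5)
Your proposal reaches the same mild formulation as the paper's Lemma~\ref{lem:rep-F} (the paper extends the admissible test functions to $L^\infty([0,T],W^{2,1}(\R))$ by density and tests with $\phi_s=e^{(t-s)\partial_x^2}\varphi$, which is the same device as your truncated heat-kernel test function), but your core regularity step is genuinely different. The paper works directly with the given solution $F$ and bootstraps through H\"older spaces: from $F\in L^\infty$ it deduces $F\in L^\infty([0,T],C^{0,\alpha}(\R))$ via the interpolation bound $\|\partial_x e^{t\partial_x^2}f\|_{C^{0,\alpha}}\leq C(t^{-1/2}+t^{-(1+\alpha)/2})\|f\|_\infty$; then, since $A$ is Lipschitz, $A(F)\in L^\infty_tC^{0,\alpha}_x$, and the companion bound $\|\partial_x^2e^{t\partial_x^2}f\|_\infty\leq Ct^{-1+\alpha/2}\|f\|_{C^{0,\alpha}}$ renders the singularity in the advection term integrable, giving $\partial_xF\in L^\infty$ without any iteration. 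You instead set up a Picard scheme, identify $F$ with the limit of the iterates via the contraction property, propagate the $W^{1,\infty}$ bound along the iteration through $\partial_xG_{t-s}\ast A(F^{(n)}_s)=G_{t-s}\ast\partial_xA(F^{(n)}_s)$ and the Lipschitz chain rule, and transfer the bound to $F$ by weak-$\ast$ lower semicontinuity. Your route avoids the interpolation machinery entirely and exploits only $\|\partial_xG_t\|_{L^1}\leq C/\sqrt t$; the paper's avoids introducing the iteration and the identification-of-the-limit step. Both are legitimate, and the subinterval-gluing works because the contraction constant depends only on $\chi$ and the interval length.

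One step you should tighten is the $L^1$ bound on $u$. The singular Gronwall inequality you invoke for $p=1$ presupposes that $s\mapsto\|u_s\|_{L^1}$ is finite and locally integrable, which is not known at that stage: only $u\in L^\infty([0,T],L^\infty(\R))$ has been established, and an $L^\infty$ function need not be integrable. The fix is already implicit in your scheme: run the same recursion on the iterates $u^{(n)}=-\partial_xF^{(n)}$, which start from $u^{(0)}=u_0\in L^1(\R)$ and satisfy $\|u^{(n+1)}_t\|_1\leq\|u_0\|_1+(C\chi\sqrt{T_\epsilon}+T_\epsilon)\sup_{s\leq T_\epsilon}\|u^{(n)}_s\|_1$, then pass the uniform bound to the limit by Fatou or lower semicontinuity. (The paper instead obtains $\|u_t\|_1=\|F_t\|_\infty$ from the sign condition $u\geq0$, which your argument does not need.) A final word on the claimed time continuity would also be welcome: as in the paper, it rests on the strong continuity of the heat semigroup on the relevant space together with the $\sqrt h$-type bounds on the tail integrals over $[t,t+h]$, rather than following automatically from the mild formula.
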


With this regularity result, we are able to establish a uniqueness theorem, using some additional assumptions on the initial datum. Thus, under these assumptions, Theorem \ref{thm:mainresult} provides not just convergence along sub-sequences, but also actual convergence to a unique function:
\begin{theorem}
\label{thm:uniqueness}
    [Uniqueness of Weak Solutions]
    Suppose that the initial condition $\mu_0$ is a deterministic function $\mu_0=u_0$ and satisfies the following conditions:
    \begin{enumerate}
        \item[i)] $u_0 \in L^1(\R)\cap L^\infty(\R)$
        \item[ii)] $u_0 \geq 0$
        \item[iii)] there exists $\bar{x}(0)\in \R$ such that $F_0(\bar{x}(0))=\int_{\bar{x}(0)}^{+\infty}u_0(x) dx = 1$
        \item[iv)] there exists $\eta >0$ and $\underline{u}>0$ such that for all $x\in (\bar{x}(0)-\eta, \bar{x}(0)+ \eta)$, $u_0(x)\geq \underline{u}$
    \end{enumerate}
    Then, a weak solution $F\in L^\infty([0,T]\times \R)$ of Equation (\ref{eq:limitPDE-F}) satisfying the initial condition $F_0(x)=\int_x^{\infty}u_0(y)dy$  is unique.
\end{theorem}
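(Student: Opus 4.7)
The plan is to reduce the uniqueness question for the nonlinear equation \eqref{eq:limitPDE-F} to a contraction argument on the curve $t \mapsto \bar{x}(t)$ marking the threshold between the \textit{Go} ($F>1$) and \textit{Grow} ($F<1$) regimes. By Proposition~\ref{prop:reg}, any weak solution $F\in L^{\infty}([0,T]\times\R)$ with the prescribed initial datum already lies in $C([0,T],W^{1,\infty}(\R))$ with $u:=-\partial_x F \in C([0,T],L^p(\R))$ for every $p$; in particular $u\geq 0$ (via a standard regularisation/comparison argument at the level of the density equation), so $F(t,\cdot)$ is nonincreasing in $x$. Together with hypotheses iii)--iv) this gives a uniquely defined $\bar{x}(t)\in\R$ with $F(t,\bar{x}(t))=1$ at least for small $t$, and a strict lower bound $u_0\geq \underline{u}$ on a window around $\bar{x}(0)$.

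\textbf{Persistence of the positivity window.} I would first establish $T^{\ast}>0$ and $\tilde{u},\tilde{\eta}>0$ such that $u(t,\cdot)\geq \tilde{u}$ on $(\bar{x}(t)-\tilde{\eta},\bar{x}(t)+\tilde{\eta})$ for all $t\in [0,T^{\ast}]$. On each side of the threshold the equation for $u$ is linear, namely $\partial_t u=\partial_{xx}u-\chi\partial_x u$ on $\{F>1\}$ and $\partial_t u=\partial_{xx}u+u$ on $\{F<1\}$; both operators admit explicit positive subsolutions (shifted/scaled Gaussians), which can be matched across the moving interface using the parabolic maximum principle to yield a uniform quantitative lower bound. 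A direct consequence is Lipschitz continuity of $\bar{x}(\cdot)$, derived from the implicit relation $F(t,\bar{x}(t))=1$ together with the pointwise lower bound on $|\partial_x F|$ at the threshold.

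\textbf{Fixed-point reformulation and contraction.} For any Lipschitz curve $\sigma:[0,T^{\ast}]\to\R$ with $\sigma(0)=\bar{x}(0)$ close enough to $\bar{x}$, I define $F^{\sigma}$ as the unique $C([0,T^{\ast}],W^{1,\infty}(\R))$ solution of the linear problem
\begin{equation*}
\partial_t F^{\sigma}=\partial_{xx}F^{\sigma}-\chi\mathbbm{1}_{x<\sigma(t)}\partial_x F^{\sigma}+\mathbbm{1}_{x<\sigma(t)}+F^{\sigma}\mathbbm{1}_{x\geq\sigma(t)},\qquad F^{\sigma}_0=F_0,
\end{equation*}
obtained by Duhamel's formula and heat-kernel estimates, and let $\Phi(\sigma)(t)$ be the unique level $\{F^{\sigma}(t,\cdot)=1\}$. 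Any weak solution of \eqref{eq:limitPDE-F} provides a fixed point $\sigma=\bar{x}$ of $\Phi$. For two admissible curves $\sigma^1,\sigma^2$, the difference $F^1-F^2$ solves a linear parabolic equation whose source is concentrated on the symmetric difference of the two regimes, giving
\begin{equation*}
\|F^1(t,\cdot)-F^2(t,\cdot)\|_{L^{\infty}}\leq C\int_0^t\bigl(1+(t-s)^{-1/2}\bigr)\,|\sigma^1(s)-\sigma^2(s)|\,ds,
\end{equation*}
while the uniform lower bound $-\partial_x F^{\sigma^i}\geq \tilde u/2$ near the threshold yields the inverse Lipschitz estimate
\begin{equation*}
|\Phi(\sigma^1)(t)-\Phi(\sigma^2)(t)|\leq \tfrac{2}{\tilde u}\,\|F^1(t,\cdot)-F^2(t,\cdot)\|_{L^{\infty}}.
\end{equation*}
Chaining these two inequalities and taking $T^{\ast}$ small makes $\Phi$ a strict contraction on a ball of $C([0,T^{\ast}])$, so uniqueness holds on $[0,T^{\ast}]$; one then iterates, the hypotheses iii)--iv) being inherited at time $T^{\ast}$ thanks to the persistence in Step~2, to cover the whole interval $[0,T]$.

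\textbf{Main obstacle.} The delicate step is the second one: ensuring that the positivity window for $u$ travels with the threshold on a time scale controlled only by the initial data, rather than shrinking to zero. Heuristically, on the \textit{Grow} side the reaction $+u$ amplifies $u$ exponentially and on the \textit{Go} side the drift $+\chi$ transports mass towards $\bar{x}(t)$, so a two-sided lower bound should persist; turning this into a quantitative subsolution that bridges the discontinuous drift across the moving interface $\sigma(t)$ is where the real work lies. Without a uniform $\tilde u$ the inverse-level estimate in Step~3 degenerates and the iteration cannot be closed on all of $[0,T]$; once this bound is in hand, the remaining ingredients (linear well-posedness, heat-kernel bounds, time iteration) are classical.
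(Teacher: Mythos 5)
Your proposal follows essentially the same route as the paper: a two-way Lipschitz estimate between the threshold curve and the solution (forward, via the fact that the symmetric difference of the two \textit{Go}/\textit{Grow} regimes has measure $|\bar{x}_1(s)-\bar{x}_2(s)|$ combined with Duhamel and heat-kernel bounds; inverse, via the lower bound on $u$ near the threshold), closed as a contraction for $T$ small and then iterated in time — the paper merely compares two solutions directly in $\|u_{1,t}-u_{2,t}\|_{L^1}$ rather than setting up a fixed-point map $\Phi$ on curves, which is a cosmetic difference. The one point where you overshoot is the step you single out as the main obstacle: the paper never needs the positivity window to travel with the interface via subsolutions matched across the moving discontinuity — it only observes that, by continuity of $t\mapsto F_{i,t}(\bar{x}(0)\pm\eta)$ (from $u_i\in C([0,T],L^1)$) and of $u_i$ in $C([0,T],L^\infty)$, for short time both thresholds remain inside the \emph{fixed} initial window on which $u_{i,t}\geq \underline{u}/2$, and the extension to all of $[0,T]$ is obtained by restarting the argument using the strong maximum principle ($u_t>0$ for $t>0$), so the quantitative interface-crossing subsolution you describe is not needed.
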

Let us briefly comment on the additional assumptions iii) and iv) on the initial datum, which are specific to Theorem \ref{thm:uniqueness}. Assumption iii) describes the case, where initially some particles are in the \textit{Go} regime, or equivalently not all the particles are in the \textit{Grow} regime. The finite population analog ($K<+\infty$) would be that the initial number of particles is higher than $K$. Assumption iv) states that in a an open set containing the threshold $\bar{x}(0)$ (\textit{i.e.} the large population equivalent of the position of the $K$-th particle) $u_0$ is bounded below by a positive constant.

In Subsection \ref{sec-uniqueness-reg}, Proposition \ref{prop:reg} is proven by showing first that a weak solution $F\in L^\infty([0,T]\times \R)$ of Equation (\ref{eq:limitPDE-F}) is a mild solution that can be represented through Duhamel's Formula. Then, using standard estimates on the heat operator $e^{t\partial_x^2}$ and given the hypothesis $u_0 \in L^1(\R)\cap L^\infty(\R)$ and $u_0 \geq 0$, we show that $F\in C([0,T],W^{1,\infty}(\R))$. From this we are able to deduce a representation formula for $u$ and that $u\in C([0,T],L^p(\R))$ for all $p \in [1,\infty]$. 

Then in Subsection \ref{sec-uniqueness-unique}, under the additional assumptions on the initial condition $u_0$, listed in Theorem \ref{thm:uniqueness}, we show uniqueness of the weak solution. We start by using the regularity result of Proposition \ref{prop:reg} on $u$ to show that on a small time interval $t\in [0,T]$, $u_t$ remains bounded below by a positive constant in the neighborhood of the threshold $\bar{x}(t)$ (\textit{i.e.} $F_t(\bar{x}(t))=1$) at least locally in time. By integrating two solutions $u_{1,t},u_{2,t}$ over the spatial set, where $u_{1,t}$ and $u_{2,t}$ are not in the same (\textit{Go} or \textit{Grow}) regime, we are able to apply a contraction principle, leading to uniqueness locally in time, that is then through an elementary argument extended to all time.

\subsection{Speed of Propagation and Ancestral Lineages}
\label{subsec:numericalResPres}

The model investigated in this work leads to a deterministic system in the large population limit as $K\to+\infty$, a setting that has been explored in previous studies \cite{demircigil2022,demircigilhenderson}. Its strength lies in its elementary nature, which allows for a rigorous analysis of various spreading phenomena, while still exhibiting a rich and diverse range of behaviors, particularly influenced by the parameter $\chi$.

We start by recalling some of the properties of the deterministic limit model, established in \cite{demircigil2022,demircigilhenderson}, including the traveling waves and their speeds, the asymptotic spreading of the threshold $\bar{x}(t)$ (\textit{i.e.} $F_t(\bar{x}(t))=1$) and the classification of the traveling waves as either pushed or pulled through the methodology of neutral fractions \cite{garnier2012,roques2012}: In the case $\chi> 1$, the traveling wave is pushed, whilst in the case $\chi \leq 1$, the wave is pulled.

\begin{figure}[t]
\begin{center}
\includegraphics[width=0.5\linewidth]{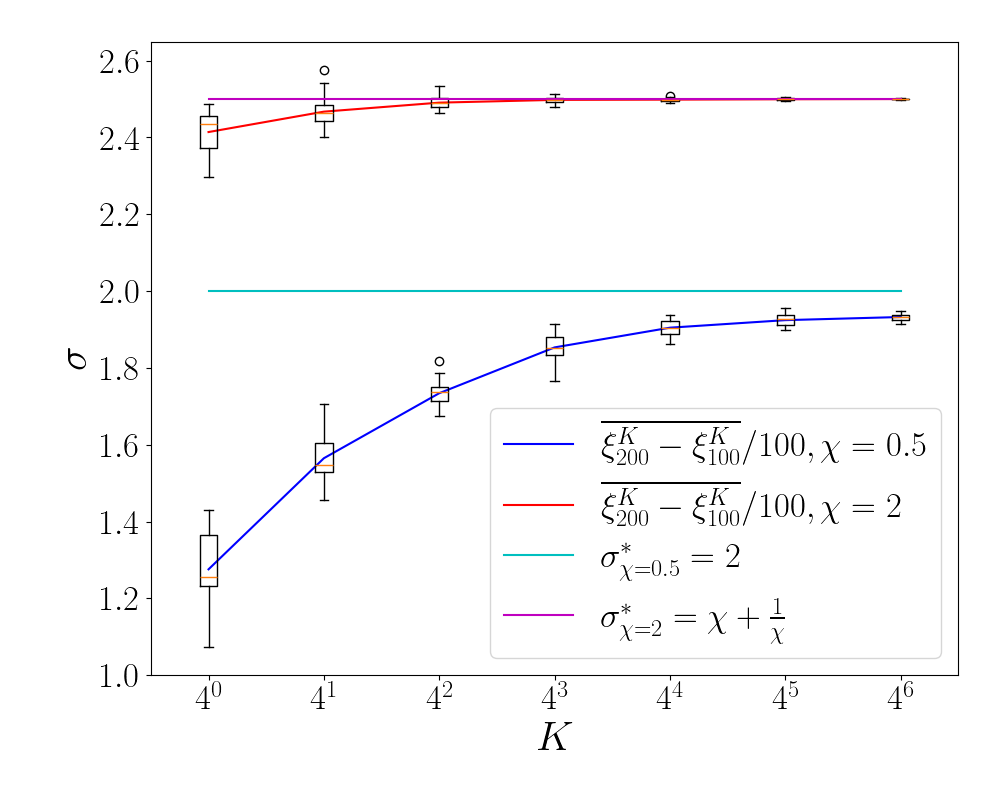}
\caption{The average velocity of the $K$-th particle over the interval $[100,200]$, \textit{i.e.} $\frac{\xi^K_{200}-\xi^K_{100}}{100}$, where $\xi^K_t=H_K(\mu^K_t)$ is the position of the $K$-th particle at time $t$, serves as an estimator of the expansion speed of the population. As $K$ increases, in average this estimator gets closer and closer to the deterministic traveling wave speed $\sigma^*=\left\{\begin{array}{ll}
    \chi+\frac{1}{\chi} & \text{ if }\chi>1 \\
    2 & \text{ if }\chi\leq 1
\end{array} \right.$ (see Figure \ref{fig:speed} for more details).}
\label{fig:intro-speed}
\end{center}
\end{figure}

\begin{figure}[b]
\begin{center}
\begin{subfigure}{0.5\linewidth}
  \centering
\includegraphics[width=\linewidth]{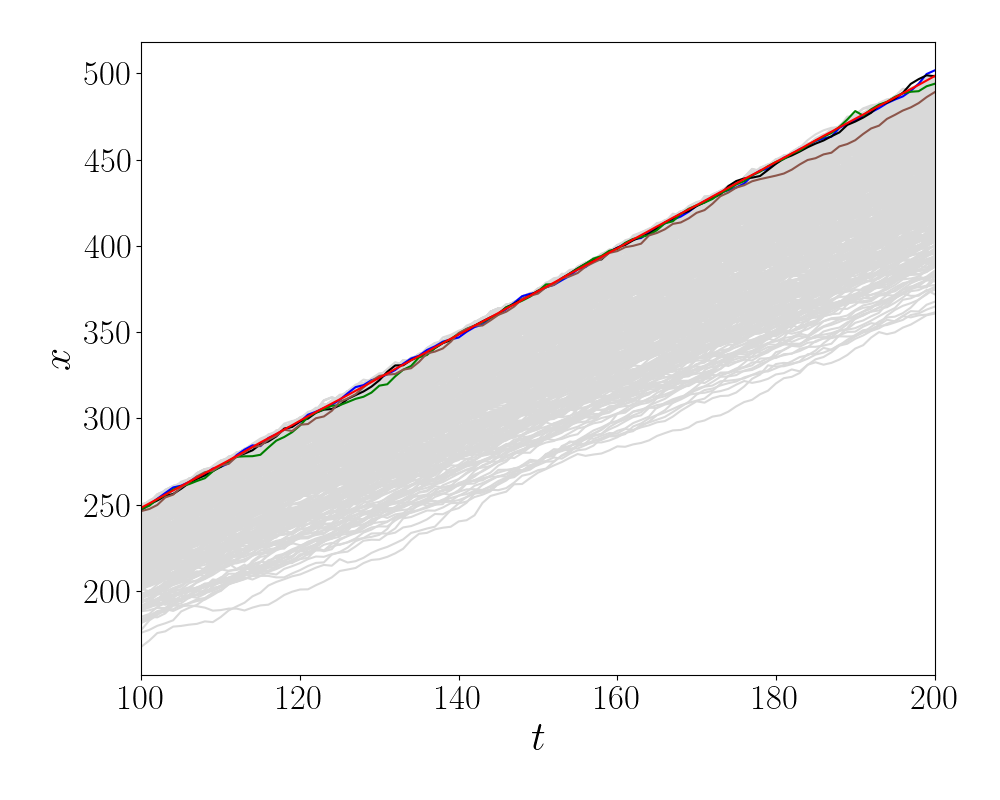}
  \caption{}
  \label{fig:intro-traj:sfig1}
\end{subfigure}%
\begin{subfigure}{7.5cm}
  \centering
\includegraphics[width=\linewidth]{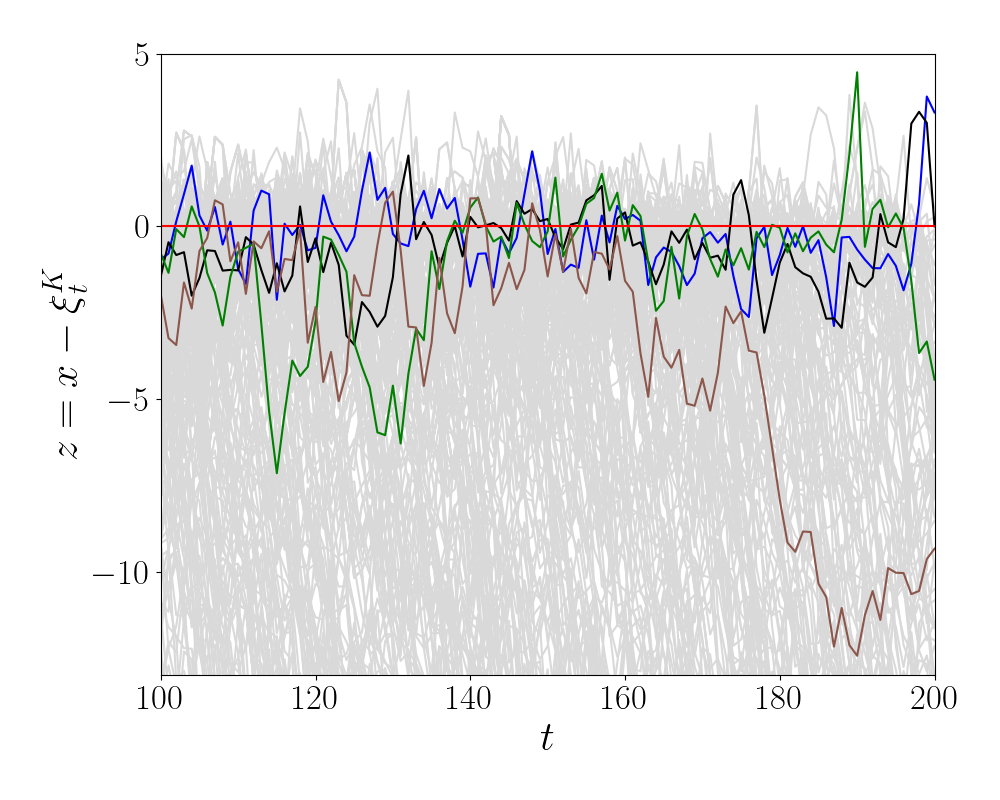}
  \caption{}
  \label{fig:intro-traj:sfig2}
\end{subfigure}
\caption{Graphical representation of the particle trajectories in (a) the stationary frame $(t,x)$, and (b) the moving frame $(t,z)=(t,x-\xi^K_t)$, where $\xi^K_t$ is the position of the $K$-th particle at time $t$. The red curve represents the position of the $K$-th particle and in (a) we observe a linear evolution of its position. Four more trajectories have been arbitrarily highlighted for illustration purposes and represent the ancestral lineage that evolves backwards in time, \textit{i.e.} that should be read from right to left (see Figure \ref{fig:trajectories} for more details). }
\label{fig:intro-traj}
\end{center}
\end{figure}

The goal of this section is, then, to numerically explore analogous properties of the given stochastic model. These numerical investigations lead us to propose two open problems, which we leave for future work. The first, a direct consequence of our numerical investigations, concerns the emergence of traveling waves in our model. The second one, which focuses on ancestral lineages, is inspired by recent literature \cite{calvez2022,forien2022} and supported by our simulations. What sets our application of the ancestral lineage methodology apart is that we apply it to a model exhibiting a transition from pulled to pushed waves.

First, we start by simulating the process for different values of $K$ and show that the population spreads linearly in the sense that in average the position of the particle of rank $K$ evolves linearly. As $K$ increases (see Figure \ref{fig:intro-speed}), the average velocity of the particle of rank $K$ gets closer and closer to the traveling wave speed $\sigma^*=\left\{\begin{array}{ll}
    \chi+\frac{1}{\chi} & \text{ if }\chi>1 \\
    2 & \text{ if }\chi\leq 1
\end{array} \right.$ for the deterministic model, with a slight discrepancy in the case $\chi\leq 1$ that may be explained through the so-called Bramson shift \cite{bramson1978,demircigilhenderson}. These observations lead us to conjecture the following behavior:

\begin{conjecture}
\label{conjecture:TW}
Let $\mu_0^K$ be an initial distribution that is compactly supported.
\begin{enumerate}
    \item Linear spreading in the finite population regime. There exists a constant propagation speed $\s^K$ such that in an appropriate convergence sense, $\lim_{t\to+\infty} \frac{\xi^K(t)}{t}=\s^K$, where $\xi^K_t=H_K(\mu^K_t)$ is the position of the $K$-th particle at time $t$.
    \item Local convergence to a traveling wave profile in the moving frame. Let $\Kcal\subset \R$ be a compact subset, then $\hat{\mu}^{K}_t:=\mu^K_t(\cdot -\xi^K(t))$ converges in law to a stationary distribution $\hat{\mu}^{K}_{\infty}$ on $\Kcal$.
    \item Convergence of the finite population propagation speed to the deterministic propagation speed.  $\lim_{K\to+\infty} \s^K = \s^*$, where $\sigma^*=\left\{\begin{array}{ll}
    \chi+\frac{1}{\chi} & \text{ if }\chi>1 \\
    2 & \text{ if }\chi\leq 1
\end{array} \right.$.
\end{enumerate}
\end{conjecture}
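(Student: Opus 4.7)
The plan is to split the conjecture into its three parts and address them in order. A unifying idea is to view the process in the frame moving with $\xi^K_t$, i.e.\ to study the shifted measure $\hat\mu^K_t := \mu^K_t(\cdot - \xi^K_t)$. By rank invariance under translation, $\hat\mu^K_t$ is itself a Markov process, with jumps in the moving frame whenever the identity of the rank-$K$ particle changes (either through a branching event at the front or through a trailing particle overtaking). In this frame, the $K$ rightmost atoms lie in $[0,\infty)$ and branch at rate $1$, while the trailing atoms lie in $(-\infty,0]$ and drift rightward at lab-frame speed $\chi$, giving a tangible restoring mechanism to be exploited.

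For parts (1) and (2), I would establish ergodicity of a suitable projection of $\hat\mu^K_t$ onto the relevant state near the front. The full shifted measure cannot itself be stationary, since the cumulative number of trailing particles grows linearly in $t$; however, trailing particles that have fallen sufficiently far to the left decouple from the future dynamics of the front, as they can no longer overtake it up to exponentially small Brownian fluctuations. I would therefore consider the configuration of the $K$ front particles together with the trailing density restricted to a window near the origin, and attempt positive recurrence by a Foster--Lyapunov argument: the rightward drift $\chi$ on the trailing block and the bounded branching rate at the front provide the needed contraction, while non-degeneracy of the Brownian transition kernels should yield irreducibility. Uniqueness of the invariant measure $\hat\mu^K_\infty$, combined with the Birkhoff ergodic theorem applied to the semimartingale decomposition of $\xi^K_t$, would give both $\xi^K_t/t \to \sigma^K$ in part (1) and convergence in distribution of the restriction $\hat\mu^K_t\big|_{\Kcal}$ in part (2).

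For part (3), I would try to couple the particle system with the PDE limit provided by Theorem~\ref{thm:mainresult} and Theorem~\ref{thm:uniqueness}. In the pushed regime $\chi > 1$, the deterministic wave of speed $\sigma^* = \chi + 1/\chi$ is linearly stable with a spectral gap~\cite{demircigil2022,demircigilhenderson}, so one may hope to exchange the limits $t\to\infty$ and $K\to\infty$ by quantifying propagation of chaos up to a diverging horizon $T(K)$ and then invoking PDE stability estimates; combined with part (1), this should force $\sigma^K \to \sigma^*$.

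The main obstacle is the pulled regime $\chi \le 1$. There the deterministic speed $\sigma^* = 2$ is selected by the linearisation at the leading edge, where only a vanishingly small fraction of particles is present at any given time. By analogy with $N$-BBM~\cite{maillard2016,berestycki2024} and with branching Brownian motion with selection, one expects a Bramson-type correction $\sigma^K = 2 - c(\log K)^{-2} + o((\log K)^{-2})$, which cannot be read off from the mean-field convergence of Theorem~\ref{thm:mainresult}: the relevant time scale is polynomial in $K$, far beyond the propagation-of-chaos window, and the bulk speed is driven by rare fluctuations of the frontmost particles. A refined spectral analysis of the linearised branching-drift operator at the tip of the wave, together with a precise description of these rare far-front particles, appears unavoidable, and this is where I expect the genuine difficulty to lie.
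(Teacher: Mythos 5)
The statement you are asked to prove is explicitly a \emph{conjecture} in the paper: the authors state that its proof ``is left to future work'' and support it only with numerical evidence (Section \ref{sec-numerics}, Figures \ref{fig:intro-speed}, \ref{fig:histogram} and \ref{fig:speed}). There is therefore no proof in the paper to compare against, and your text should be judged as a research programme rather than as a proof. As a programme it is sensible and well informed --- the moving-frame/renewal viewpoint, the Foster--Lyapunov route to positive recurrence for parts (1)--(2), and the identification of the pulled regime $\chi\le 1$ as the genuinely hard case (with an expected Brunet--Derrida-type correction $\sigma^K = 2 - c(\log K)^{-2}+o((\log K)^{-2})$, by analogy with $N$-BBM) are all consistent with how such results are established for related rank-based branching systems.

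However, every load-bearing step remains unexecuted, so this cannot be accepted as a proof of any of the three parts. Concretely: (i) the shifted process $\hat\mu^K_t$ lives on an infinite-dimensional state space with an unboundedly growing number of atoms, and you neither specify the windowed state you propose to work with nor verify that it is Markov (the trailing particles outside the window can in principle re-enter the top $K$, so the ``decoupling up to exponentially small fluctuations'' must be turned into an actual coupling or truncation estimate, not just asserted); (ii) no Lyapunov function is exhibited and no irreducibility/minorization condition is proved, so positive recurrence and uniqueness of the invariant measure are not established; (iii) for part (3) in the pushed case, the exchange of the limits $t\to\infty$ and $K\to\infty$ requires a quantitative propagation-of-chaos estimate valid on a time horizon $T(K)\to\infty$ together with a nonlinear stability result for the wave $u^{\sigma^*}$, neither of which is available from Theorems \ref{thm:mainresult} and \ref{thm:uniqueness} (which give only qualitative convergence on a fixed $[0,T]$); and (iv) you yourself concede that the pulled case is out of reach of the proposed methods. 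Your outline is a reasonable starting point for attacking the conjecture, but, like the paper, it leaves the statement unproved.
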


\begin{figure}[t]
\begin{center}
\begin{subfigure}{.5\linewidth}
  \centering
\includegraphics[width=\linewidth]{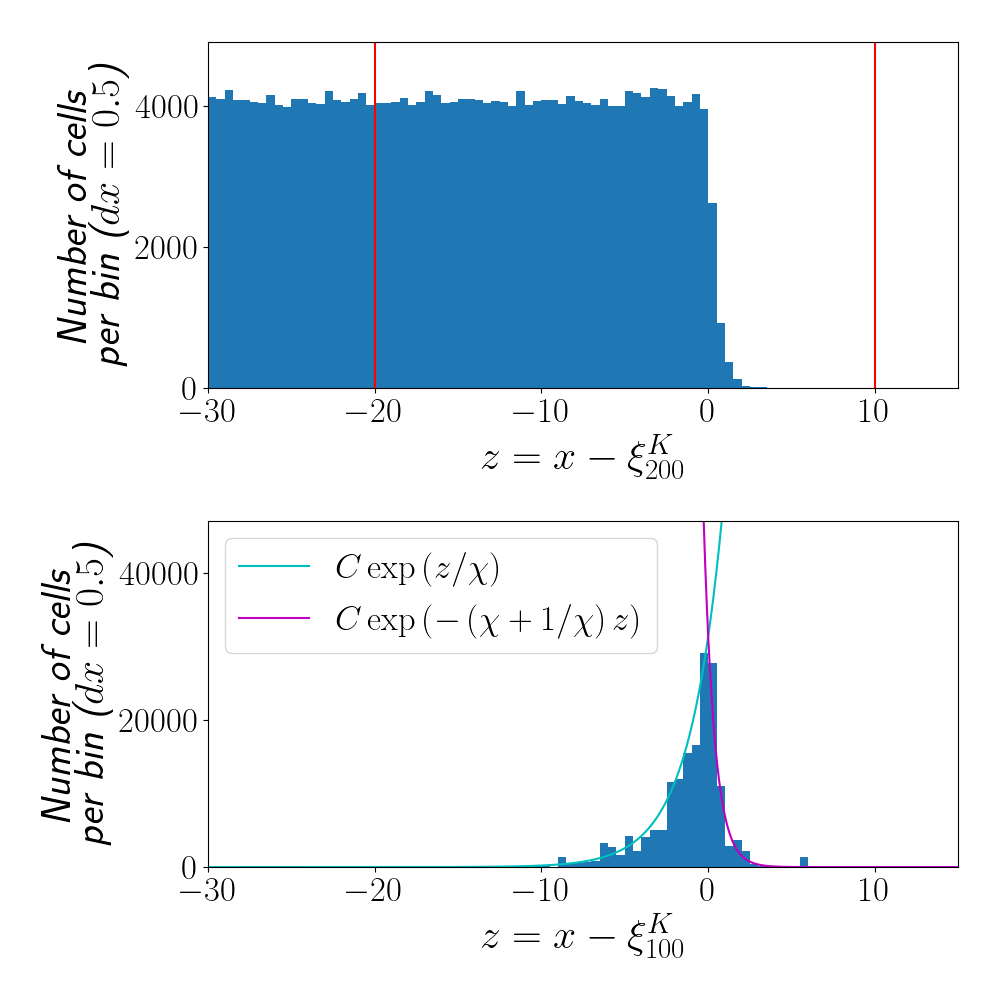}
  \caption{}
  \label{fig:intro-AL:sfig1}
\end{subfigure}%
\begin{subfigure}{0.5\linewidth}
  \centering
\includegraphics[width=\linewidth]{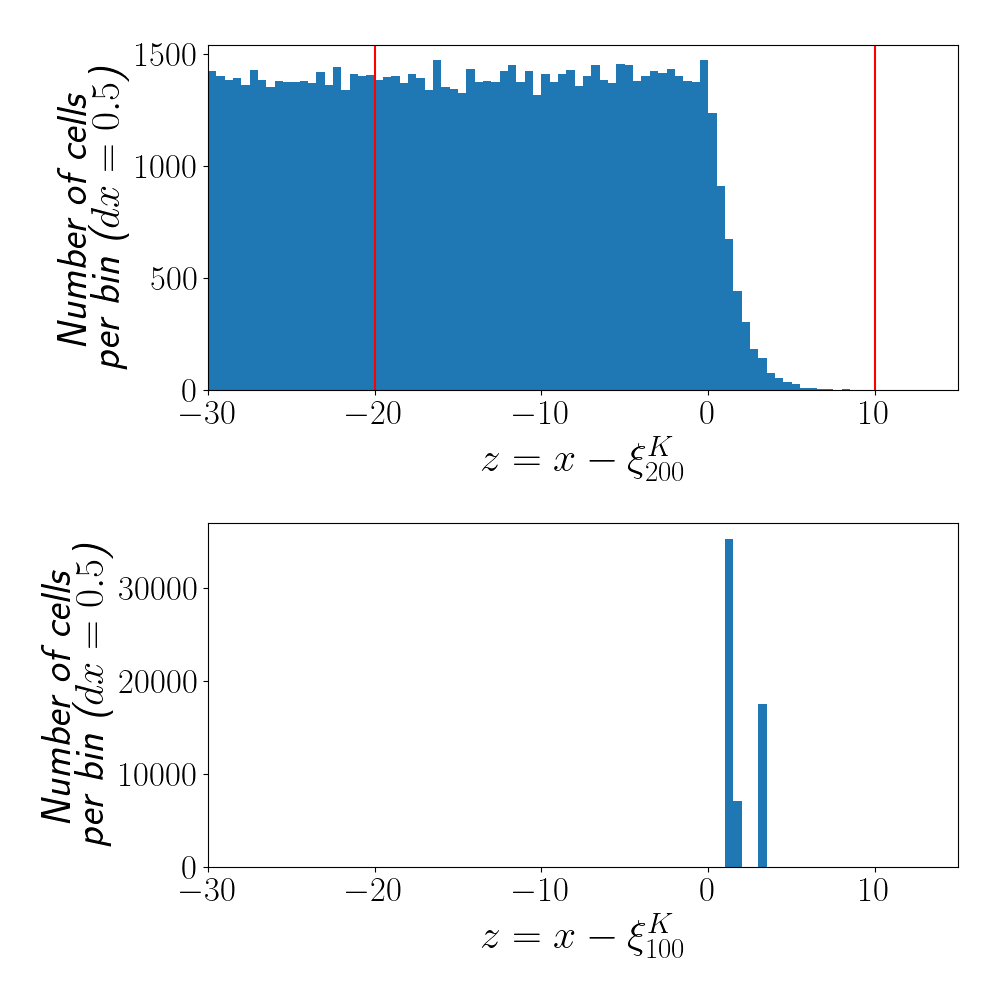}
  \caption{}
  \label{fig:intro-AL:sfig2}
\end{subfigure}
\caption{
Graphical representation of the ancestral lineage distribution. (a): The pushed case with $\chi=2$. (b): The pulled case with $\chi=\frac{1}{2}$. The top figure represents the histogram of $ \mu_T^K$ in the frame $z=x-\xi^K_T$ for $T=200$, $K=4096$. The distribution of $ \mu_T^K$ is qualitatively close to the deterministic traveling wave $u^{\s^*}$ (see also Figure \ref{fig:histogram}) The two red bars stake out the positions of the particles selected, whose ancestral lineages are tracked, which are all the particles whose position is in $[-20,10]$.  The bottom figure represents the position of the ancestral lineages in the frame $z=x-\xi^K_t$ at time $t=100$, \textit{i.e.} $s=100$, of the selected particles (which are between red bars in the top figure). The minimum of the cyan and the magenta curves represents the predicted equilibrium of Equation (\ref{introSDE}) in the case $\chi>1$. (see Figure \ref{fig:backward-distribution} for more details).}
\label{fig:intro-AL}
\end{center}
\end{figure}

Then, we investigate the pulled or pushed nature of the expansion in the stochastic model, following the methodology of ancestral lineages proposed independently in \cite{calvez2022,forien2022}. Whilst in these works the investigated waves were pushed, in the case of our model the methodology of ancestral lineages will be illustrated on a pulled to pushed transition.

The ancestral lineage refers to the evolution of a particle (or its unique ancestor alive at the considered moment in time) backwards in time. More precisely, given the position at time $t$ of a particle $X^i_t$ with $i\in V_t$ (the set of indices of particles alive at time $t$), we denote $Y^{K}_{s,t}$ its position or the position of its ancestor alive at time $ t-s$ for $s\geq 0$, \textit{i.e.} $Y^{K}_{s,t}=X^j_{t-s}$, where $j$ is the unique prefix of $i$ in $V_{t-s}$. $\hat{Y}^K_{s,t}$ denotes the position of the ancestral lineage in the moving frame that centers at $z=0$ the position of the $K$-th particle, \textit{i.e.} $\hat{Y}^K_{s,t}={Y}^K_{s,t}-\xi^K_{t-s}$. Ancestral lineage trajectories $\hat{Y}^K_{s,t}$ in the moving frame are highlighted in Figure \ref{fig:intro-traj:sfig2} and should be read from right to left. 

Then, we consider an initial distribution $\mu^K_0$ that is given by the deterministic traveling wave profile $u^{\s^*}$ associated with the traveling wave speed $\s^*$ (see \cite{demircigil2022,demircigilhenderson}). By definition of this traveling wave, $\mu^\infty_t$, the large population limit $K\to+\infty$, will merely be a shifted version of $u^{\s^*}$ for further time $t\geq 0$  (by a shift of $\s^* t$).

Following the results in \cite{calvez2022,forien2022}, we conjecture that, even though in the finite population case ($K<+\infty$), the ancestral lineage clearly depends on the precise position of all other particles, in the large population limit $K\to +\infty$, the ancestral lineage converges to a diffusing particle, whose evolution depends solely on its relative position within the wave, or more precisely:

\begin{conjecture}
\label{conjecture:AL}
Let the initial distribution $\mu_0^K$ be given by a deterministic traveling wave profile $u^{\s^*}$ and let $\hat{Y}^K_{s,t}$ be an ancestral lineage process in the frame of the traveling wave. Then, in the large population limit $K\to+\infty$,  $\hat{Y}^K_{s,t}$ converges in distribution to a process $\hat Y_{s,t}$ that is a solution to the following SDE:
\begin{align}
\label{introSDE}
    d\hat Y_{s,t} = \beta\left(\hat Y_{s,t}\right)ds + \sqrt{2}dW_s,
\end{align}
where $\beta(z)=\sigma^*-\chi \mathbbm{1}_{z<0}+2\frac{\partial_z u^{\s^*}}{u^{\s^*}}$.
\end{conjecture}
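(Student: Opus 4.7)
The plan is to prove this conjecture by identifying the ancestral lineage, in the $K\to\infty$ limit, as a time-reversed $h$-transformed forward spine of the rank-dependent branching diffusion, with $h$ the principal right eigenfunction of the linearized forward operator. The scheme combines a many-to-one spine identity, a propagation-of-chaos in the moving frame, and a Haussmann--Pardoux time reversal in which the $h$-corrections cancel exactly. All computations take place at the stationary profile $u^{\sigma^*}$ guaranteed by Conjecture \ref{conjecture:TW}.

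I would begin with the Hardy--Harris many-to-one identity for our rank-dependent branching diffusion,
\begin{equation*}
    \EE\Bigl[\sum_{u\in V_t^K}g(X^u_{\cdot\wedge t})\Bigr] = \EE\Bigl[\sum_{i\in V_0^K}\EE^i\Bigl[g(X_{\cdot\wedge t})\exp\Bigl(\int_0^t b(X_s,\mu^K_s)\,ds\Bigr)\Bigr]\Bigr],
\end{equation*}
where $X$ is a single tagged particle embedded in $\mu^K$ and following \eqref{eq:sde-positions}. Normalizing by the mean population size identifies the law of a uniformly sampled lineage (localized on a compact window as in Figure \ref{fig:intro-AL}) with a Feynman--Kac tilt of the tagged forward trajectory. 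In the stationary regime, the tilt is converted to an $h$-transform: writing $v(z)=\chi\mathbbm{1}_{z<0}-\sigma^*$ for the forward drift in the moving frame, $\mathcal L g = g'' - \partial_z(vg)$ for the associated Fokker--Planck operator, and $r(z)=\mathbbm{1}_{z>0}$ for the limiting birth rate, the stationary traveling-wave equation for $u^{\sigma^*}$ reads $\mathcal L u^{\sigma^*} + r\,u^{\sigma^*}=0$, so $u^{\sigma^*}$ is the principal left eigenfunction of $\mathcal L + r$ with eigenvalue $0$. Letting $h$ denote the corresponding principal right eigenfunction, the $h$-transformed forward spine has drift $v^{\mathrm{sp}}=v+2\partial_z\log h$ and invariant marginal $\pi^{\mathrm{sp}}= u^{\sigma^*}h/Z$. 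Identifying the limit of $\hat X^K_r := X_r - \xi^K_r$ as this $h$-transformed diffusion would follow by a direct extension of Theorem \ref{thm:mainresult} to the Feynman--Kac-weighted empirical measure, with tightness in Skorokhod space coming from the uniform bound $|v^{\mathrm{sp}}|\le C$ on compact sets.

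By definition $\hat Y^K_{s,t}=\hat X^K_{t-s}$, so applying the Haussmann--Pardoux time-reversal theorem to the stationary forward spine yields
\begin{equation*}
    d\hat Y_{s,t} = \bigl(-v^{\mathrm{sp}}(\hat Y_{s,t})+2\partial_z\log\pi^{\mathrm{sp}}(\hat Y_{s,t})\bigr)\,ds + \sqrt{2}\,dW_s.
\end{equation*}
The $h$-contributions cancel: $-v^{\mathrm{sp}}+2\partial_z\log(u^{\sigma^*}h) = -v - 2\partial_z\log h + 2\partial_z\log u^{\sigma^*} + 2\partial_z\log h = -v+2\partial_z\log u^{\sigma^*}$, and since $-v(z)=\sigma^*-\chi\mathbbm{1}_{z<0}$ the limiting backward drift equals $\beta(z)$, recovering \eqref{introSDE}.

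The main obstacles I anticipate are threefold. First, the entire scheme is conditional on Conjecture \ref{conjecture:TW}, whose rigorous proof (existence and asymptotic stability of a stochastic traveling wave in the moving frame for finite $K$) is itself substantial. Second, the use of Haussmann--Pardoux and of the $h$-transform requires $W^{1,p}_{\mathrm{loc}}$ regularity and strict positivity of both $u^{\sigma^*}$ and $h$, careful handling of the discontinuity of $v$ at $\{z=0\}$, and quantitative control of $\partial_z\log u^{\sigma^*}$ in the exponentially decaying tail $z\to+\infty$, where $u^{\sigma^*}$ decays at the pushed rate $e^{-\chi z}$. Third, the pulled regime $\chi\le 1$ lies outside this scheme because the wave moves at the minimal KPP speed $\sigma^*=2$, the principal eigenfunction $h$ ceases to be integrable, and the ancestral lineage escapes to $+\infty$ rather than settling at equilibrium; there the scheme must be replaced by a criticality-driven argument involving a Bramson logarithmic shift and a recentering on the leading particle instead of the $K$-th. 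The \emph{pushmi-pullyu} boundary $\chi=1$, sitting on the transition between recurrence and transience for $\hat Y$, is expected to be the most delicate.
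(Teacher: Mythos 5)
There is no proof in the paper to compare against: the statement you were given is explicitly a \emph{conjecture}, which the authors support only by numerical evidence and by the formal observation that the ancestral-lineage density should solve the Fokker--Planck equation \eqref{ancestPDE} dual to the neutral-fraction equation \eqref{eq:neutralfraction}. Your formal calculus is consistent with that heuristic: the identity $(\mathcal L+r)u^{\sigma^*}=0$ in the moving frame is correct, the cancellation $-v^{\mathrm{sp}}+2\partial_z\log(u^{\sigma^*}h)=-v+2\partial_z\log u^{\sigma^*}$ does reproduce $\beta$, and in the pushed case one can check explicitly that $h(z)\propto e^{z/\chi}$ and $\pi^{\mathrm{sp}}=u^{\sigma^*}h$ coincides (up to normalization) with the paper's $v_\infty$ in \eqref{def-vinf}. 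So as a derivation of \emph{why} the drift should be $\beta$, your scheme is sound and essentially the same heuristic the authors (and \cite{calvez2022,forien2022}) have in mind.

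It is, however, a program rather than a proof, and the steps you defer are exactly the open problems. First, the Hardy--Harris many-to-one identity you start from is not available off the shelf: here both the drift $a(X^i_s,\mu^K_s)$ and the branching rate $b(X^i_s,\mu^K_s)$ of a tagged particle depend on its rank in the whole configuration, so the ``single tagged particle $X$ embedded in $\mu^K$'' is not an autonomous Markov process and the Feynman--Kac tilt $\exp(\int_0^t b(X_s,\mu^K_s)\,ds)$ involves the random environment generated by all the other particles (which the tagged particle itself influences). Decoupling this --- a genealogical propagation of chaos for a rank-based, hence strongly correlated, system --- is the heart of the matter and cannot be dismissed as ``a direct extension of Theorem \ref{thm:mainresult}''; note moreover that Theorem \ref{thm:mainresult} requires finite-mass initial data, whereas $u^{\sigma^*}$ is not integrable (it is constant on $z\le 0$), a point the paper itself flags as ``the big mathematical challenge.'' Second, your reliance on the \emph{stationary} Haussmann--Pardoux reversal of the forward spine forces $\pi^{\mathrm{sp}}=u^{\sigma^*}h\in L^1$ and thereby excludes $\chi\le 1$, which the conjecture also covers; this restriction is an artifact of the route. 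A direct computation of the backward lineage kernel, $\hat q_s(y,dz)=u^{\sigma^*}(z)p^r_s(z,y)\,dz/u^{\sigma^*}(y)$ with $p^r$ the Feynman--Kac kernel of $\mathcal L+r$, shows that the backward density solves $\partial_s\rho=\partial_{zz}\rho-\partial_z(\beta\rho)$ for every $\chi$ and every $s\le t$, with no stationarity or integrability of $u^{\sigma^*}h$ needed and with $h$ never appearing; this removes your third obstacle at the formal level, though it does nothing for the first one, which remains the genuine gap.
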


As noted, the drift term $\beta$ is solely a function of the relative position within the wave and it can be computed explicitly, as the explicit wave profile $u^{\s^*}$ is known (see Theorem \ref{thm-tw} in Section \ref{sec-numerics}, originally from \cite{demircigil2022,demircigilhenderson}).

The conjectured behavior of the ancestral lineage is illustrated numerically by simulating $\hat{Y}^K_{s,t}$ for large but finite $K$. In addition, two different regimes of behaviors of $\hat{Y}^K_{s,t}$ depending on the parameter $\chi$ are observed: In the case $\chi>1$, as $s$ decreases, the sampled distribution of $\hat{Y}^K_{s,t}$ appears to converge to an equilibrium very close to the predicted equilibrium of Equation (\ref{introSDE}) (see Figure \ref{fig:intro-AL}); whilst in the case $\chi\leq 1$, the sampled distribution of $\hat{Y}^K_{s,t}$ drifts indefinitely to the right, as $s$ decreases. These different behaviors are consistent with Equation (\ref{introSDE}) as will be shown and indicate how the methodology of ancestral lineage may serve as an alternative interpretation for the dichotomy between \textit{pushed} and \textit{pulled} waves:
\begin{itemize}
    \item in the \textit{pulled} case $\chi\leq 1$, we observe numerically that the mean of the probability distribution $v(s)$ of the process $\hat{Y}_{s,t}$ drifts to progressively higher values of $z$, meaning that the ancestors at earlier times were located at the leading edge of the traveling wave and the ancestors
    correspond to a very small number of particles that "pull" the rest of the population (see Figure \ref{fig:intro-AL:sfig2}).
    \item in the \textit{pushed} case $\chi>1$, it is proven that the probability distribution $v(s)$ of the process $\hat{Y}_{s,t}$ given by Equation (\ref{introSDE}) converges to an equilibrium, describing the typical position of ancestors in the traveling wave (see Figure \ref{fig:intro-AL:sfig1}). The term \textit{pushed} expresses then the fact that the typical position of ancestors is located in the bulk of the wave. Thus, those particles are "pushing" the wave, as opposed to the previous case. 
    \item in the critical case $\chi=1$ (sometimes called the \textit{pushmi-pullyu} case \cite{an2021}), it is shown that the distribution's mean drifts indefinitely to the right, sharing thus a feature of pulled waves, whilst simultaneously keeping its mode centered at $z=0$, which is a feature of pushed waves.
\end{itemize}

\section{Construction and Preliminary Properties}
\label{sec:construction}

The process  $(\mu_t^K)_{t\geq 0}$ can be rigorously constructed as a Markov process solving an SDE driven by independent Brownian motions $(W^i)_{i \in \mathcal{U}}$ and a Poisson point measure $\mathcal{N}(ds, di, d\theta) $ on $\R^+ \times \mathcal{U} \times [0,1]$ with intensity $ds\times \left(\sum_{j\in \mathcal{U}} \delta_j(di) \right)\times d\theta$ independent of the Brownian motions and all independent of the initial condition $\mu_0^K$. Let us  give here an explicit algorithmic construction in terms of these objects. 
\begin{enumerate}
    \item Set $T_0=0$ and $m=0$. Denote $N_0^K= K\langle \mu^K_0,1\rangle$  and label the atoms of $\mu_0^K$ by $1, \dots , N_0^K$. Define accordingly $V_0^K$, the set of labels of individuals alive at initial time.
    \item The population follows the dynamics 
    \begin{equation}
    \label{eq:PS}
      X_t^i= X^i_{T_m}+ \chi \int_{T_m}^t a(X^i_s, \mu^K_s)ds + \sqrt{2}(W_t^i - W^i_{T_m}), \quad  i\in V_{T_m}.  
    \end{equation}
    until the next jump time $t>T_m$ of the measure $\mathcal{N}(dt, di, d\theta)$ such that the corresponding atom $(t,i, \theta)\in \R^+ \times \mathcal{U}\times [0,1]$ satisfies 
    $$i \in V^K_{T_m}, \quad \theta \leq  b(X^i_t, \mu^K_t). $$
    \item Set $T_{m+1}=t$, $N_{T_{m+1}}= N_{T_{m}}+1$. The population $\mu^K_{T_{m+1}}$ is updated in the following way: One atom in $X^i_{t^-}$ is replaced by 2 atoms in the same position denoted by $X^{i1}_t$ and $X^{i2}_t$, $i$ is removed from $V^K_{T^-_{m+1}}$, 
    and $i1$ and $i2$ are added,\textit{ i.e.} 
    $$V^K_{T_{m+1}}= (V^K_{T_{m}} \setminus \{i\}) \cup \{i1,i2\}.$$
    \item Set $m=m+1$ and go to step 2.
\end{enumerate}
\begin{lemma}
    Assume $\EE(\langle \mu_0^K,1\rangle)<\infty$. Then the process $(\mu_t^K)_{t\geq 0}$ given by \eqref{eq:muk} is well defined. 
\end{lemma}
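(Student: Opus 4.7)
The plan is to verify, step by step, that the algorithmic construction preceding the lemma defines a càdlàg $\Mcal_K$-valued Markov process on $[0,\infty)$, the key points being well-posedness of the inter-jump SDEs and non-accumulation of the jump times $(T_m)_m$.

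First, between successive jump times $T_m < T_{m+1}$, the label set $V_{T_m}^K$ is fixed and the positions $(X_t^i)_{i\in V_{T_m}^K}$ satisfy the finite-dimensional SDE \eqref{eq:PS}. The drift field is the bounded (by $\chi$) measurable function $(x^i)_i \mapsto \chi a\bigl(x^i,\tfrac{1}{K}\sum_j \delta_{x^j}\bigr)$, and the diffusion coefficient is the constant matrix $\sqrt 2\, I$. Existence and strong uniqueness on $[T_m, T_{m+1}]$ therefore follow from the Veretennikov result \cite{Veretennikov1982} invoked earlier in the text.

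Next, and this is the crucial step, I would prove $T_m \to \infty$ almost surely. By construction, only particles of rank at most $K$ contribute to the branching, and each such particle branches at rate $1$. Hence the total instantaneous birth rate at any time $t$ equals $\min(K, N_t^K) \leq K$, uniformly in the positions. Thinning the Poisson measure $\Ncal$ accordingly, the number of jumps on $[0,t]$ is stochastically dominated by a Poisson random variable of parameter $Kt$, and in particular is almost surely finite. Since the assumption $\EE[\langle \mu_0^K,1\rangle]<\infty$ forces $N_0^K = K\langle \mu_0^K,1\rangle$ to be a.s. finite, the induction on $m$ closes and the construction extends to the whole half-line.

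Finally, the update rule at each jump time $T_{m+1}$ is unambiguous: the selected Poisson atom picks a unique alive label $i \in V_{T_m}^K$ (uniqueness of labels being guaranteed by the Ulam--Harris--Neveu choice of $\mathcal{U}$), which is replaced by its two children $i1, i2$ sharing the mother's position, yielding a well-defined update of $\mu^K_{T_{m+1}}$. Concatenating the inter-jump solutions produces a càdlàg trajectory in $\Mcal_K$ adapted to the natural filtration of $\mu_0^K$, $(W^i)_{i \in \mathcal{U}}$ and $\Ncal$, and the strong Markov property follows from the independence of the driving noise on disjoint intervals. The main obstacle is the non-accumulation of jump times, which is resolved automatically here because the rank-based selection of branching particles caps the total birth rate by the constant $K$, independently of $N_t^K$; had the branching rates depended in an unbounded way on positions, an extra moment control would have been required.
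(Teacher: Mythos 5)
Your proof is correct and follows essentially the same route as the paper: inter-jump well-posedness via Veretennikov for the bounded-drift SDE, and non-explosion of $(T_m)_m$ from the fact that the rank-based selection caps the total birth rate at $K\wedge N_t^K\leq K$. The only cosmetic difference is that you conclude non-accumulation by thinning/domination of the jump counting process by a rate-$K$ Poisson process, whereas the paper computes the exponential laws $\mathcal{E}(K\wedge N_{T_m}^K)$ of the inter-jump waiting times explicitly; both rest on the same bound.
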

\begin{proof}
    Notice that $(T_m)_{m\geq 1}$ are stopping times w.r.t. the filtration generated by the Brownian motions  $(W^i)_{i \in \mathcal{U}}$ and $\Ncal(ds, di, d\theta)$ and the random variable $\mu_0^K$. Conditionally to $\F_{T_m}$, the drift of the particle system \eqref{eq:PS} is bounded and so $(X^i_t)_{i\in V_{T_m}}$ can be defined as the restriction on $(T_m, T_{m+1}]$ of the particle system with fixed number of particles solving the same equation on $(T_m, \infty)$. Hence, $\mu_t^K$ is well defined for $t\leq T_\infty$, where $T_\infty= \lim_{m\to \infty} T_m$. It remains to prove that $T_\infty= \infty$ a.s. This is trivial here, as for any $m\geq 1$, conditionally to  $\F_{T_m}$,  $T_{m+1}-T_m$ has exponential distribution with a fixed parameter $K\wedge N_{T_m}^K$. Indeed, taking for example $m=1$, we have that for $t>0$ $$\PP(T_1\geq t)= \PP (N^K_t= N^K_0).$$
    The latter means that point measure $\mathcal{N}$ did not produce any atom on $(0,t]$. Hence, $$\PP(T_1\geq t)= e^{-\int_0^t   \int_{\mathcal{U}} \int_0^1 \mathbbm{1}_{\left( i\in V_0^K  \right)  } \mathbbm{1}_{\left(\theta \leq b(X^i_{s^-}, \mu^K_{s^-})\right)} ds \times \left(\sum_{j\in \mathcal{U}} \delta_j(di) \right)\times d\theta}$$
$$= e^{-\int_0^t   \sum_{i\in V_0^K}  b(X^i_{s^-}, \mu^K_{s^-}) ds }.$$
Noticing that for any $s\geq 0$ when $N_0\geq K$ one has $\sum_{i\in V_0^K}   b(X^i_{s^-}, \mu^K_{s^-})  =K$ gives 
$$\PP(T_1\geq t)=  e^{-Kt}.$$
If the initial condition is such that $N_0<K$, then the waiting time of the first event has $\mathcal{E}(N_0)$ distribution. In the same way, the waiting time $T_2-T_1$ of the second event  has  $\mathcal{E}(N_0+1)$ distribution, etc. Until we reach the $(K-N_0+1)$-th event whose waiting time will have $\mathcal{E}(K)$ distribution and the same will be true for all events after that one. As starting from some time, the waiting time between the events admits $\mathcal{E}(K)$ distribution, it must be that $T_\infty=\infty$ a.s.
\end{proof}

Let us now derive the stochastic differential equation for $\mu^k$ and prove Proposition \ref{prop:mu-charact}. 
\begin{proof}[Proof of Proposition~\ref{prop:mu-charact}]
    For $\nu \in \Mcal_K$ and $\phi \in C_2(\R)$ we introduce the differential operator related to the dynamics of the positions of our cells, 
$$\Lcal_\nu \phi (x) := \chi \frac{\partial\phi}{\partial x} (x) a(x, \nu) + \frac{\partial^2\phi}{\partial x^2} (x). $$   Let $(t,x)\mapsto f_t(x)\in C^{1,2}_b(\R_+ \times \R)$. By construction, $t\mapsto \langle \mu^k_t, f_t\rangle$ is right continuous with limits on the left. Informally speaking, in order to derive the equation satisfied by  
$\langle \mu^k_t, f_t \rangle$ one decomposes it into increments according to the jumps $(T_m)_{m\geq 0}$ and applies It\^ o's formula on each interval. Around the jump times, one also picks up the terms in which  $f_t$ is evaluated in the admissible atoms produced by the measure $\mathcal{N}$.

Rigorously, we apply It\^ o's formula on $(X_s^i)_{i\in V^K_{T_m}}$ for $s \in [T_m, T_{m+1})$, then we sum over $i\in V^K_{T_m}$ and divide by $K$ and, finally, for a given $t>0$ we sum over all $m\in \N$ such that $\{t> T_m\}$. It comes 
\begin{align}
    &\langle \mu^K_t, f_t\rangle = \langle \mu^K_0, f_0\rangle +  \int_0^t \int \Big( \frac{\partial f_s}{\partial s}(x) + \Lcal_{\mu^K_s} f_s(x) \Big)   \mu^K_s (dx) ds \\
    & +  \frac{\sqrt{2}}{K}\int_0^t \sum_{ i\in V_s^K } \frac{\partial f_s}{\partial x}(X^i_s)  dW_s^i + \int_0^t \int_\mathcal{U} \int_0^1  \mathbbm{1}_{\left( i\in V_s^K, \theta \leq b(X^i_{s^-} , \mu^K_{s^-})\right)} \frac{f_s(X^i_{s^-})}{K} \Ncal(ds, di, d\theta). \nonumber
\end{align}
Compensating the Poisson measure, we may rewrite the above expression in order to obtain Equation (\ref{eq-sdemu}), which we recall:
 \begin{align*}
    &\langle \mu^K_t, f_t\rangle - \langle \mu^K_0, f_0\rangle -  \int_0^t \int \left( \frac{\partial f_s}{\partial s}(x) + \Lcal_{\mu^K_s} f_s(x) + b(x,\mu^K_{s})f_s(x) \right)   \mu^K_s (dx) ds  \\ 
    &= M^{K,f, W}_t + M^{K,f, \Ncal}_t,
\end{align*}
where $ M^{K,f, W}$ is a continuous local martingale with quadratic variation exactly as in~\eqref{eqDefQuadVarW}
and $ M^{K,f, \Ncal}$ is a pure jump local martingale with predictable quadratic variation as in~\eqref{eqDefQuadVarN}.
From here, it has become standard (see \textit{e.g.} \cite{FontbonaMeleard}) to characterize the law of $\mu^K$ using the cylindrical functions $F^f: \Mcal \to \R$ of the form $F^f(\nu)= F(\langle \nu, f\rangle)$ where $f\in C^2(\R)$ and $F\in C^2_b(\R)$. In fact, 
rewrite the zero order term above as 
\begin{align*}
    \int b(x,\mu^K_{s})f(s,x) \mu^K_s (dx)&= K \int \left\langle \frac{1}{K}\delta_x, f\right\rangle b(x,\mu^K_{s})\mu^K_s (dx)\\
    &= K \int \left(\left\langle \mu_s^K+ \frac{1}{K}\delta_x, f\right\rangle - \left\langle \mu_s^K, f\right\rangle  \right) b(x,\mu^K_{s})\mu^K_s (dx)
\end{align*}

By It\^o's formula the process 
$$F^f(\mu^K_t) - F^f(\mu^K_0) - \int_0^t L^K F^f(\mu_s^K) ds $$
is a local martingale where for $\nu \in \mathcal{M}^K$ we have the following expression for $L^K $:
\begin{align*}
    &L^K F^f(\nu) = K \int  b(x, \nu) \left(F^f\left(\nu + \frac{1}{K} \delta_x\right) - F^f(\nu)\right) \nu (dx) + \langle \nu, \Lcal_\nu f \rangle \left(F^f\right)'(\langle \nu, f \rangle )\\
    &+\frac{1}{K}\langle \nu,  (f')^2 \rangle  \left(F^f\right)''(\langle \nu, f \rangle ).
\end{align*}
\end{proof}

We end this section with the following statement.
\begin{lemma}
\label{lemma:massUniform}
    Let $t>0$ and $K\geq 1$. Then, assuming $\EE \left[\langle \mu_0^K, 1\rangle \right]< \infty$, 
    \begin{equation}
        \label{eq:masscontrol}
        \EE \left[\sup_{t\leq T}  \langle \mu_t^K, 1\rangle \right]\leq\EE \left[\langle \mu_0^K, 1\rangle \right] + T,
    \end{equation}
    and, assuming for $p>1$ that $\EE\left[\langle \mu_0^K, 1\rangle)^p\right]$, we have
    \begin{equation}
        \label{eq:masscontrolp}\EE \left[\sup_{t\leq T}  (\langle \mu_t^K, 1\rangle ^p)\right]\leq C_p (\EE\left[\langle \mu_0^K, 1\rangle)^p\right] + T^p +(T/K)^{p/2}). 
    \end{equation}
\end{lemma}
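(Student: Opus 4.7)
The plan is to apply Proposition~\ref{prop:mu-charact} with the constant test function $f \equiv 1$ and exploit two structural features of the mass process: its monotonicity along sample paths, and simple deterministic bounds on the drift and on the predictable bracket of the compensated jump martingale.

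With $f \equiv 1$, the continuous martingale $M^{K,1,W}$ vanishes (null quadratic variation) and $\Lcal_\nu 1 = 0$, so \eqref{eq-sdemu} reduces to
\[
Y_t := \langle \mu_t^K, 1\rangle = \langle \mu_0^K, 1\rangle + A_t + M_t, \quad A_t := \int_0^t \langle \mu_s^K, b(\cdot,\mu_s^K)\rangle\, ds, \quad M := M^{K,1,\Ncal}.
\]
The integrand in $A_t$ equals $\frac{1}{K}\min(N_s^K, K) \leq 1$, since at most $K$ particles are in the \textit{Grow} regime; hence $A_t \leq t$ pathwise, and by \eqref{eqDefQuadVarN}, $\langle M\rangle_t \leq t/K$ pathwise. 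The only changes to $Y$ are positive jumps of size $1/K$ at birth times, so $Y$ is non-decreasing and consequently
\[
\sup_{t \leq T} Y_t = Y_T.
\]

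For \eqref{eq:masscontrol}, I would localize via $\tau_n := \inf\{t : Y_t \geq n\}$, which tends to $\infty$ almost surely thanks to the non-explosion established in the preceding lemma. The stopped martingale $M^{\tau_n}$ has bounded jumps and bounded bracket, hence is a true martingale; taking expectations at time $T \wedge \tau_n$ in the decomposition yields $\EE[Y_{T\wedge \tau_n}] \leq \EE[Y_0] + T$, and monotone convergence concludes.

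For \eqref{eq:masscontrolp}, monotonicity again reduces the task to bounding $\EE[Y_T^p]$. Using $(a+b+c)^p \leq 3^{p-1}(a^p+b^p+c^p)$ with $A_T \leq T$, it suffices to estimate $\EE[|M_T|^p]$. The Burkholder-Davis-Gundy inequality for càdlàg local martingales (applied after the same localization) combined with the deterministic bound $\langle M\rangle_T \leq T/K$ delivers $\EE[|M_T|^p] \leq C_p (T/K)^{p/2}$, the contribution of the maximal jump of size $1/K$ being absorbed for $p > 2$. The main delicate point is precisely the BDG step for pure-jump martingales when $p > 2$, where one must pass to the optional bracket $[M]$; an alternative route that avoids this is to directly dominate the birth-counting process by a homogeneous Poisson process of rate $K$, yielding $K(Y_T - Y_0) \leq_{\mathrm{st}} \mathrm{Pois}(KT)$, and then to invoke the explicit Poisson moment bound $\EE[\mathrm{Pois}(\lambda)^p] \leq C_p(\lambda^p + \lambda^{p/2})$, which gives the desired form $(T^p + (T/K)^{p/2})$ upon dividing by $K^p$.
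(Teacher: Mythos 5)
Your proposal follows essentially the same route as the paper: test the semimartingale decomposition with $f\equiv 1$, use that the total birth rate is $K\wedge N_s^K\le K$ to bound the drift by $t$ and the predictable bracket by $t/K$, and apply BDG to the compensated jump martingale. Your additional observations — that $t\mapsto\langle\mu_t^K,1\rangle$ is non-decreasing so the supremum is attained at $T$, and that BDG with the \emph{predictable} bracket requires a jump correction for $p>2$ (which you absorb via the $1/K$ jump size, or bypass entirely by Poisson domination) — are refinements of a step the paper carries out more tersely by simply invoking BDG for $p\ge 1$; they do not change the argument but do make it more watertight.
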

\begin{proof}
Recall that $N_t^K$ denotes the size of the population at time $t>0$. As for any $s>0$ we have that $\sum_{i\in V_s^K} b(X_s^i, \mu^K_s) = K \wedge N_s^K$ a.s., testing with $f\equiv  1 $ in \eqref{eq:muk}, the following holds:
 $$\langle \mu_t^K, 1\rangle=\langle \mu_0^K, 1\rangle +\frac{1}{K}\int_0^t\sum_{i\in V_s^K} b(X_s^i, \mu^K_s) ds +M_t^{K,1, \mathcal{N}}=\langle \mu_0^K, 1\rangle +\frac{1}{K}\int_0^t K \wedge N_s^K ds +M_t^{K,1, \mathcal{N}} $$
 Hence, by BDG inequality, for $p\geq 1$ we have that 
 $$\EE \left[\sup_{t\leq T}  (\langle \mu_0^K, 1\rangle )^p\right]\leq C_p\left( \EE\left[\langle \mu_0^K, 1\rangle)^p\right] + T^p +(T/K)^{p/2}\right) $$
 and of course
    $$\EE \left[\langle \mu_t^K, 1\rangle \right]= \EE  \left[\langle \mu_0^k, 1\rangle \right] + \frac{1}{K}\int_0^t\EE  \left[\sum_{i\in V_s^K} b(X_s^i, \mu^K_s)\right] ds\leq \EE \left [\langle \mu_0^K, 1\rangle \right] + t. $$
\end{proof}

\section{Cumulative Distribution Limit }
\label{sec-convergence}

\subsection{Tightness}

First, we will show the tightness of  $(\mu^K)_{K\geq 1}$ seen as a random sequence in $\DD([0,T], M^v_F(\R)))$. 
  For a given $K\geq 1$, 
 denote by $\tilde Q^K \in \Pcal(\DD([0,T], M^v_F(\R)))$  the law of $\mu^K$. 
  
According to Roelly-Copolleta  \cite[Theorem 2.1]{roelly1986} the sequence of laws $(\tilde Q^K)_{K\geq 1}$  is tight in $\Pcal(\DD([0,T], M^v_F(\R)))$   if the following two conditions hold:
\begin{itemize}
    \item for any $f\in \mathcal{C}_0^2(\R)$ we have that $\left((\langle \mu_t^K,f\rangle)_{t\in[0,T]}\right)_{K\geq 1}$ is tight in $\DD([0,T], \R)$,
    \item  the above statement holds also when $f\equiv 1$.
\end{itemize}

To check the above conditions, we use the same argument for any $f\in \mathcal{C}_0^2(\R)$ and $f\equiv 1$. By Aldous-Rebolledo criterion  (see \textit{e.g.} \cite{joffe1986}), the desired tightness follows from the following two conditions:
\begin{enumerate}
    \item For every $t\leq T$, $\left(\langle \mu_t^K,f\rangle\right)_{K\geq 1}$ is tight in $\R$,
    \item For every  $\varepsilon>0$, there exists $\delta>0$  and $K_0$  such that for every sequence of stopping times $(\tau_K)_{K\geq 1}$ we have
    $$\sup_{K\geq K_0}\sup_{\theta\in [0,\delta]}
    \PP \left[|\langle \mu^K_{\tau_K+\theta}-\mu^K_{\tau_K},f \rangle|>\varepsilon\right]<\varepsilon.
    $$
\end{enumerate}

    Let $t\leq T$. For the first point, we notice that  \eqref{eq:masscontrol}  and the fact that  all the coefficients in \eqref{eq-sdemu} are bounded imply that
    $$\sup_{K}\EE\left[ |\langle \mu^K_t, f\rangle |\right]<C_{f,T} .
        $$
        Hence, by Markov inequality 
        $$ \sup_K\mathbb P [\langle \mu^K_t,f\rangle >n] \leq \frac{C_{f,T}}{n}.$$
        Passing to the limit as $n\to \infty$  concludes the first point. 
        
 The second point  follows from the martingale decomposition of our process in (\ref{eq-sdemu},\ref{eqDefQuadVarW},\ref{eqDefQuadVarN}) and boundedness of the coefficients.  Indeed,  for $\delta>0$ and $\theta \in [0, \delta]$ we have
 \begin{align*}
    & \EE|\langle \mu^K_{\tau_K+\theta}-\mu^K_{\tau_K},f \rangle|\leq C_{f}\delta + \sqrt{\EE [ |M^{K,f ,W}_{\tau_K+\theta}-M^{K,f ,W}_{\tau_K}|^2]}+\sqrt{\EE [ |M^{K,f ,\mathcal{N}}_{\tau_K+\theta}-M^{K,f ,\mathcal{N}}_{\tau_K}|^2]}\\
    &\leq  C_{f}\delta + \sqrt{\EE [ (\langle M^{K,f ,W}\rangle_{\tau_K+\theta}-\langle M^{K,f ,W}_{\tau_K}\rangle)]}+\sqrt{\EE [ (\langle M^{K,f ,\mathcal{N}}\rangle_{\tau_K+\theta}-\langle M^{K,f ,\mathcal{N}}\rangle_{\tau_K} ]}\\
    & \leq  C_{f}\delta + C_f \sqrt{ \delta}.
 \end{align*}
Hence the Markov inequality again gives the desired result choosing $\delta$ small enough.
 This gives us the tightness of $\mu^K$ in $\DD([0,T], M^v_F(\R)))$.
 
 For a (non-relabeled) subsequence of $\mu^K$, denote the limit law by $\tilde \Pi$ on   $\DD([0,T], \Mcal_F(\R)))$ and the canonical process in $\DD([0,T], \Mcal_F(\R)))$ by $\mu$. We now wish to show the tightness holds also for $\DD([0,T], M^w_F(\R)))$.
 To do so, 
 according to Roelly-Meleard~\cite[Theorem 3]{meleard1993}, we have to show that:
\begin{enumerate}
\item tightness of $\mu^K$ in $\DD([0,T], M^v_F(\R)))$ holds,
    \item the canonical process under $\tilde \Pi$ is continuous, that is that it belongs to $C([0,T], M^w_F(\R)))$,
    \item the sequence of laws of the processes 
 $(\langle \mu^K_t,1\rangle)_{t\leq T}$ converges weakly to the law of the process  $(\langle \mu_t,1\rangle)_{t\leq T}$
\end{enumerate}
We already obtained the first point. We will come back to the second point later. 

For the third point, define $\Psi: \R \to \R$ of class $\mathcal{C}^2$ non decreasing on $\R^+$  and such that $\Psi(x)= 1$ for $|x|\geq 2$, $\Psi(x)= 0$ for $|x|\leq 1$. For any $m\geq 1$, denote by $\Psi_m(x)= \Psi\left(\frac{|x|}{m}\right)$. Let $F : \DD ([0,T], \R)\to \R$ be bounded and  Lipschitz function. We use below the notation $\langle \mu^K_\cdot ,1\rangle= (\langle \mu^K_t ,1\rangle)_{t\leq T}$. We will show that 
$$\lim_{K\to \infty} \EE \left[F(\langle  \mu^K_\cdot ,1\rangle)\right]=\EE [ F(\langle \mu_\cdot ,1\rangle)].$$
We decompose for $m\geq 1$ in the following way:
\begin{align*}
  & \big| \EE [F(\langle  \mu^K_\cdot ,1\rangle)]-\EE [ F(\langle \mu_\cdot ,1\rangle)]\big |\leq \big| \EE [F(\langle  \mu^K_\cdot ,1\rangle)]-\EE [ F(\langle \mu^K_\cdot ,1-\Psi_m\rangle)]\big | \\
   &+  \big| \EE [F(\langle  \mu^K_\cdot ,1-\Psi_m\rangle)]-\EE [ F(\langle \mu_\cdot ,1-\Psi_m\rangle)]\big | +  \big| \EE [F(\langle  \mu_\cdot ,1-\Psi_m\rangle)]-\EE [ F(\langle \mu_\cdot ,1\rangle)]\big |
\end{align*}
By the Lipshitz  continuity of $F$ and the fact that the Skhorohod distance on $\DD ([0,T], \R)$ is bounded by the supnorm, we have 
\begin{align*}
  & \big| \EE [F(\langle  \mu^K_\cdot ,1\rangle)]-\EE [ F(\langle \mu_\cdot ,1\rangle)]\big |\leq  C \EE \left[\sup_{t\leq T}\big|\langle  \mu_t^K ,1\rangle-\langle \mu_t^K ,1-\Psi_m\rangle\big |\right] \\
   &+  \big| \EE [F(\langle  \mu^K_\cdot ,1-\Psi_m\rangle)]-\EE [ F(\langle \mu_\cdot ,1-\Psi_m\rangle)]\big | + C \EE \left[\sup_{t\leq T}\big|\langle  \mu_t ,1\rangle-\langle \mu_t ,1-\Psi_m\rangle\big |\right]\\
   & \leq C \EE\left[ \sup_{t\leq T}\big|\langle  \langle \mu_t^K ,\Psi_m\rangle\big |\right] + C \EE \left[ \sup_{t\leq T}\big|\langle  \langle \mu_t ,\Psi_m\rangle\big |\right]\\
   &+  \big| \EE [F(\langle  \mu^K_\cdot ,1-\Psi_m\rangle)]-\EE [ F(\langle \mu_\cdot ,1-\Psi_m\rangle)]\big |.
\end{align*}
We will prove that the first two terms on the r.h.s. are arbitrary small in $m$ uniformly in $K$. For the last term, as for fixed $m$, the function $1-\Psi_m \in C^2_c$, by convergence in the vague topology, that term is uniformly small in $K$.
It remains to obtain that 
\begin{equation}
    \label{eq:mukpsim}
    \lim_{m\to \infty }\sup_K \EE \left[\sup_{t\leq T} \langle \mu_t^K, \Psi_m \rangle\right ]=0
\end{equation}

and 
\begin{equation}
    \label{eq:mupsim}
\lim_{m\to \infty } \EE \left[\sup_{t\leq T}\langle \mu_t, \Psi_m \rangle \right]=0
\end{equation}

Applying \eqref{eq-sdemu} with $\Psi_m$ we have that
\begin{align*}
&\langle \mu_t^K, \Psi_m \rangle = \langle \mu_0^K, \Psi_m \rangle +   \int_0^t \int \Big(   \Lcal_{\mu^K_s} \Psi_m(x) + b(x,\mu^K_{s})\Psi_m(x) \Big)   \mu^K_s (dx) ds\\
& + M^{K,\Psi_m, W}_t + M^{K,\Psi_m, \Ncal}_t,
\end{align*}
where 
$$    \left\langle M^{K,\Psi_m,W} \right\rangle_t + \left\langle M^{K,\Psi_m, \Ncal} \right\rangle_t = \frac{1}{K} \int _0^t \left\langle \mu_s^K, \left(\frac{\partial \Psi_m}{\partial x}\right)^2 + b(\cdot, \mu^K_{s}) \Psi_m^2 \right\rangle ds.$$
{Notice that $\norm{\dx \Psi_m }_\infty\leq \frac{\norm{\dx \Psi}_\infty}{m}$ and that $\norm{\dx^2 \Psi_m }_\infty\leq \frac{\norm{\dx^2 \Psi}_\infty}{m^2}$.} Hence,
$$\Lcal_{\mu^K_s} \Psi_m(x) \leq \frac{C_{\Psi,\chi}}{m}.   $$
The latter combined with the fact  that $a$ and $b$ are bounded yields
\begin{align*}
    \sup_{t\leq T} \langle \mu_t^K, \Psi_m \rangle &\leq \langle \mu_0^K, \Psi_m \rangle + \frac{C}{m} \int_0^T \langle \mu^K_s,1\rangle ds +   \int_0^T \sup_{s\leq t} \langle    \Psi_m, \mu^K_s  \rangle ds\\
    & + \sup_{t\leq T} M^{K,\Psi_m, W}_t +\sup_{t\leq T } M^{K,\Psi_m, \Ncal}_t.
\end{align*}
Applying BDG inequality and Lemma \ref{lemma:massUniform} leads to
\begin{align*}
&\EE\left[\sup_{t\leq T} \langle \mu_t^K, \Psi_m \rangle\right ] \leq \EE \left[\langle \mu_0^K, \Psi_m \rangle \right]+ \frac{C_T+ \EE  \left[\langle \mu^K_0,1\rangle\right]}{m} \\
&+ \int_0^T  \EE  \left[\sup_{s\leq t} \langle    \Psi_m, \mu^K_s  \rangle\right] dt + \frac{C}{K} \int_0^T \EE\left[ \langle \mu^K_t,1\rangle\right] dt. 
\end{align*}
 Gronwall inequality and again Lemma \ref{lemma:massUniform} give
$$\EE\left[\sup_{t\leq T} \langle \mu_t^K, \Psi_m \rangle \right] \leq \left(\EE \left[\langle \mu_0^K, \Psi_m \rangle \right]+ \frac{C_T+ \EE \left[ \langle \mu^K_0,1\rangle\right]}{m} +  \frac{CT}{K} \left(\EE\left[ \langle \mu^K_0,1\rangle\right] +T \right)\right)e^T . $$
Taking the supremum in $K$ and the limit $m\to \infty$  we obtain that \eqref{eq:mukpsim} holds provided that $\lim_{m\to \infty }\sup_K \EE \langle \mu_0^K, \Psi_m \rangle=0 $. Given the convergence assumption on $\mu_0^K$ and the moment condition \eqref{cond:initialdata}, the latter holds (see \textit{e.g.} \cite[Lemma 3.4]{FontbonaMeleard}, \cite[Equation (32)]{MelTran}).
%(\textcolor{red}{This is done in \cite{FontbonaMeleard} p. 13}).

Now, from here we prove \eqref{eq:mupsim} through a monotone convergence argument. Define for  $l\in \N$, the functions  $\Psi_{m,l}:= \Psi_m (1-\Psi_{l}) $. These are compactly supported functions that are monotonically increasing to $\Psi_m$ as $l \to \infty$. Then, by the convergence in vague topology of $\mu^K$ and the fact that $\Psi_{m,l} \leq \Psi_{m}$  we have that
$$\EE\left[sup_{t \leq T} \langle \mu_t ,   \Psi_{m,l}\rangle\right] = \lim_{K \to \infty }\EE\left[\sup_{t \leq T} \langle \mu_t^K ,   \Psi_{m,l}\rangle\right] \leq \liminf_{K \to \infty} \EE\left[\sup_{t \leq T} \langle \mu_t^K ,   \Psi_{m}\rangle\right].$$
Now we take the limit as $l\to \infty$ and we use monotone convergence 
$$\EE\left[\sup_{t \leq T} \langle \mu_t ,   \Psi_{m}\rangle\right] = \lim_{K \to \infty }\EE\left[\sup_{t \leq T} \langle \mu_t^K ,   \Psi_{m,l}\rangle\right] \leq \liminf_{K \to \infty} \EE\left[\sup_{t \leq T} \langle \mu_t^K ,   \Psi_{m}\rangle\right].$$
We can conclude that  \eqref{eq:mupsim} holds combining the above inequality and \eqref{eq:mukpsim}.

Now we prove the second point. By the construction of the process $\mu^K$, it holds almost surely that
        $$\sup_{t\in [0,T]}\sup_{\norm{f}_\infty\leq 1}\left| \langle\mu^K_t,f \rangle-\langle\mu^K_{t^-},f \rangle\right|\leq \frac{1}{K}.
        $$
        
This implies that $\mu$ is a.s.  in $C([0,T], \Mcal_F^v(\R))$. A consequence of \eqref{eq:mupsim} is that we can extract a sequence from $\left(\sup_{t\leq T}\langle \mu, \Psi_m\rangle \right)_{m\geq 1}$ that a.s. converges to zero. Now, for $t>0$ and $f \in C_b(\R)$  passing to the limit when $m\to \infty$ in the following inequality 
$$|\langle \mu_t-\mu_{t^-}, f\rangle| \leq |\langle \mu_t-\mu_{t^-}, f(1-\Psi_m)\rangle| +  |\langle \mu_t, f\Psi_m\rangle| +  |\langle \mu_{t^-}, f\Psi_m\rangle| $$
we can conclude that $\mu \in C([0,T], \Mcal_F^w(\R))$ a.s.

\subsection{Identification of the Limit}

Before proving this result we need some technical preparation. We recall that for a given function $\varphi_t\in C^{1,2}_c([0,+\infty)\times \R)$, we define $\phi_t(x)=\Hcal \ast \varphi_t =\int_{-\infty}^x\varphi_t(y)dy $ and $\phi_t \in C^{1,2} _b([0,+\infty)\times \R)$, which makes $\phi_t$ an acceptable test function for the SDE (\ref{eq-sdemu}). For the sake of rewriting the SDE in a convenient manner, we now give a series of identities:
\begin{align}
\label{eqIBPphit}
    & \langle \mu^K_t, \phi_t\rangle = \langle \mu^K_t, \Hcal \ast\varphi_t\rangle = \langle \tilde \Hcal \ast  \mu^K_t , \varphi_t\rangle \\
\label{eqIBPphi0}
    &\langle \mu^K_0, \phi_0\rangle =  \langle \tilde \Hcal \ast  \mu^K_0 , \varphi_0\rangle \\
\label{eqIBPdsphi}
    & \int_0^t\left\langle \mu^K_s, \frac{\partial \phi_s}{\partial s}\right\rangle ds =   \int_0^t\left\langle \tilde \Hcal \ast  \mu^K_s , \frac{\partial \varphi_s}{\partial s}\right\rangle ds\\
\label{eqIBPdxxphi}
    & \int_0^t\left\langle \mu^K_s, \frac{\partial^2 \phi_s}{\partial x^2}\right\rangle ds =   \int_0^t\left\langle \tilde \Hcal \ast  \mu^K_s , \frac{\partial^2 \varphi_s}{\partial x^2}\right\rangle ds.
\end{align}
Using a similar argument than Jourdain (Lemma 1.5 in \cite{jourdain2000}), we apply an integration by parts to the nonlinear advection and birth terms. Indeed, consider:
\begin{align*}
    &\left\langle\mu_s^K, \frac{\partial \phi_s}{\partial x}a(\cdot,\mu_s^K)\right\rangle\\
    =&\frac{1}{K}\sum_{i=1 }^{\langle \mu^K_s,K \rangle } \frac{\partial \phi_s}{\partial x}(H^i(\mu^K_s))  a(H^i(\mu^K_s), \mu^K_s)\\
    =& \frac{1}{K}\sum_{i=1 }^{\langle \mu^K_s,K \rangle } \int_{-\infty}^{H^i(\mu^K_s)}\frac{\partial \varphi_s}{\partial x}(x)dx  a(H^i(\mu^K_s), \mu^K_s) \\
     =& \int_\R \frac{\partial \varphi_s}{\partial x}(x)\frac{1}{K}\sum_{i=1 }^{\langle \mu^K_s,K \rangle } \mathbbm{1}_{\left(x\leq H^i(\mu^K_s)\right)}  a(H^i(\mu^K_s), \mu^K_s) dx\\
     =& \int_\R \frac{\partial \varphi_s}{\partial x}(x)\sum_{i=1 }^{\langle \mu^K_s,K \rangle } \mathbbm{1}_{\left(x\leq H^i(\mu^K_s)\right)}  \frac{1}{K} \tilde a\left( \frac{i}{K}\right) dx,
\end{align*}
where we recall that $\tilde a(p)=\mathbbm{1}_{p>1}$. Observing next that $\tilde a(p)$ is constant on the intervals of the form $\left(\frac{i-1}{K},\frac{i}{K} \right]$:
\begin{align*}
    \frac{1}{K} \tilde a\left( \frac{i}{K}\right) = 
    \int_\frac{i-1}{K}^\frac{i}{K}\tilde a\left( \frac{i}{K}\right)dp = \int_\frac{i-1}{K}^\frac{i}{K}\tilde a\left(p\right)dp = \int_0^{+\infty}\tilde a\left(p\right)\1_{(\frac{i-1}{K}<p\leq \frac{i}{K})}dp .
\end{align*}
Let us remark here that, because of the preceding observation, we do not have to deal with a remainder term, unlike in \cite{jourdain2000}, where $\tilde a$ is not piecewise constant.

This leads to:
\begin{align*}
    & \int_\R \frac{\partial \varphi_s}{\partial x}(x)\sum_{i=1 }^{\langle \mu^K_s,K \rangle } \mathbbm{1}_{\left(x\leq H^i(\mu^K_s)\right)}  \frac{1}{K} \tilde a\left( \frac{i}{K}\right) dx\\
    =& \int_\R \int_0^{+\infty }\frac{\partial \varphi_s}{\partial x}(x)\tilde a(p)\sum_{i=1 }^{\langle \mu^K_s,K \rangle } \mathbbm{1}_{\left(x\leq H^i(\mu^K_s)\right)}  \1_{(\frac{i-1}{K}<p\leq \frac{i}{K})} dpdx
\end{align*}
We now show that:
\begin{align}
\label{eqConvIndicatrix}
    \sum_{i=1 }^{\langle \mu^K_s,K \rangle } \mathbbm{1}_{\left(x\leq H^i(\mu^K_s)\right)}  \1_{(\frac{i-1}{K}<p\leq \frac{i}{K})} =
    \1_{\left(p\leq \tilde \Hcal \ast \mu_s^K(x) \right)}
\end{align}
Consider $i_0$ the unique index such that $H^{i_0+1}\left(\mu^K_s\right)<x\leq H^{i_0}\left(\mu^K_s\right)$, we start by showing  that the sum in the left hand-side of Equation (\ref{eqConvIndicatrix}) is equal to $1$ if and only if $p\leq \frac{i_0}{K}$. Indeed, as the intervals $\left(\frac{i-1}{K}, \frac{i}{K} \right]$ are disjoint, at most one of the terms in the sum can be nonzero. Denote $j_0$, satisfying $\frac{j_0-1}{K}<p\leq \frac{j_0}{K}$, and which corresponds to the index of the only possibly nonzero term in the sum. If that term is equal to 1, then $x\leq H^{j_0}\left(\mu^K_s\right)$. But by the definition of $i_0$, this means that $ H^{i_0}\left(\mu^K_s\right)\leq H^{j_0}\left(\mu^K_s\right)$, which in turn implies that $j_0\leq i_0$ and that $p\leq \frac{j_0}{K}\leq \frac{i_0}{K}$. Conversely, if that term is equal to 0, then $x> H^{j_0}\left(\mu^K_s\right)$, which shows that $H^{j_0}\left(\mu^K_s\right)\leq H^{i_0+1}\left(\mu^K_s\right)$ and that $j_0\geq i_0+1$. This in turn implies that $p> \frac{j_0-1}{K}\geq \frac{i_0}{K}$. In conclusion, the sum in the left hand-side of Equation (\ref{eqConvIndicatrix}) is equal to $\1_{\left(p\leq\frac{i_0}{K}\right) }$. 
But $i_0$ precisely satisfies $\frac{i_0}{K}= \tilde \Hcal \ast \mu_s^K(x) $, which establishes Equation (\ref{eqConvIndicatrix}). Hence:
\begin{align*}
   & \int_\R \int_0^{+\infty }\frac{\partial \varphi_s}{\partial x}(x)\tilde a(p)\sum_{i=1 }^{\langle \mu^K_s,K \rangle } \mathbbm{1}_{\left(x\leq H^i(\mu^K_s)\right)}  \1_{(\frac{i-1}{K}<p\leq \frac{i}{K})} dpdx \\
    =&\int_\R \int_0^{+\infty }\frac{\partial \varphi_s}{\partial x}(x)\tilde a(p) \1_{(p\leq \tilde \Hcal \ast \mu^K(x) )}dpdx\\
    =& \int_\R \frac{\partial \varphi_s}{\partial x}(x)A\left( \tilde \Hcal \ast \mu^K(x)\right)dx
\end{align*}
Thus in conclusion, we have that:
\begin{align}
\label{eqIBPA}
\left\langle \mu_s^K,\frac{\partial \phi_s}{\partial x}a(\cdot,\mu_s^K)\right\rangle = \left\langle\mu_s^K, \Hcal\ast\frac{\partial \varphi_s}{\partial x}a(\cdot,\mu_s^K)\right\rangle
    = \left\langle A( \tilde \Hcal \ast \mu_s^K ), \frac{\partial \varphi_s}{\partial x}\right\rangle 
\end{align}
Similarily:
\begin{align}
\label{eqIBPB}
\left\langle \mu_s^K,\phi_sb(\cdot,\mu_s^K)\right\rangle=
    \frac{1}{K} \sum_{i=1 }^{\langle \mu^K_s,K \rangle } \phi_s(H^i(\mu^K_s)) b(H^i(\mu^K_s), \mu^K_s) 
    %=& \int_\R \varphi_s(y)\sum_{i=1 }^{\langle \mu^K_s,K \rangle } \mathbbm{1}_{\left(y\leq H^i(\mu^K_s)\right)}  \frac{1}{K} b\left( \frac{i}{K}\right) dy \\
    = \langle B( \tilde \Hcal \ast \mu_s^K ), \varphi_s \rangle.
\end{align}
Next, for a fixed $\varphi\in C^{1,2}_c([0,+\infty)\times\R)$ we recall the definition (\ref{eq-defF}) of the functional $\Fcal_{t,\varphi}$ for $m\in \DD([0,T], \Mcal_F(\R))$ by:
 \begin{align*}
            &\Fcal_{t,\varphi}(m)=\langle \tilde\Hcal \ast m_t,\varphi_t\rangle-\langle \tilde\Hcal \ast m_0,\varphi_0\rangle-\int_0^t\langle \tilde\Hcal \ast m_s,\partial_s \varphi_s\rangle ds\\
            &-\int_0^t\langle \tilde\Hcal \ast m_s,\partial_x^2 \varphi_s\rangle ds-\int_0^t\langle \chi A(\tilde\Hcal \ast m_s),\partial_x\varphi_s\rangle ds-\int_0^t\langle B(\tilde\Hcal \ast m_s),\varphi_s\rangle ds 
        \end{align*}

Our goal is to establish that the support of $\tilde\Hcal \ast \mu^\infty $ consists of weak solutions to Equation (\ref{eq:limitPDE-F}), which is equivalent to:
\begin{align}
\label{eq:FcalMuInftyIsZero}
    \Fcal_{t,\varphi}(\mu^{\infty}) = 0\hspace{.5cm} \Pi^\infty(d\mu^\infty)\text{ a.s.}
\end{align}

\textbf{Step a):} We prove that  $\lim_{K\to +\infty} \EE\left[\left|\Fcal_{t,\varphi}(\mu^K) \right|^p \right] = 0$.

Using SDE (\ref{eq-sdemu}) and Identities (\ref{eqIBPphit}, \ref{eqIBPphi0}, \ref{eqIBPdsphi}, \ref{eqIBPdxxphi}, \ref{eqIBPA}, \ref{eqIBPB}), we observe that:
\begin{align*}
    \Fcal_{t,\varphi}\left( \mu^K \right) = M_t^{K,\phi,W}+ M_t^{K,\phi,\Ncal}
\end{align*}

Using BDG Inequality, for $p\geq 1$, the definition of the quadratic variations given by (\ref{eqDefQuadVarW}, \ref{eqDefQuadVarW}) and Lemma \ref{lemma:massUniform}:
\begin{align*}
   & \EE\left[\left|\Fcal_{t,\varphi}(\mu^K) \right|^p \right]\\
    &\leq
    2^p\left( \EE\left[\sup_{s\in [0,t]}\left|M_s^{K,\phi,W} \right|^p \right]
    +\EE\left[\sup_{s\in [0,t]}\left|M_s^{K,\phi,\Ncal} \right|^p \right]
    \right)\\
   & \leq
    2^pC_p\left( \EE\left[\left\langle M^{K,\phi,W} \right\rangle^\frac{p}{2}_t \right]
    +\EE\left[\left\langle M^{K,\phi,\Ncal} \right\rangle^\frac{p}{2}_t \right]
    \right) \\
   & \leq
    \frac{2^pC_pt^\frac{p}{2}\left(\sup_{s\in[0,t]}\norm{\phi_s}_\infty^p+\sup_{s\in[0,t]}\norm{\dx \phi_s}_\infty^p\right)}{K^\frac{p}{2}} \EE\left[\sup_{s\in [0,t]}\langle \mu_s^K,1 \rangle^\frac{p}{2} \right]\\
   & \leq
    \frac{2^pC_p't^\frac{p}{2}\left(\sup_{s\in[0,t]}\norm{\phi_s}_\infty^p+\sup_{s\in[0,t]}\norm{\dx \phi_s}_\infty^p\right)}{K^\frac{p}{2}}  \left(\langle \mu_0^k, 1\rangle)^\frac{p}{2} + T^\frac{p}{2} +(T/K)^{\frac{p}{4}}\right)
\end{align*}
In conclusion:
\begin{align}
\label{eq:limEEFcalMuKisZero}
    \lim_{K\to +\infty} \EE\left[\left|\Fcal_{t,\varphi}(\mu^K) \right|^p \right] = 0
\end{align}

\textbf{Step b):} We prove that $|\Fcal(\mu^K)|^p\to |\Fcal(\mu^\infty)|^p$ in law.

Knowing that any subsequential limit $\mu^\infty$ of $(\mu^K)$ satisfies a.s. $\mu^\infty \in C([0,T], \Mcal_F(\R))$, we will show that $\Fcal_{t,\varphi}$ is (sequentially) continuous at points $\nu \in C([0,T], \Mcal_F(\R))$.

Let $(\nu^K)\in (\DD([0,T],\Mcal_F(\R))^\N$ such that $\lim_{K\to +\infty} \nu^K = \nu\in C([0,T], \Mcal_F(\R))$. The limit being continuous in time, we have that the convergence is uniform, \textit{i.e.} $\lim_{K\to +\infty} \\ \sup_{t\in [0,T]} d(\nu^K_t,\nu_t) =0$ (see Chapter 6 in \cite{pollard1984}), where $d$ is the Prokhorov metric (see Appendix A 2.5 in \cite{daley2003}). As a consequence, $\langle \nu_t^K,1\rangle$ is bounded uniformly in $K$ and in $t\in [0,T]$. We show the convergence for two terms and the other terms in $\Fcal_{t,\varphi}$ are dealt with in a similar manner. \\
By noticing that $\langle \tilde\Hcal \ast \nu^K_s, \dx^2\varphi_s\rangle \leq \sup_{s\in[0,T]}\norm{\dx^2\varphi_s}_1 \sup_{K\in \N, s\in[0,T]}\langle \nu_t^K,1\rangle$, the dominated convergence theorem can be applied: 
\begin{align*}
    &\lim_{K\to +\infty}\int_0^t \langle \tilde\Hcal \ast \nu^K_s, \dx^2\varphi_s \rangle ds = \int_0^t \lim_{K\to +\infty}\langle \tilde\Hcal \ast \nu^K_s, \dx^2\varphi_s \rangle ds =
    \int_0^t \lim_{K\to +\infty}\langle  \nu^K_s, \Hcal \ast \dx^2\varphi_s\rangle ds
\end{align*}
Next, as the convergence $\lim_{K\to +\infty} \nu_s^K = \nu_s$ is weak, $\Hcal \ast \dx^2\phi$ is an acceptable test function and:
\begin{align*}
    \int_0^t \lim_{K\to +\infty}\langle  \nu^K_s, \Hcal \ast \dx^2\varphi_s\rangle ds=
    \int_0^t \langle  \nu_s, \Hcal \ast \dx^2\varphi_s \rangle ds= 
    \int_0^t \langle \tilde \Hcal \ast \nu_s,  \dx^2\varphi_s\rangle ds
\end{align*}
Next, observing that $\tilde\Hcal \ast \nu_s^K(x) = \nu_s^K([x,+\infty))$ and converges pointwise to $\tilde\Hcal \ast \nu_s(x) $ for every point of continuity $x$ of $\tilde\Hcal \ast \nu_s $. $\tilde\Hcal \ast \nu_s $, being a nonincreasing function, can have at most countably many points of discontinuity. Thus, a.e. $\lim_{K\to +\infty}\tilde\Hcal \ast \nu_s^K(x) = \tilde\Hcal \ast \nu_s(x)$. Furthermore, noticing that $A(p)\leq p$, the dominated convergence theorem applies again: 
\begin{align*}
    &\lim_{K\to +\infty}\int_0^t \langle A(\tilde\Hcal \ast \nu^K_s), \dx\varphi_s \rangle ds = 
    \int_0^t \lim_{K\to +\infty}\langle A(\tilde\Hcal \ast \nu^K_s), \dx\varphi_s \rangle ds 
\end{align*}
Then, as $\dx \varphi_s$ is compactly supported, we have that:
\begin{align*}
   \left|  A\left(\tilde\Hcal \ast \nu^K_s(x)\right) \dx\varphi_s(x) \right| \leq \sup_{s\in[0,T]}\norm{\dx^2\varphi_s}_1 \sup_{K\in \N, s\in[0,T]}\langle \nu_t^K,1\rangle \mathbbm{1}_{\text{supp } \dx \varphi_s}(x),
\end{align*}
and we may apply once more the dominated convergence theorem. Finally, using the continuity of $A$, we obtain:
\begin{align*}
    &\int_0^t \lim_{K\to +\infty}\langle A(\tilde\Hcal \ast \nu^K_s), \dx\varphi_s \rangle ds \\
    =&\int_0^t \left\langle A\left(\lim_{K\to +\infty}\tilde\Hcal \ast \nu^K_s\right), \dx\varphi_s \right\rangle ds \\
    =& 
    \int_0^t \langle A(\tilde\Hcal \ast \nu_s), \dx\varphi_s \rangle ds 
\end{align*}
In conclusion, we have shown that:
\begin{align*}
    \lim_{K\to +\infty} \Fcal_{t,\varphi}\left(\nu^K\right)=\Fcal_{t,\varphi}(\nu),
\end{align*}
and thus that $\Fcal_{t,\varphi}$ is continuous at every point $\nu \in C([0,T],\Mcal_F(\R))$, which establishes that $|\Fcal(\mu^K)|^p\to |\Fcal(\mu^\infty)|^p$ in law.

\textbf{Step c):} We prove the convergence of the mean, \textit{i.e.} $\lim_{K\to +\infty}\EE\left[|\Fcal(\mu^K)|\right]=\EE\left[|\Fcal(\mu^\infty)|\right]$

The convergence in law combined with uniform integrability implies the convergence of the mean (see Proposition 2.3 in Appendixes in \cite{ethier1986}). Therefore, let us show that the sequence $\left(|\Fcal_{t,\phi}(|\mu^K)|\right)_{K}$ is uniformly integrable. By straightforward computations, we have that: 
\begin{align*}
    \left|\Fcal_{t,\phi}(\mu^K)\right|\leq C_{t,\phi}\left( 1+ \sup_{s\in [0,T]}\langle\mu_s^K,1\rangle\right)
\end{align*}
But according to Lemma \ref{lemma:massUniform} and the assumption that $\langle \mu_0^K,1\rangle$ has a second order moment, the $L^2$-norm of $\sup_{s\in [0,T]}\langle\mu_s^K,1\rangle$ is uniformly bounded. Thus $\left|\Fcal_{t,\phi}(\mu^K)\right|$ is uniformly integrable (see Proposition 2.2 in Appendixes in \cite{ethier1986}), which establishes that:
\begin{align*}
\EE\left[\left|\Fcal_{t,\varphi}(\mu^{\infty}) \right| \right]=0,
\end{align*}
This concludes the proof of Claim (\ref{eq:FcalMuInftyIsZero}) and thus the support of $\tilde\Hcal \ast \mu^\infty$ consists of  weak solutions to Equation (\ref{eq:limitPDE-F}).

\section{Regularity of and Uniqueness of the Limit}
\label{sec-uniqueness}

In this Section, we prove that the support of $\tilde\Hcal \ast \mu^\infty$ consists of a single point under certain assumptions on the initial datum specified in Theorem \ref{thm:uniqueness} and thus that the limit is unique. 

In the first Subsection, we prove the regularity result of Proposition \ref{prop:reg}, namely that a weak solution $F\in L^\infty([0,T]\times \R)$ to Equation (\ref{eq:weak-sol}) satisfies $F\in C([0,T],W^{1,\infty}(\R))$, that $u:=-\partial_x F \in C([0,T],L^p(\R))$ for all $p\in [1,\infty]$, and that $u$ is a weak solution to Equation (\ref{eq:weak-sol_u}) .

In the second Subsection, using the latter regularity result, we prove Theorem \ref{thm:uniqueness}, which states that the solution $F\in L^\infty([0,T]\times \R)$ to Equation (\ref{eq:weak-sol}) (or equivalently $u$ solution to Equation (\ref{eq:weak-sol_u})) is unique under additional assumptions on the initial datum $u_0$. 

Before moving on to the proof of these two results, let us notice that $F$ in the support of $\tilde\Hcal \ast \mu^\infty$ satisfies $F\in L^\infty([0,T]\times \R)$. Indeed, using Lemma \ref{lemma:massUniform} and the fact that $\mu_t\geq 0$, we have that
\begin{align*}
        \|F_t\|_\infty = F_t(-\infty) = \langle \mu_t^\infty,1\rangle \leq \langle \mu_0,1 \rangle +t.
    \end{align*}
Thus, the regularity result of Proposition \ref{prop:reg} applies to the limit obtained in Section \ref{sec-convergence}, and so does the uniqueness result of Theorem \ref{thm:uniqueness}, as long as the initial datum $u_0$ satisfies the assumptions of the latter theorem. Hence, we may claim that the limit obtained in Section \ref{sec-convergence} is unique.

\subsection{Regularity: Proof of Proposition \ref{prop:reg}.}
\label{sec-uniqueness-reg}

\begin{lemma}
\label{lem:rep-F}
    [A weak solution $F$ to Equation (\ref{eq:weak-sol}) is a mild solution] Let $F\in L^\infty([0,T]\times \R)$ be a weak solution of Equation (\ref{eq:limitPDE-F}), then $F$ satisfies for all $t\in[0,T]$ and almost all $x\in \R$, the following representation formula:
\begin{align}
\label{eq:RepForm-F}
F_t= e^{t\dx^2}F_0 + \int_0^t \left(-\chi \dx e^{(t-s)\dx^2}  A(F_s)  + e^{(t-s)\dx^2} B(F_s) \right)ds .
\end{align}
\end{lemma}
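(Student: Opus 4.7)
My strategy is to derive the mild form by testing the weak formulation of Definition \ref{def:F-weak} against a backward heat kernel. Fix $t \in (0, T]$ and $\psi \in C_c^\infty(\R)$, and set $\varphi_s(x) = (e^{(t-s)\partial_x^2}\psi)(x)$ for $s \in [0, t]$. This test function solves the backward heat equation $\partial_s \varphi_s + \partial_x^2 \varphi_s = 0$ with terminal condition $\varphi_t = \psi$, and $\varphi_s, \partial_s \varphi_s, \partial_x \varphi_s, \partial_x^2 \varphi_s$ all belong to $C([0,t], L^1(\R) \cap L^\infty(\R))$ with uniform-in-$s$ bounds, thanks to standard heat kernel estimates and the compact support of $\psi$.

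The weak formulation is a priori stated only for $\varphi \in C_c^{1,2}([0, \infty) \times \R)$, so a preliminary step is required to extend it by density to test functions that are $C^{1,2}$ on $[0, t] \times \R$ with $\varphi, \partial_s \varphi, \partial_x \varphi, \partial_x^2 \varphi$ bounded and integrable in $x$ uniformly in $s \in [0, t]$. This is accomplished by applying the weak formulation to $\zeta_R(x) \rho_\eta(s) \tilde \varphi_s(x)$ with smooth spatial and temporal cutoffs, and letting $R \to \infty$, $\eta \to 0$. The hypothesis $F \in L^\infty([0,T] \times \R)$ together with $|A(F_s)| \leq \norm{F}_\infty$ and $|B(F_s)| \leq 1$ ensures that every boundary contribution produced by the cutoffs vanishes by dominated convergence.

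Substituting our special $\varphi$ into the extended weak formulation, the identity $\partial_s \varphi_s + \partial_x^2 \varphi_s = 0$ cancels two of the four integrals on the right-hand side, leaving
\begin{equation*}
\int_\R \psi F_t \, dx - \int_\R (e^{t\partial_x^2}\psi) F_0 \, dx = \chi \int_0^t \int_\R A(F_s) \, \partial_x \varphi_s \, dx \, ds + \int_0^t \int_\R B(F_s) \, \varphi_s \, dx \, ds.
\end{equation*}
Exploiting the self-adjointness of $e^{\tau \partial_x^2}$ (as a pairing of an $L^\infty$ function with an $L^1$ function, via Fubini on the Gaussian kernel) and the commutation $\partial_x e^{\tau \partial_x^2} = e^{\tau \partial_x^2} \partial_x$, and integrating by parts once on the $A$-term, all operators are transferred onto $F_0, A(F_s), B(F_s)$, yielding
\begin{equation*}
\int_\R \psi \left( F_t - e^{t\partial_x^2} F_0 + \chi \int_0^t \partial_x e^{(t-s)\partial_x^2} A(F_s) \, ds - \int_0^t e^{(t-s)\partial_x^2} B(F_s) \, ds \right) dx = 0.
\end{equation*}
Since $\psi \in C_c^\infty(\R)$ is arbitrary, the bracketed quantity vanishes for almost every $x \in \R$ and each $t \in [0, T]$, which is precisely (\ref{eq:RepForm-F}).

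The principal obstacle is the test-function extension in the second paragraph: our natural choice $\varphi_s = e^{(t-s)\partial_x^2}\psi$ fails to be compactly supported in either space or time. The error terms generated by the spatial cutoff $\zeta_R$, in particular those involving $\zeta_R'$ and $\zeta_R''$ paired with $F_s$, require the uniform $L^1$-integrability of $\partial_x \varphi_s$ and $\partial_x^2 \varphi_s$, which is supplied by the heat semigroup's smoothing of the compactly supported $\psi$. The temporal cutoff $\rho_\eta$ is dealt with by a straightforward continuity-in-$s$ argument.
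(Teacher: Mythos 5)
Your proof is correct and follows essentially the same route as the paper: test the weak formulation against the backward heat flow $\varphi_s = e^{(t-s)\partial_x^2}\psi$ of a compactly supported $\psi$, justify this by extending the class of admissible test functions (the paper invokes density in $L^\infty([0,T],W^{2,1}(\R))$ where you carry out an explicit cutoff argument, but the substance is identical), cancel the parabolic terms, and transfer the semigroup onto $F_0$, $A(F_s)$, $B(F_s)$ by self-adjointness. No gaps.
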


\begin{proof}
Since $F\in L^\infty([0,T]\times \R)$ and because of the density of $C^\infty_c([0,T]\times\R)$ in $L^\infty([0,T],W^{2,1}(\R))$, we can extend the weak formulation (\ref{eq:weak-sol}) to test functions in $L^\infty([0,T],W^{2,1}(\R))$.

Then, let $\varphi \in C^\infty_c(\R)$ and consider $\phi_s = e^{(t-s)\partial_x^2}\varphi$, which may be used as a test function by the preceding point and standard properties of the heat operator. By using the fact that $\partial_s \phi_s + \partial_x^2\phi_s=0$, that $\phi_t=\varphi$ and $\phi_0=e^{t\partial_x^2}\varphi$, we have that:
    \begin{align*}
       & \langle F_t, \varphi \rangle -\langle F_0, e^{t\partial_x^2}\varphi\rangle \\
        = & \int_0^t  \left\langle F_s, \partial_s \phi_s + \partial_x^2\phi_s\right\rangle ds +  \int_0^t \left( \left\langle  \chi  A(F_s),  \partial_x e^{(t-s)\partial_x^2}\varphi\right\rangle + \left\langle  B(F_s),  e^{(t-s)\partial_x^2}\varphi\right\rangle \right) ds\\
        = &  \int_0^t \left( \left\langle  \chi  A(F_s),  \partial_x e^{(t-s)\partial_x^2}\varphi\right\rangle + \left\langle  B(F_s),  e^{(t-s)\partial_x^2}\varphi\right\rangle \right) ds.
    \end{align*}
    Finally using the fact that the heat operator $e^{(t-s)\dx^2}$ and its derivative $\dx e^{(t-s)\dx^2}$ act on distribution as a convolution and by the fact that $A(F),B(F)\in L^\infty([0,T]\times \R)$ (since $F\in L^\infty([0,T]\times \R)$ and $A(\cdot),B(\cdot)$ are Lipschitz continuous), we can rewrite this equality as:
    \begin{align*}
        \left\langle F_t - e^{t\dx^2}F_0 - \int_0^t \left(-\chi \dx e^{(t-s)\dx^2}  A(F_s)  + e^{(t-s)\dx^2} B(F_s) \right)ds , \varphi \right\rangle =0,
    \end{align*}
    which concludes the proof as $\varphi \in C^\infty_c(\R)$ is arbitrary.
\end{proof}

Before proving Proposition \ref{prop:reg}, we state the following standard bounds on the heat operator, which are a consequence of the expression of the heat operator $e^{t\partial_x^2}$ and Young's inequality for convolutions:
    \begin{align}
       \label{bd:Heat} &\norm{e^{t\partial_x^2} f}_p \leq \norm{ f}_p,\\
       \label{bd:Heatdx} &\norm{\partial_x e^{t\partial_x^2} f}_p \leq \frac{C}{\sqrt{t}}\norm{ f}_p, \\
        &\norm{\partial_x^2 e^{t\partial_x^2} f}_p \leq \frac{C}{t}\norm{ f}_p 
        \label{bd:Heatdxx}.
\end{align}
Furthermore, we will need two more bounds involving the $C^{0,\alpha}(\R)$ space equipped with the norm $\|\cdot\|_{C^{0,\alpha}(\R)} = \|\cdot\|_\infty+[\cdot]_\alpha$. In order to establish these bounds, we use the real interpolation theory with the K-method (see Chapter 1 in \cite{lunardi2009}), according to which $C^{0,\alpha}(\R)=\left(C^{0}(\R),C^{1}(\R) \right)_{\alpha,\infty}$, where $C^{k}(\R)$ is the space of functions $k$-times differentiable with bounded derivatives, equipped with its usual $W^{k,\infty}(\R)$-norm. For the first bound, we use the fact that $\left\|\dx e^{t\partial_x^2}\right\|_{\Lcal(C^{0}(\R),C^{0}(\R))}\leq Ct^{-\frac{1}{2}}$ (from Bound (\ref{bd:Heatdx})) and $\left\|\dx e^{t\partial_x^2}\right\|_{\Lcal(C^{0}(\R),C^{1}(\R))}\leq C\left(t^{-\frac{1}{2}}+t^{-1}\right)$ (from Bounds (\ref{bd:Heatdx},\ref{bd:Heatdxx})), which then leads to 
\begin{align}
       \label{bd:HeatHolder}
       &\left\| \dx e^{t\partial_x^2} f\right\|_{C^{0,\alpha}(\R)} \leq C \left( \frac{1}{\sqrt{t}}+ \frac{1}{t^{\frac{1+\alpha}{2}}}\right)\norm{ f}_\infty
\end{align}
Similarly, as a consequence of the bounds $\left\|\dx^2 e^{t\partial_x^2}\right\|_{\Lcal(C^{0}(\R),C^{0}(\R))}\leq Ct^{-1}$ (from Bound (\ref{bd:Heatdxx})) and $\left\|\dx^2 e^{t\partial_x^2}\right\|_{\Lcal(C^{1}(\R),C^{0}(\R))}\leq Ct^{-\frac{1}{2}}$ (from Bound (\ref{bd:Heatdx})), we obtain
\begin{align}
\label{bd:HeatrevHolder}
       &\left\|\partial_x^2 e^{t\partial_x^2} f\right\|_\infty \leq \frac{C}{t^{1-\frac{\alpha}{2}}}\left\| f\right\|_{C^{0,\alpha}(\R)} 
    \end{align}

\begin{proof}[Proof of Proposition~\ref{prop:reg}]

Using the representation formula (\ref{eq:RepForm-F}), we show that each of the terms belongs to $C([0,T], W^{1,\infty}(\R))$.

First, since $u_0\in L^\infty(\R)\cap L^1(\R)$, we have that $F_0\in W^{1,\infty}(\R)$. Using the strong continuity of the heat operator on $W^{1,\infty}(\R)$, we have that
        \begin{align}
            e^{t\dx^2}F_0 \in C([0,T], W^{1,\infty}(\R)).
        \end{align}

Next, because $F\in L^\infty([0,T]\times \R)$, we have that $B(F) \in L^\infty([0,T]\times \R)$. Then, using Bounds (\ref{bd:Heat},\ref{bd:Heatdx}), we obtain that
        \begin{align*}
            &\left\| \int_0^t e^{(t-s)\dx^2} B(F_s)ds\right\|_\infty \leq C t  \left\|  B(F)\right\|_{\infty},\\
            &\left\| \partial_x\int_0^t e^{(t-s)\dx^2} B(F_s)ds\right\|_\infty \leq C \int_0^t \frac{ds}{\sqrt{t-s}}  \left\|  B(F)\right\|_{\infty} = C \sqrt{t}\left\|  B(F)\right\|_{\infty}.
            \end{align*}
        This establishes that $\int_0^t e^{(t-s)\dx^2} B(F_s)ds \in L^\infty([0,T],W^{1,\infty}(\R))$. \\
        Let us now show the continuity in time for all $t\in [0,T]$. We notice that
        \begin{align*}
           & \int_0^{t+h} e^{(t+h-s)\dx^2} B(F_s)ds - \int_0^t e^{(t-s)\dx^2} B(F_s)ds,\\
           =&\int_t^{t+h}e^{(t+h-s)\dx^2} B(F_s)ds +  \left(e^{h\dx^2}-I \right) \int_0^{t} e^{(t-s)\dx^2} B(F_s)ds .
        \end{align*}
        According to the above $\int_0^{t} e^{(t-s)\dx^2} B(F_s)ds \in W^{1,\infty}(\R)$ and by using again the strong continuity of the heat operator on $W^{1,\infty}(\R)$, we have that
        \begin{align*}
            \left(e^{h\dx^2}-I \right) \int_0^{t} e^{(t-s)\dx^2} B(F_s)ds  \overset{W^{1,\infty}(\R)}{\underset{h \to 0}{\longrightarrow}}0.
        \end{align*}
        Moreover, by the same type of bounds then above, we show that
        \begin{align*}
            \left\|\int_t^{t+h}e^{(t+h-s)\dx^2} B(F_s)ds \right\|_{W^{1,\infty}(\R)} \leq C (\sqrt{h}+h)\left\|  B(F)\right\|_{\infty}.
        \end{align*}
        Thus, $\int_0^{t} e^{(t-s)\dx^2} B(F_s)ds \in C([0,T],W^{1,\infty}(\R))$.

Finally, for the remaining term, the argument goes in two steps. We start by showing that $\int_0^t e^{(t-s)\dx^2} \partial_x(-\chi A(F_s))ds \in L^\infty([0,T],C^{0,\alpha}(\R))$ for any $\alpha \in (0,1)$, using Bound (\ref{bd:HeatHolder}):
        \begin{align*}
            &\left\|\int_0^t -\chi\partial_xe^{(t-s)\dx^2}  A(F_s)ds\right\|_{C^{0,\alpha}(\R)} \\
            \leq &C\int_0^t
            \left\| \partial_x e^{(t-s)\dx^2}  A(F_s)\right\|_{C^{0,\alpha}(\R)}  ds \\
            \leq&  C\left\|  A(F)\right\|_{\infty}\int_0^t \left( \frac{1}{\sqrt{t-s}}+ \frac{1}{(t-s)^{\frac{1+\alpha}{2}}}\right)ds\\
            \leq&  C\left\|  A(F)\right\|_{\infty}\int_0^t \frac{ds}{(t-s)^{\frac{1+\alpha}{2}}}, \text{ using }t\leq T <+\infty\\
            \leq&  C\left\|  A(F)\right\|_{\infty}t^{\frac{1-\alpha}{2}}.
        \end{align*}
        Hence $\int_0^t e^{(t-s)\dx^2} \partial_x(-\chi A(F_s))ds \in L^\infty([0,T],C^{0,\alpha}(\R))$ and by the representation formula (\ref{eq:RepForm-F}) and the above that $F \in L^\infty([0,T],C^{0,\alpha}(\R))$. Moreover since $A$ is Lipschitz continuous, we have that $A(F) \in L^\infty([0,T],C^{0,\alpha}(\R))$.
        Then using Bound (\ref{bd:HeatrevHolder}), we have that
        \begin{align*}
            &\left\|\partial_x\int_0^t-\chi \partial_x e^{(t-s)\dx^2}  A(F_s)ds\right\|_\infty\\
            \leq & C\int_0^t \left\| \partial_x^2e^{(t-s)\dx^2} A(F_s)ds\right\|_\infty ds \\
            \leq & C\left\|  A(F)\right\|_{L^\infty([0,T], C^{0,\alpha}(\R))}  \int_0^t \frac{ds}{(t-s)^{1-\frac{\alpha}{2}}}\\
            \leq & C\left\|  A(F)\right\|_{L^\infty([0,T], C^{0,\alpha}(\R))} t^{\frac{\alpha}{2}} .
        \end{align*}
        Thus, $\int_0^t -\chi \partial_xe^{(t-s)\dx^2} A(F_s)ds \in L^\infty([0,T],W^{1,\infty}(\R))$. The continuity in time is established along the same lines than above.

Thus, combining the result on the three terms above, we have that
        \begin{align*}
            F \in C([0,T], W^{1,\infty}(\R)).
        \end{align*}
\begin{lemma}
\label{lem:rep-u}
    Consider $F$ satisfying the assumptions of Proposition \ref{prop:reg} and set $u:=-\partial_x F$. Then $u$ is a weak solution of Equation (\ref{eq:weak-sol_u}) and it satisfies for all $t\in[0,T]$ and almost all $x\in \R$ the following representation formula:
    \begin{align}
\label{eq:RepForm-u}
u_t= e^{t\dx^2}u_0 + \int_0^t e^{(t-s)\dx^2}\left( \dx  \left(\chi \tilde a(F_s)u_s \right) + \tilde b(F_s)u_s \right)ds.
\end{align}
\end{lemma}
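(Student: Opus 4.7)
The plan is to deduce both the weak formulation and the mild formulation for $u$ by ``differentiating'' spatially the corresponding statements already established for $F$. The key technical ingredient is the chain rule for Lipschitz compositions: since $A$ and $B$ are globally Lipschitz with a.e.\ derivatives $A'=\tilde a$ and $B'=\tilde b$, and since $F\in C([0,T],W^{1,\infty}(\R))$ by the first part of Proposition~\ref{prop:reg}, the functions $A(F_s),B(F_s)$ lie in $W^{1,\infty}(\R)$ with
\begin{align*}
\partial_x A(F_s) \,=\, \tilde a(F_s)\,\partial_x F_s \,=\, -\tilde a(F_s)\,u_s,\qquad \partial_x B(F_s) \,=\, -\tilde b(F_s)\,u_s,
\end{align*}
almost everywhere in $x$.

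For the representation formula~(\ref{eq:RepForm-u}), I would apply $-\partial_x$ to both sides of the mild formulation~(\ref{eq:RepForm-F}) provided by Lemma~\ref{lem:rep-F}. Because $\partial_x$ commutes with the heat semigroup $e^{t\partial_x^2}$ on $W^{1,\infty}(\R)$ and $F_0\in W^{1,\infty}(\R)$, the initial datum contributes $e^{t\partial_x^2}(-\partial_x F_0)=e^{t\partial_x^2}u_0$. Inside the time integral, the reaction piece gives $-\partial_x e^{(t-s)\partial_x^2}B(F_s)=-e^{(t-s)\partial_x^2}\partial_x B(F_s)=e^{(t-s)\partial_x^2}(\tilde b(F_s)u_s)$, while the advective piece produces $\chi\,\partial_x^2 e^{(t-s)\partial_x^2}A(F_s)=\chi\, e^{(t-s)\partial_x^2}\partial_x^2 A(F_s) = -e^{(t-s)\partial_x^2}\partial_x\bigl(\chi\tilde a(F_s)u_s\bigr)$, where the chain rule above is used at the last step. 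Summing the three contributions yields~(\ref{eq:RepForm-u}).

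For the weak formulation~(\ref{eq:weak-sol_u}), I would take an arbitrary $\varphi\in C_c^{\infty}([0,T]\times\R)$; extension to $\varphi\in C_c^{1,2}$ follows by density. Since $\partial_x\varphi\in C_c^{\infty}([0,T]\times\R)$, it is itself an admissible test function in the weak formulation~(\ref{eq:weak-sol}) for $F$. Substituting $\partial_x\varphi$ in place of $\varphi$ there and integrating by parts once in $x$ in each of the four resulting space-time integrals transfers the outer derivative off $\partial_x\varphi$ and onto either $F_s$ (producing $-\partial_x F_s=u_s$) or onto $A(F_s),B(F_s)$ (producing $\tilde a(F_s)u_s$ and $\tilde b(F_s)u_s$ via the chain rule). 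The compact support of $\partial_x\varphi$ in $x$ eliminates all boundary contributions, and a direct term-by-term comparison with~(\ref{eq:weak-sol_u}) shows that the identity so obtained is exactly the desired weak formulation for $u$.

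The only delicate point in the whole argument is the justification of the chain rule for $A(F)$ and $B(F)$: since $A$ and $B$ are merely Lipschitz and not $C^1$, the pointwise identities $\partial_x A(F_s)=\tilde a(F_s)\partial_x F_s$ and $\partial_x B(F_s)=\tilde b(F_s)\partial_x F_s$ require either an invocation of Stampacchia's lemma or a mollification argument (smooth approximation of $A$, $B$ by $C^1$ functions with uniform Lipschitz bound, followed by a dominated-convergence passage using $F\in W^{1,\infty}(\R)$). Once this is settled, the remainder is bookkeeping: commutation of $\partial_x$ with $e^{t\partial_x^2}$ and integration by parts.
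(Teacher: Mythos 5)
Your argument is essentially the paper's: the authors likewise obtain \eqref{eq:RepForm-u} by differentiating the mild formula \eqref{eq:RepForm-F} term by term and invoking the chain rule $\partial_x A(F_t)=\tilde a(F_t)\partial_x F_t=-\tilde a(F_t)u_t$, justified by $F_t\in W^{1,\infty}(\R)$; your extra care with the Lipschitz chain rule and your explicit verification of the weak formulation \eqref{eq:weak-sol_u} by testing \eqref{eq:weak-sol} against $\partial_x\varphi$ only make the argument more complete. One remark: your (correct) computation yields $-e^{(t-s)\partial_x^2}\partial_x\left(\chi\tilde a(F_s)u_s\right)$ for the advective contribution, which is the sign consistent with Equation \eqref{eq:limitPDE}, so the plus sign in front of $\partial_x\left(\chi\tilde a(F_s)u_s\right)$ in the displayed formula \eqref{eq:RepForm-u} appears to be a typo in the statement rather than a flaw in your derivation.
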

\begin{proof}[Proof of Lemma~\ref{lem:rep-u}]
The representation formula (\ref{eq:RepForm-u}) for $u$ is an immediate consequence of the representation formula (\ref{eq:RepForm-F}) for $F_t$, where we use the chain rule $\partial_xA(F_t)=\tilde{a}(F_t)\partial_x F_t=-\tilde{a}(F_t)u_t$, which may be applied as $F_t \in W^{1,\infty}(\R)$, according to the above. 
\end{proof}
Now, let us show the remaining regularity result on $u$.

As a mere restatement of the fact that $F\in C([0,T],W^{1,\infty}(\R))$, we have that $u\in C([0,T], L^\infty(\R))$.

    Finally, since $F\in L^\infty([0,T]\times\R)$ and the fact that $u\geq 0$, we have that
    \begin{align}
        \|u_t\|_1 = \|F_t\|_\infty,
    \end{align}
    and thus that $u\in L^\infty([0,T],L^1(\R))$. The continuity of the mapping $t\mapsto u_t$ is then a consequence of the representation formula (\ref{eq:RepForm-u}) and similar arguments than the ones for the continuity in time of $F$. By interpolation, we may hence conclude that for all $p\in[1,\infty]$,
    \begin{align}
        u\in C([0,T], L^p(\R)).
    \end{align}
\end{proof}

\subsection{Uniqueness: Proof of Theorem \ref{thm:uniqueness}.}
\label{sec-uniqueness-unique}

Given the regularity result of Proposition \ref{prop:reg}, we may now move to the proof of Theorem~\ref{thm:uniqueness}.

\begin{proof}[Proof of Theorem~\ref{thm:uniqueness}]
    Suppose that there exists two solutions $F_{1},F_{2}$ satisfying Equation \ref{eq:limitPDE-F}. Then by the above, we may consider $u_{1}=-\partial_x F_{1},u_{2}=-\partial_x F_{2,t}$, which both satisfy the representation formula (\ref{eq:RepForm-u}). Let us denote $\bar{x}_1(t), \bar{x}_2(t)$ the thresholds, \textit{i.e.} such that $F_{1,t}(\bar{x}_1(t)), F_{2,t}(\bar{x}_2(t))=1$.

    \textbf{Step a):} There exists $T>0$, such that for all $t\in [0,T], \bar{x}_1(t), \bar{x}_2(t) \in [\bar{x}(0)-\eta, \bar{x}(0)+ \eta]$.
    
    The linear functional $f \in L^1(\R)\mapsto \int_{\bar{x}(0)-\eta}f(x)dx$ (resp. $f \in L^1(\R)\mapsto \int_{\bar{x}(0)+\eta}f(x)dx$) is continuous. Furthermore, according to Proposition \ref{prop:reg}, $u_{i}\in C([0,T], L^1(\R))$, thus the maps $t\mapsto F_{i,t}(\bar{x}(0)-\eta)$ (resp. $t\mapsto F_{i,t}(\bar{x}(0)-\eta)$) are continuous. But, since $F_{i,0}(\bar{x}(0)-\eta)>1$ (resp. $F_{i,0}(\bar{x}(0)+\eta)<1$), there exists $T>0$ such that for all $t\in [0,T]$, $F_{i,t}(\bar{x}(0)-\eta)>1$ (resp. $F_{i,t}(\bar{x}(0)+\eta)<1$). Thus for all $t\in [0,T]$, $\bar{x}_i(t) \in [\bar{x}(0)-\eta, \bar{x}(0)+ \eta]$

    \textbf{Step b): }There exists $T>0$, such that for all $t\in [0,T], \inf_{x \in [\bar{x}(0)-\eta, \bar{x}(0)+ \eta]} u_{i,t}(x)\geq \frac{\underline{u}}{2}$.

    Using Proposition \ref{prop:reg}, we have that $u_{i}\in C([0,T], L^\infty(\R))$. Furthermore, by assumption $\inf_{x \in [\bar{x}(0)-\eta, \bar{x}(0)+ \eta]}u_0(x)\geq \underline{u}$. Hence there exists $T>0$ such that for all $t\in [0,T]$, $ \inf_{x \in [\bar{x}(0)-\eta, \bar{x}(0)+ \eta]} u_{i,t}(x)\geq \frac{\underline{u}}{2}$.

    \textbf{Step c): } For all $t\in [0,T]$,  $\frac{\underline{u}}{2}|x_1(t)-x_2(t)| \leq  \norm{u_{2,t}-u_{1,t}}_1$.

    Suppose that for a fixed $t\in [0,T], \bar{x}_1(t)\leq \bar{x}_2(t)$, then
    $$\int_{\bar{x}_1(t)}^{\bar{x}_2(t)}u_{1,t}(x)dx\geq \inf_{x \in [\bar{x}(0)-\eta, \bar{x}(0)+ \eta]} u_{i,t}(x)|x_1(t)-x_2(t)|\geq \frac{\underline{u}}{2}|x_1(t)-x_2(t)|, 
    $$
    where we have used the fact that $\bar{x}_i(t)\in [\bar{x}(0)-\eta, \bar{x}(0)+ \eta]$, according to Step a). On the other hand, we have that
    \begin{align*}
        &\int_{\bar{x}_1(t)}^{\bar{x}_2(t)}u_{1,t}(x)dx\\
        =& \int_{\bar{x}_1(t)}^{+\infty}u_{1,t}(x)dx-\int_{\bar{x}_2(t)}^{+\infty}u_{2,t}(x)dx+\int_{\bar{x}_2(t)}^{+\infty}u_{2,t}(x)dx\int_{\bar{x}_2(t)}^{+\infty}u_{1,t}(x)dx\\
        =& 1- 1 +\int_{\bar{x}_2(t)}^{+\infty}(u_{2,t}(x)-u_{1,t}(x))dx\\
        \leq& \norm{u_{2,t}-u_{1,t}}_1.
    \end{align*}
    Using the same argument when $\bar{x}_1(t)>\bar{x}_2(t)$, we obtain that
    \begin{align}
    \label{bd:thresholdL1}
        \frac{\underline{u}}{2}|x_1(t)-x_2(t)| \leq  \norm{u_{2,t}-u_{1,t}}_1.
    \end{align}

    \textbf{Step d): } $\sup_{t\in [0,T]}\norm{u_{2,t}-u_{1,t}}_1 \leq C\sqrt{T}\sup_{t\in [0,T]}|x_1(t)-x_2(t)| .$\\
    
    Using the representation formula (\ref{eq:RepForm-u}) and the estimates (\ref{bd:Heat},\ref{bd:Heatdx}), we have that
    \begin{align}
        &\norm{u_{2,t}-u_{1,t}}_1\\
        \leq & C\int_0^t\left( \frac{1}{\sqrt{t}}+1\right)\left( \norm{\tilde a(F_{1,s})u_{1,s}-\tilde a(F_{2,s})u_{2,s}}_1 + \norm{\tilde b(F_{1,s})u_{1,s}-\tilde b(F_{2,s})u_{2,s}}_1\right)ds
    \end{align}
    Furthermore,
    \begin{align*}
        &\norm{\tilde a(F_{1,s})u_{1,s}-\tilde a(F_{2,s})u_{2,s}}_1 \\ \leq &\norm{\tilde a(F_{1,s})-\tilde a(F_{2,s})}_1\norm{u_{1,s}}_\infty + \norm{\tilde a(F_{2,s})}_\infty \norm{u_{1,s}-u_{2,s}}_1 \\  
        \leq  & |x_1(s)-x_2(s)|\norm{u_{1,s}}_\infty + \norm{u_{1,s}-u_{2,s}}_1.
    \end{align*}
    Similarly,
        \begin{align*}
        &\norm{\tilde b(F_{1,s})u_{1,s}-\tilde b(F_{2,s})u_{2,s}}_1 
        \leq  |x_1(s)-x_2(s)|\norm{u_{1,s}}_\infty + \norm{u_{1,s}-u_{2,s}}_1.
    \end{align*}
    Hence,
    \begin{align*}
        &\norm{u_{2,t}-u_{1,t}}_1\\
        \leq &  C\int_0^t\left( \frac{1}{\sqrt{t}}+1\right)|x_1(s)-x_2(s)|\norm{u_{1,s}}_\infty + \norm{u_{1,s}-u_{2,s}}_1 ds \\
        \leq & C\left(\sup_{t\in [0,T]}\norm{u_{1,t}}_\infty \sup_{t\in [0,T]}|x_1(t)-x_2(t)| + \sup_{t\in [0,T]}\norm{u_{2,t}-u_{1,t}}_{1}\right) (\sqrt{t}+t).
    \end{align*}
    Taking the supremum over $t\in  [0,T]$ and rearranging the terms, we find that up to taking $T>0$ small enough:
    \begin{align*}
        &\sup_{t\in [0,T]}\norm{u_{2,t}-u_{1,t}}_{1} \\
        \leq& \frac{C\sup_{t\in [0,T]}\norm{u_{1,t}}_\infty(\sqrt{T}+T)}{1-C\sup_{t\in [0,T]}\norm{u_{1,t}}_\infty(\sqrt{T}+T)}\sup_{t\in [0,T]}|x_1(t)-x_2(t)|\\
        \leq& C'\sqrt{T}\sup_{t\in [0,T]}|x_1(t)-x_2(t)|.
    \end{align*}

    \textbf{Step e): } Uniqueness on an interval $[0,T]$.
    
    Combining this latter bound with bound (\ref{bd:thresholdL1}), we find that
    \begin{align*}
        \sup_{t\in [0,T]}|x_1(t)-x_2(t)|\leq \frac{2C'\sqrt{T} }{\underline{u}}\sup_{t\in [0,T]}|x_1(t)-x_2(t)|.
    \end{align*}
    Thus for $T>0$ small enough, we obtain that
    \begin{align*}
        \sup_{t\in [0,T]}|x_1(t)-x_2(t)| = 0.
    \end{align*}
    This establishes uniqueness of the thresholds $x_i(t)$ on the interval $[0,T]$ and thus also equality of the functions $u_{1,t}\equiv u_{2,t}$ on the interval $[0,T]$. 

    \textbf{Step f):} Uniqueness remains true for all time. 

    Finally by a maximum principle argument, we know that $u_t(x)>0$ for all $(t,x)\in(0,+\infty)\times \R$ and thus the assumptions of the Theorem at initial time $t=0$ are also satisfied for later times $t>0$. Thus the preceding argument can be repeated for all time, which establishes uniqueness of the solution.
\end{proof}

\section{Speed of Propagation and Ancestral Lineages: Numerical investigations}
\label{sec-numerics}

In this section, we present numerical investigations of the model. First, we start by recalling propagation properties of the deterministic model that are studied in \cite{demircigilhenderson}. Then, we investigate numerically propagation properties of the finite population stochastic model and compare them to the deterministic model.
Finally, we illustrate numerically a dichotomy between pulled and pushed waves through the lens of the ancestral lineage methodology that has recently been proposed independently in \cite{calvez2022,forien2022}.

The code used for the present numerical simulations as well as the generated data is publicly accessible \cite{PLMLAB}. 

\subsection{A Brief Overview of Properties of the Deterministic Model}

The deterministic model  has been investigated in \cite{demircigilhenderson}, as well as in \cite{demircigil2022} for an analogous model, and we recall some results.  

\begin{theorem}
\label{thm-tw}
There exists a minimal speed $\s^*$, such that there  exists a bounded and nonnegative traveling wave profile $u^\s(z)$, \textit{i.e.} $u_t(x)=u^\s(x-\s t)$ is a solution to Equation (\ref{eq:limitPDE}), if and only if $\s\geq \s^*$. Given $\s\geq \s^*$, the traveling wave profile $u^\s$ is unique up to translation. Moreover, the exact value of $\s^*$ is given by:
\begin{align}
\label{sigma-formula}
\s^* = \left\{\begin{array}{ll}
\chi+\frac{1}{\chi} & \text{ if }\chi>1 \\
2 & \text{ if } \chi\leq 1
\end{array}
\right. .
\end{align}
For $\chi>1$:
\begin{align}
    u^{\s^*}(z) = \left\{
\begin{array}{cc}
    \chi e^{-\chi z} & z>0 \\
    \chi & z\leq 0
\end{array}
    \right.
\end{align}
For $\chi\leq1$:
\begin{align}
    u^{\s^*}(z) = \frac{1}{2-\chi}\cdot  \left\{
\begin{array}{cc}
    ((1-\chi)z+1) e^{- z} & z>0 \\
    1 & z\leq 0
\end{array}
    \right.
\end{align}
\end{theorem}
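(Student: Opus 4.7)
The plan is to substitute the ansatz $u(t,x) = u^{\s}(x - \s t)$ into Equation~(\ref{eq:limitPDE}), exploit the piecewise-constant structure of $\tilde a$ and $\tilde b$, and match explicit linear-ODE solutions across the interface where $F^{\s}(z) := \int_z^{+\infty} u^{\s}(y)\,dy = 1$. By translation invariance this interface may be placed at $z = 0$: on the \textit{Grow} side $z > 0$ one has $\tilde b = 1,\ \tilde a = 0$ and the profile solves $u'' + \s u' + u = 0$; on the \textit{Go} side $z < 0$ one has $\tilde a = 1,\ \tilde b = 0$ and the profile solves $u'' + (\s - \chi) u' = 0$.

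On the right, decaying non-negative solutions require $\s \geq 2$ with real characteristic roots $\lambda_\pm = \tfrac{1}{2}(-\s \pm \sqrt{\s^2 - 4}) \leq 0$, so $u^{\s}(z) = C_+ e^{\lambda_+ z} + C_- e^{\lambda_- z}$ (replaced by $(C_1 + C_2 z) e^{-z}$ in the degenerate case $\s = 2$). On the left, the bounded non-negative solutions reduce, after ruling out the unbounded exponential mode as $z \to -\infty$, to the constant $u^{\s} \equiv A_1$. One then imposes three matching conditions at $z = 0$: continuity of $u^{\s}$, \emph{i.e.}\ $A_1 = C_+ + C_-$; the threshold integral constraint $F^{\s}(0) = -C_+/\lambda_+ - C_-/\lambda_- = 1$; and continuity of the conservation-form flux $j = -u' + \chi u\,\tilde a(F)$, which here reads $-\lambda_+ C_+ - \lambda_- C_- = \chi A_1$. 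Using $\lambda_+ \lambda_- = 1$ and $\lambda_+ + \lambda_- = -\s$, this linear system is solved by Cramer's rule, giving in particular
$$C_+ = \frac{\chi + \lambda_-}{(\lambda_+ - \lambda_-)(\chi - \s)}.$$

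The speed selection then follows from requiring $C_+ \geq 0$, which ensures non-negativity of $u^{\s}$ at $+\infty$, where the slower mode $e^{\lambda_+ z}$ dominates. A direct computation shows that $\chi + \lambda_-$ vanishes precisely at $\s = \chi + 1/\chi$: for $\chi > 1$ this numerator is positive on $[2, \chi + 1/\chi)$ and negative beyond, while the denominator is negative throughout the admissible range, so $C_+ \geq 0$ forces $\s \geq \chi + 1/\chi$; for $\chi \leq 1$ the numerator is already negative at $\s = 2$, so $C_+ > 0$ throughout and only the KPP-type constraint $\s \geq 2$ is binding. This yields $\s^* = \chi + 1/\chi$ in the pushed regime and $\s^* = 2$ in the pulled regime. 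Plugging $\s = \s^*$ back into the system gives $C_+ = 0$, $C_- = \chi$, $A_1 = \chi$ when $\chi > 1$, hence $u^{\s^*}(z) = \chi e^{-\chi z}$ on $\{z > 0\}$; the degenerate analysis at $\s = 2$ gives $C_1 = 1/(2-\chi)$, $C_2 = (1-\chi)/(2-\chi)$ when $\chi \leq 1$. Uniqueness up to translation at each admissible $\s$ follows from the unique solvability of the matching system once the threshold location is fixed.

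The step I expect to be the most delicate is to rule out admissible profiles outside the piecewise ansatz just used, \emph{i.e.}\ to show that $F^{\s}$ crosses the value $1$ exactly once so that there are only the two regions \textit{Go} and \textit{Grow}. Monotonicity of $u^{\s}$ combined with the limits $u^{\s}(-\infty) = A_1 > 0$ and $u^{\s}(+\infty) = 0$ should deliver this, but must be argued carefully given the low regularity induced by the discontinuous coefficients $\tilde a, \tilde b$; one natural route is a phase-plane analysis, or alternatively to invoke the regularity from Proposition~\ref{prop:reg} applied to each traveling wave solution and then argue the structure of the level set $\{F^{\s} = 1\}$ directly.
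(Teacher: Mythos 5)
This theorem is not proved in the paper: it is recalled verbatim from \cite{demircigil2022,demircigilhenderson}, so there is no in-text proof to compare against. Judged on its own terms, your matching strategy is the natural one and your computation is essentially right where it applies: the jump condition coming from continuity of the flux is indeed $u'(0^+)-u'(0^-)=-\chi u(0)$, your linear system and the formula $C_+=\frac{\chi+\lambda_-}{(\lambda_+-\lambda_-)(\chi-\s)}$ are correct, and substituting $\s=\s^*$ reproduces both stated profiles. The single-crossing worry you flag at the end is also less delicate than you fear: since $u^\s\geq 0$, the function $F^\s$ is automatically nonincreasing, so $\{F^\s=1\}$ is an interval and only the degenerate case of a nontrivial interval (on which $u^\s\equiv 0$) needs a short separate argument.

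The genuine gap is in the ``only if'' direction for $\chi\geq 2$, on the sub-range $2\leq\s\leq\chi$ of $[2,\chi+1/\chi)$. There your two key sign claims both fail. First, the left-hand mode $e^{(\chi-\s)z}$ is \emph{bounded} as $z\to-\infty$ when $\s<\chi$ (it decays), so it cannot be ``ruled out as unbounded'': the profile on $z<0$ is a two-parameter family $A_1+A_2e^{(\chi-\s)z}$ and the matching system is underdetermined. Second, even setting $A_2=0$, the denominator $(\lambda_+-\lambda_-)(\chi-\s)$ is \emph{positive} for $\s<\chi$ (you assert it is negative ``throughout the admissible range''), and the numerator $\chi+\lambda_-$ is also positive there, so $C_+>0$ and your claimed contradiction evaporates. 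For example, with $\chi=3$, $\s=2.5$ one gets $\lambda_+=-\tfrac12$, $\lambda_-=-2$, $C_+=\tfrac43>0$, $C_-=-\tfrac{10}{3}$. The correct obstruction in this range is different: solving the (underdetermined) system one finds $u(0)=C_++C_-=A_2-\tfrac1{\chi-\s}$ and hence $A_1=u(-\infty)=-\tfrac{1}{\chi-\s}<0$ independently of the free parameter $A_2$, so no nonnegative bounded profile exists; the boundary case $\s=\chi$ makes the system inconsistent outright. You need to add this case split (and, for $1<\chi<2$ at $\s=2$, run the degenerate $(C_1+C_2z)e^{-z}$ analysis, which shows $C_2=(1-\chi)C_1<0$ so the profile changes sign) before the ``if and only if'' statement is fully established.
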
  

\begin{theorem}
\label{thm-propagation}
Let $u_t$ be a solution of Equation (\ref{eq:limitPDE}) with initial condition rapidly decreasing at $x=+\infty$, \textit{i.e.} there exists $\alpha>\max\{\chi,1\}$ and $C>0$, such that
\begin{align*}
    u_0(x)\leq C e^{-\alpha x}.
\end{align*}
Let $\bar{x}(t)$ be the position of the threshold defined by
$$ \int_{\bar{x}(t)}^{+\infty} u_t(x)dx = 1,
$$
\textit{i.e.} the large population equivalent of the position of the $K$-th particle. The following  behavior holds for $\bar{x}(t)$:
For $\chi>1$,
\begin{align}
    \label{propagation-chilarge}
    \bar{x}(t) = \sigma^* t+ \mathcal{O}(1).
\end{align}
For $\chi<1$,
\begin{align}
    \label{propagation-chismall}
    \bar{x}(t) = \sigma^* t-\frac{3}{2}\ln(t)+ \mathcal{O}(1).
\end{align}
For $\chi=1$,
\begin{align}
    \label{propagation-chi1}
    \bar{x}(t) = \sigma^* t-\frac{1}{2}\ln(t)+ \mathcal{O}(1).
\end{align}
\end{theorem}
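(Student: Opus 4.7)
The plan hinges on a structural observation about Equation (\ref{eq:limitPDE}): on the set $\{x : F_t(x) < 1\}$, ahead of the threshold $\bar x(t)$, one has $\tilde a(F_t) = 0$ and $\tilde b(F_t) = 1$, so $u$ satisfies the linear reaction-diffusion equation $\partial_t u = \partial_x^2 u + u$; on $\{x : F_t(x) > 1\}$, behind the threshold, $u$ satisfies the pure transport-diffusion equation $\partial_t u = \partial_x^2 u - \chi \partial_x u$. The whole model is thus a free-boundary problem whose moving interface $\bar x(t)$ is determined implicitly by $F_t(\bar x(t)) = 1$. I would first use Proposition \ref{prop:reg} together with a maximum-principle argument to ensure that $\bar x(\cdot)$ is continuous and that $u$ is continuous (and positive near the interface) across the free boundary, so that comparison principles can be applied separately on each side.

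For the pushed regime $\chi > 1$, I would construct explicit super- and sub-solutions by translating the traveling-wave profile of Theorem \ref{thm-tw}. Ahead of the interface take $C_{\pm} \exp(-\chi(x - \sigma^\ast t - a_\pm))$, which solves the linearization ahead because $\sigma^\ast \chi - \chi^2 - 1 = 0$, matched to the constant value $\chi$ on the left. The assumed initial exponential bound $u_0(x) \leq C e^{-\alpha x}$ with $\alpha > \chi$ pins $a_+$ so that the super-solution dominates at $t=0$; the lower shift $a_-$ comes from spreading the initial mass forward using the transport-diffusion equation behind the interface. A parabolic comparison principle applied on each side, together with continuity of $u$ and of $F$ at $\bar x(t)$, then yields $\sigma^\ast t - O(1) \leq \bar x(t) \leq \sigma^\ast t + O(1)$.

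For the regime $\chi \leq 1$, the characteristic equation $\mu^2 - 2\mu + 1 = 0$ of the linearized problem ahead has the double root $\mu = 1$, so pure exponentials no longer pin the front location and a logarithmic correction must appear. I would follow the Bramson strategy: substitute $u_t(x) = e^{-(x - 2t)} v_t(x - 2t)$ on $\{x > \bar x(t)\}$ so that $v$ satisfies a pure heat equation there, and translate the coupling at the interface into an absorbing-type boundary condition for $v$ at the moving point $z(t) = \bar x(t) - 2t$. Matching the mass constraint $\int_{\bar x(t)}^\infty u_t = 1$ with the Gaussian tail of $v$ under this absorbing boundary produces the classical shift $z(t) = -\frac{3}{2}\log t + O(1)$ when $\chi < 1$, by the same mechanism that governs the KPP equation. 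At the critical value $\chi = 1$ the explicit profile $u^{\sigma^\ast}(z) = e^{-z}$ has no polynomial prefactor, so the wave sits between a pulled double-root structure and a pushed simple-root structure; the drift $-\chi\partial_x u$ in the bulk cancels half of the absorbing contribution and only $-\frac{1}{2}\log t$ of the shift survives, consistently with the \textit{pushmi-pullyu} analysis of \cite{an2021}.

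The main obstacle will be making the logarithmic lower bounds rigorous. The upper bounds on $\bar x(t)$ follow from straightforward comparison with solutions of the linearized equation ahead of the front, since the saturation $\tilde b(F) \leq 1$ dominates the nonlinearity there. The lower bounds, however, require propagating positivity from the bulk across the interface into the leading edge while respecting both the mass constraint and the jump of drift across $\bar x(t)$. Isolating the precise constants $\tfrac{3}{2}$ versus $\tfrac{1}{2}$ demands delicate sub-solutions built as truncated Gaussians multiplied by $e^{-(x - 2t)}$ and, in the subcritical case $\chi<1$, by the polynomial factor $(1-\chi)z + 1$; controlling the error introduced by the coupling with the free boundary, and verifying that the constructed sub-solution satisfies the integral constraint $F_t(\bar x(t)) = 1$, is the technically heaviest point of the argument.
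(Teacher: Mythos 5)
First, a point of context: the paper does not prove Theorem~\ref{thm-propagation} at all --- it is explicitly recalled from \cite{demircigilhenderson} (and the analogous model in \cite{demircigil2022}), so there is no in-paper proof to compare against. Your proposal must therefore stand on its own, and as written it is an outline rather than a proof: you correctly identify the free-boundary structure (ahead of $\bar x(t)$ one has $F<1$, hence $\tilde a(F)=0$, $\tilde b(F)=1$ and $\partial_t u=\partial_x^2 u+u$; behind, $\partial_t u=\partial_x^2 u-\chi\partial_x u$), the role of the dispersion relation $\sigma^*\chi=\chi^2+1$ in the pushed case, and the Bramson substitution $u=e^{-(x-2t)}v$ reducing the leading-edge equation to a heat equation. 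This is the right skeleton and matches the known analysis. But the two steps that carry all the difficulty are deferred, and one of them is framed in a way that would not survive being made rigorous.

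The first gap is the comparison principle itself. You propose to apply parabolic comparison ``separately on each side'' of $\bar x(t)$, but $\bar x(t)$ is an unknown of the problem: you cannot compare on a domain whose boundary is defined by the solution you are trying to bound without a genuinely two-sided argument, and the equation for $u$ has a drift coefficient $\tilde a(F_t(x))$ that is discontinuous in $x$ and nonlocal in $u$, so no off-the-shelf maximum principle applies. The natural fix --- and the route taken in the cited works --- is to work at the level of $F$, whose equation (\ref{eq:limitPDE-F}) is a scalar parabolic equation with Lipschitz nonlinearities $A$ and $B$ and hence admits an honest comparison principle; super- and sub-solutions should be built for $F$, not for $u$. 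The second gap is the derivation of the constants $\tfrac32$ and $\tfrac12$. For $\chi<1$ the absorbing-boundary heuristic is standard but its rigorous implementation (controlling the error between the true front condition $\int_{\bar x(t)}^\infty u_t=1$ and the Dirichlet problem for $v$, and closing the self-consistency loop for $z(t)$) is precisely the content of the Bramson-type analysis and cannot be waved through. For $\chi=1$ the statement that ``the drift cancels half of the absorbing contribution'' is not a mechanism: the $-\tfrac12\ln t$ shift in the pushmi-pullyu case comes from the borderline (simple-root but critically decaying) structure of the profile $u^{\sigma^*}(z)=e^{-z}$ and the resulting non-integrability of the weighted eigenfunction, as analyzed in \cite{an2021}; your heuristic would need to be replaced by that argument. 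As it stands the proposal is a plausible roadmap that honestly flags its own missing pieces, but those pieces are the theorem.
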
   

The logarithmic correction term in (\ref{propagation-chismall},\ref{propagation-chi1}) has first been established for the Fisher/ Kolmogorov-Petrovsky-Piskunov model \cite{bramson1978} and is often referred to as the Bramson shift.

The dichotomy on the existence of the Bramson shift (depending on whether $\chi>1$ or $\chi\leq 1$) is tied to a dichotomy of traveling waves known as \textit{pushed} or \textit{pulled} waves, introduced in \cite{stokes1976}. 
Following this latter work a pulled wave is one where $\s^*=2$ and the dynamics of the wave is dictated by the linearization of the model at $x=+\infty$. A pushed wave is one where $\s^*>2$, \textit{i.e.} the wave is traveling faster than the dynamics of the linearizaton at $x=+\infty$. Qualitatively, in the pulled case, the wave is solely driven by growth and diffusion at the leading edge of the front with negligible contribution from the overall population, whilst in the pushed case, the wave is subject to a significant contribution from the overall population to the
net propagation. 

The study of pulled and pushed waves in \cite{demircigil2022}, following the methodology of neutral fractions proposed in \cite{roques2012,garnier2012}, applies exactly to the present model. A neutral fraction $\nu_t $ describes the contribution of a subpart of the traveling wave. It corresponds to labeling particles in the traveling wave with a label that does not interfere with the dynamics and that is conserved by both particles when the particle branches.  Mathematically for the considered model, a neutral fraction $\nu_t$ corresponds to a solution, in the moving frame prescribed by the traveling wave solution, \textit{i.e.} $(t,z)=(t,x-\s t)$, to the parabolic equation:
\begin{align}
    \label{eq:neutralfraction}
    &\dt \nu_t + L \nu_t =0,\text{ where }\\
    &L:=-\dzz -\beta(z)\dz\text{ and }\beta(z)=\sigma-\chi \mathbbm{1}_{z<0}+2\frac{\dz u^{\s}}{u^{\s}}.
    \label{def:beta}
\end{align}
We can observe that $\nu_t$, the solution to Equation (\ref{eq:neutralfraction}) with initial datum $\nu_0\equiv 1$, which corresponds initially to the whole of the traveling wave, remains constant $\nu_t \equiv 1$ (as $L1\equiv 0$), which is consistent with the fact that the whole of the traveling wave maintains its own propagation. 

In the pulled case $\chi\leq 1$, any neutral fraction $\nu(t)$ that initially is not constituted of a substantial part of the leading edge, for instance such that $\nu_0(z)z^2$ is integrable at $z=+\infty$ (excluding thereby the trivial case $\nu_0\equiv 1)$, will converge to 0 and go extinct. Hence only particles (and their offspring) that are at the very leading edge of the traveling wave may not go extinct in the traveling wave. In the pushed case $\chi>1$, any neutral fraction $\nu(t)$ converges exponentially to a constant in an appropriate $L^2$-setting. Hence any particle (and not just the ones at the leading edge) will remain part of the wave and contribute to its propagation.

Of note, the limit case $\chi=1$ is a pulled wave according to the above, but has some specific properties such as a distinct factor for the Bramson shift (\ref{propagation-chi1}) and a further difference will be illustrated below.
Therefore following \cite{an2021}, we will refer to this case as a \textit{pushmi-pullyu} wave. 

\subsection{Numerical Investigations of the Propagation Speed}

In this subsection, we provide numerical evidence that corroborates Conjecture \ref{conjecture:TW}, whose claims we recall: 
\begin{enumerate}
    \item Linear spreading in the finite population regime. There exists a constant propagation speed $\s^K$ such that in an appropriate convergence sense, $\lim_{t\to+\infty} \frac{\xi^K(t)}{t}=\s^K$, where $\xi^K_t=H_K(\mu^K_t)$ is the position of the $K$-th particle at time $t$.
    \item Local convergence to a traveling wave profile in the moving frame. Let $\Kcal\subset \R$ be a compact subset, then $\hat{\mu}^{K}_t:=\mu^K_t(\cdot -\xi^K(t))$ converges in law to a stationary distribution $\hat{\mu}^{K}_{\infty}$ on $\Kcal$.
    \item Convergence of the finite population propagation speed to the deterministic propagation speed.  $\lim_{K\to+\infty} \s^K = \s^*$, where $\sigma^*=\left\{\begin{array}{ll}
    \chi+\frac{1}{\chi} & \text{ if }\chi>1 \\
    2 & \text{ if }\chi\leq 1
\end{array} \right.$.
\end{enumerate}

The algorithm for the numerical simulations follows the rules of the individual based model as described in Section \ref{sec-individualbasedmodel}. Although simulations have shown that the initial condition does not seem to play a role in the long time behavior of the system, we have used the following initial condition for our simulations, which mimics the shape of the traveling waves given by Theorem \ref{thm-tw} restricted to the interval $\left[-\frac{1}{\chi},+\infty\right)$:
\begin{align*}
    (\text{ID}) : \left\{
\begin{array}{l}
   N^K_0 = 2K.    \\
   \text{The positions of the first $K$ particles sample the exponential distribution } \mathcal{E}({\chi}). \\
   \text{The remaining $K$ particles sample the uniform distribution } \mathcal{U}\left(\left[ - \frac{1}{\chi},0\right]\right). 
\end{array}
    \right.
\end{align*}

In the long time behavior the distribution $\mu^K_t$ is reminiscent of the traveling wave profile $u^{\s^*}$ at least in a compact set around the position of the $K$-th particle (see Figure \ref{fig:histogram}), which is consistent with Item 2 of Conjecture \ref{conjecture:TW}. 

Next, we observe that the spreading of the $K$-th particle evolves linearly (see Figure 
\ref{fig:speed:sfig1} for the case $\chi =\frac{1}{2},2$ and $K=1,4096$), which supports Item 1 of Conjecture \ref{conjecture:TW}. To illustrate this, we recall that $\xi^K_t=H_K(\mu^K_t)$ is the position of the $K$-th particle and we plot $\frac{\xi^K_t-\xi^K_{100}}{t-100}$, \textit{i.e.} the spreading speed of the $K$-th particle after $t\geq 100$, which is done in order to avoid any stark effect of the initial condition. This quantity seems to converge to a finite value, indicating a linear spreading. Then, we investigate the evolution of the spreading on the time interval $[100,200]$ as $K$ gets larger (see Figure \ref{fig:speed:sfig2}). For the case $\chi=2$, the spreading speed as $K$ increases almost perfectly matches the wave speed $\s^*$ of the deterministic model given by Formula (\ref{sigma-formula}), corroborating Item 3 of Conjecture \ref{conjecture:TW}. For the case $\chi=\frac{1}{2}$ the spreading speed is slightly slower than $\s^*=2$. Yet, this seems to be consistent with the asymptotic behavior (\ref{propagation-chismall}) of the deterministic model, as the spreading speed over a duration of $100$ is approximately $\frac{\bar{x}(100)}{100}\approx 2-\frac{3}{2}\frac{\ln(100)}{100}\approx 1.93$.

\begin{figure}[t]
\begin{center}
\includegraphics[width=.7\linewidth]{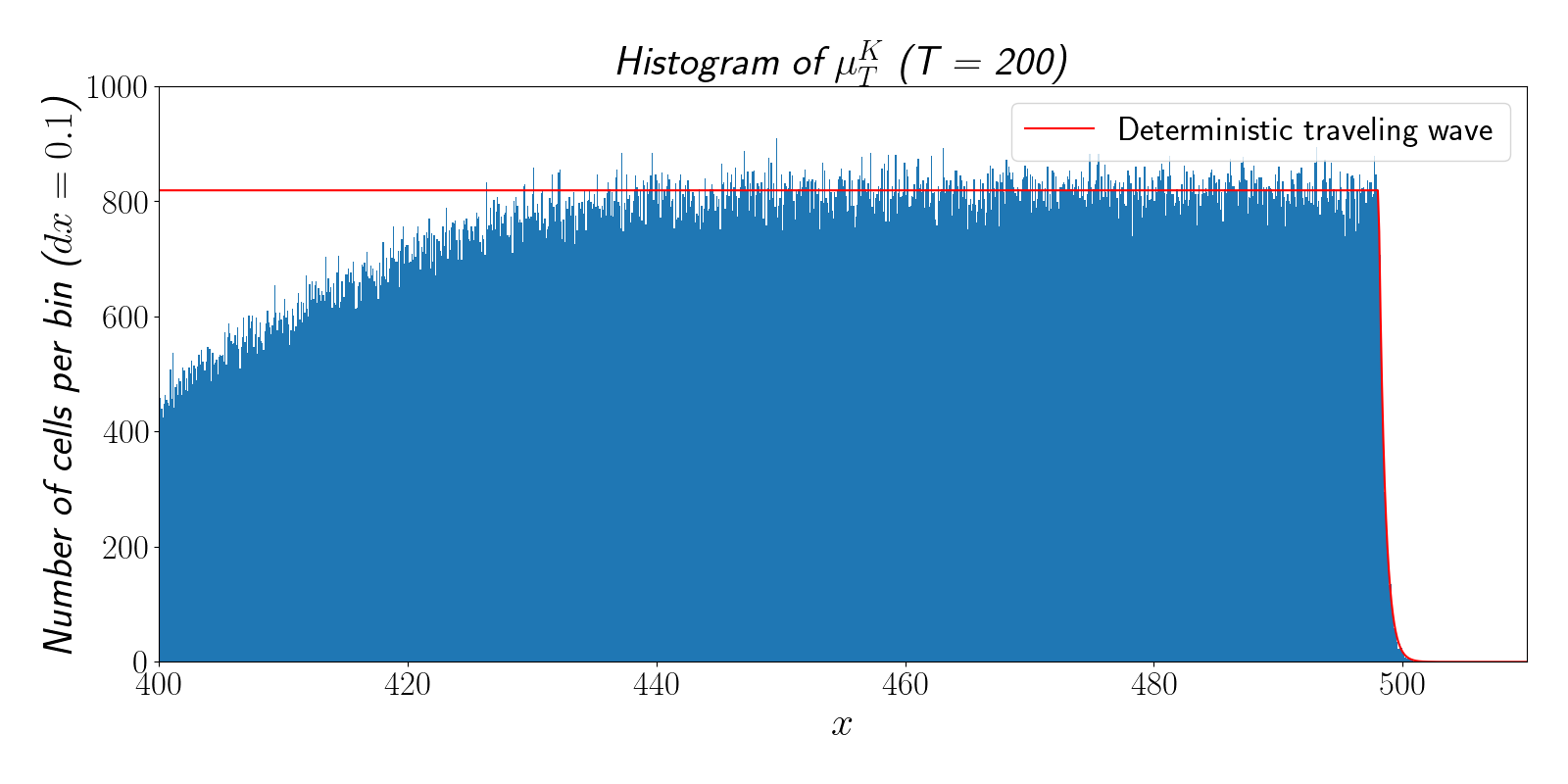}
\caption{Histogram of $\mu_T^K$ for $\chi=2,K=4096,T=200$ and initial data satisfying $(\text{ID})$. The width of a single bin in the histogram is $dx=0.1$. The red curve represents $y=C\left\{ \begin{array}{ll}
e^{-\chi (x-\xi^K_T))}& \text{ if } x>\xi^K_T\\
1 & \text{ if } x\leq \xi^K_T
\end{array} \right.$, where we recall that $\xi^K_T=H_K(\mu_T^K) $ denotes the position of the $K$-th particle at time $T$ and $C$ has been chosen \textit{a priori} with $C:=K\chi dx$, which is consistent with the discretization of the histogram.}
\label{fig:histogram}
\end{center}
\end{figure}

\begin{figure}[b]
\begin{center}
\begin{subfigure}{.5\linewidth}
  \centering
\includegraphics[width=\linewidth]{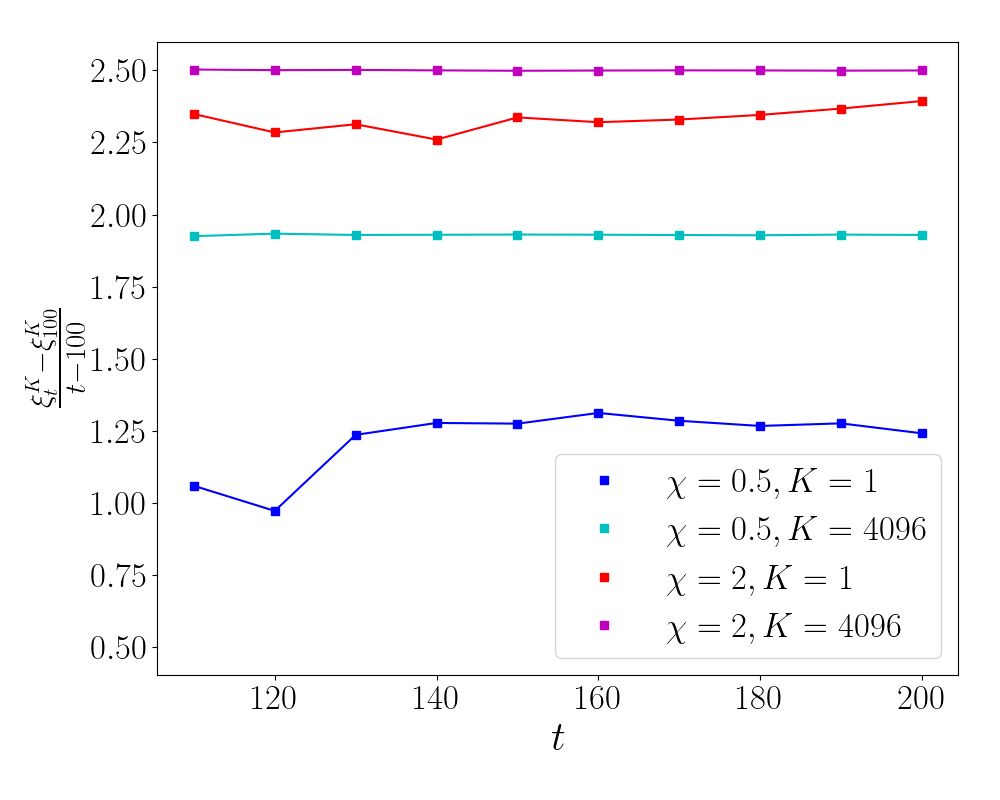}
  \caption{}
  \label{fig:speed:sfig1}
\end{subfigure}%
\begin{subfigure}{.5\linewidth}
  \centering
\includegraphics[width=\linewidth]{figure-speeds.png}
  \caption{}
  \label{fig:speed:sfig2}
\end{subfigure}
\caption{{(a)}: Graphic representation of $\frac{\xi^K_t-\xi^K_{100}}{t-100}$, where $\xi^K_t=H_K(\mu^K_t)$ is the position of the $K$-th particle at time $t$, for $K=4096$, $\chi=2$ (red), $\chi=\frac{1}{2}$ (blue) and initial data satisfying $(\text{ID})$. {(b)}:
Box plot of the spreading speed $\frac{\xi^K_{200}-\xi^K_{100}}{100}$ on the time interval $[100,200]$ with a sample size of $n=20$ for $\chi=2$, $\chi=\frac{1}{2}$, different values of $K$ and initial data satisfying $(\text{ID})$. The red (resp. blue) curve represents the sample mean $\frac{\overline{\xi^K_{200}-\xi^K_{100}}}{100}$ for $\chi=2$ (resp. $\chi=\frac{1}{2}$). The magenta (resp. cyan) curve represents the traveling wave speed $\s^*=\chi+\frac{1}{\chi}$ with $\chi=2$ (resp. $\s^*=2$ with $\chi=\frac{1}{2}$) of the deterministic model.    }
\label{fig:speed}
\end{center}
\end{figure}

\subsection{The Ancestral Lineage Methodology: An Alternative Viewpoint on Pushed and Pulled Waves}

\begin{figure}[t]
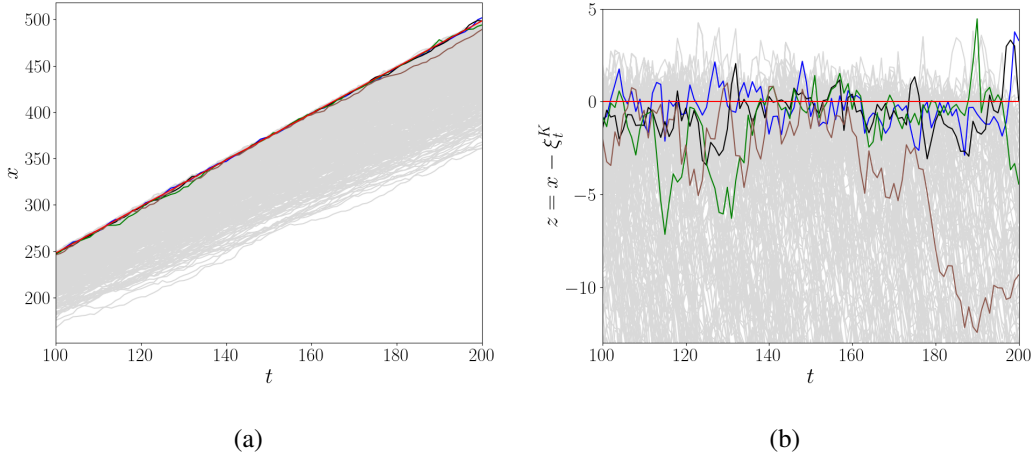

\begin{center}
\begin{subfigure}{.5\linewidth}
  \centering
\includegraphics[width=\linewidth]{figure-trajectories-stationary.png}
  \caption{}
  \label{fig:trajectories:sfig1}
\end{subfigure}%
\begin{subfigure}{.5\linewidth}
  \centering
\includegraphics[width=\linewidth]{figure-trajectories-moving.png}
  \caption{}
  \label{fig:trajectories:sfig2}
\end{subfigure}
\caption{Graphical representation of the particle trajectories for $t \in [100, 200] $, with
$\chi = 2, K=256$ and initial data satisfying $(\text{ID})$. The grey curves represent particle trajectories,
which have been sampled among the population (with $n=516$ among a total population $N^K_T=51633$ at time $T=200$). The red curve represents the position of the $K$-th particle and 4 particle trajectories have been selected arbitrarily to be highlighted for illustration purposes (blue curve corresponds to the particle with rank 1 at time $T=200$, black to the particle with rank $K$, green to the particle with rank $10K$, and brown to the particle with rank $20K$).
Figure (a) represents the
trajectories in the frame $(t, x)$, whereas Figure (b) represents the same trajectories
in the frame $(t,z)=(t,x-\xi^K_t)$.}
\label{fig:trajectories}
\end{center}
\end{figure}

In this Subsection, we apply the methodology of ancestral lineages, proposed independently in \cite{calvez2022,forien2022}, which can be seen as a backward in time analog of the neutral fraction framework described above. 

Roughly speaking, given a particle $i\in V^K_t$ at time $t$, we are interested in the position of its ancestor $Y_{s,t}^{i,K}$ at time $t-s$, for $s\geq0$. $\hat Y_{s,t}^{i,K}$ denotes the position of the ancestral lineage in the frame, which centers at $z=0$ the $K$-th particle, \textit{i.e.} $\hat{Y}_{s,t}^{i,K}=Y_{s,t}^{i,K}-\xi^K_s$. In Figure \ref{fig:trajectories}, such ancestral lineages have been represented in the stationary frame $(t,x)$ and in the moving frame $(t,z)=\left(t,x-\xi^K_t\right)$, centering the $K$-th particle. 

 Suppose that for $K<+\infty$ the distribution of the initial condition $\mu^K_0$ is given by the traveling wave $u^{\s^*}$. Then, in the moving frame $(t,z)=(t,x-\s^* t)$,  the distribution of $\mu^\infty_t$, the large population limit as $K\to +\infty$, is stationary, as $u^{\s^*}$ is a traveling wave solution of Equation (\ref{eq:limitPDE}), \textit{i.e.} a stationary solution in the moving frame $(t,z)=(t,x-\s^* t)$. 
Conjecture \ref{conjecture:AL}, which states analogous results to the results in \cite{calvez2022,forien2022} for the present model, then states that the large population limit $K\to +\infty$ of the ancestral lineage $\hat Y_{s,t}^{K}$ is a solution to the following SDE:
\begin{align}
\label{ancestSDE}
d\hat Y_{s,t} = \beta(\hat Y_{s,t})ds + \sqrt{2}dW_s.
\end{align}
As a consequence, $v(s)$ the probability distribution of $\hat{Y}_{s,t}$ satisfies the PDE:
\begin{align}
\label{ancestPDE}
\partial_s v+ \mathcal{L}v=0,
\end{align}
where $\mathcal{L}:=-\dzz+\dz\left(\beta(z) \cdot\right)$ and $\beta(z)$ is given by (\ref{def:beta}). In other words, the ancestral lineage distribution in the large population limit $K\to +\infty$ can be seen as a dual evolution problem of the neutral fraction evolution (\ref{eq:neutralfraction}). Let us also observe that Equation (\ref{ancestPDE}) is conservative, which is consistent with the fact that each particle has only a single ancestor at a given time.

The big mathematical challenge in applying rigorously the methods of \cite{calvez2022,forien2022} consists in the fact that the traveling wave solution $u^{\s^*}$ is not integrable, unlike in the aforementioned studies. Thus, the large population limit result Theorem \ref{thm:mainresult} established in Section \ref{sec-convergence} needs to be extended to initial distributions $\mu^K_0$ that are not of finite mass. In order to circumvent this issue in the numerical simulations, we conflate the stationary distribution $u^{\s^*}$ with a distribution $\mu^K_t$ that has evolved for sufficiently large $t$ (here $t=100$), so that in the moving frame at least close to the origin $z=0$ the distribution of $\mu^K_t$ is very close to the stationary distribution $u^{\s^*}$. Furthermore, we conflate the moving frame of the traveling wave $(t,z)=(t,x-\s^* t)$ with the moving frame centering the $K$-th particle $(t,z)=(t,x-\xi^K_t)$.

Neglecting these mathematical difficulties and using these approximations, we can then illustrate the dichotomy of pushed and pulled waves via the qualitative behavior of the ancestral lineage distribution given by Equation (\ref{ancestPDE}): The ancestral lineage distribution of a pushed wave converges to an equilibrium, whilst the ancestral lineage distribution of a pulled wave does not and its mean drifts to $z=+\infty$. Qualitatively in the pulled case, the position of the ancestors will in average drift indefinitely towards the leading edge of the traveling wave as $s$ increases, whilst in the pushed case, the distribution of the position of the ancestors converges to an equilibrium in the bulk of the wave.

\begin{figure}[t]
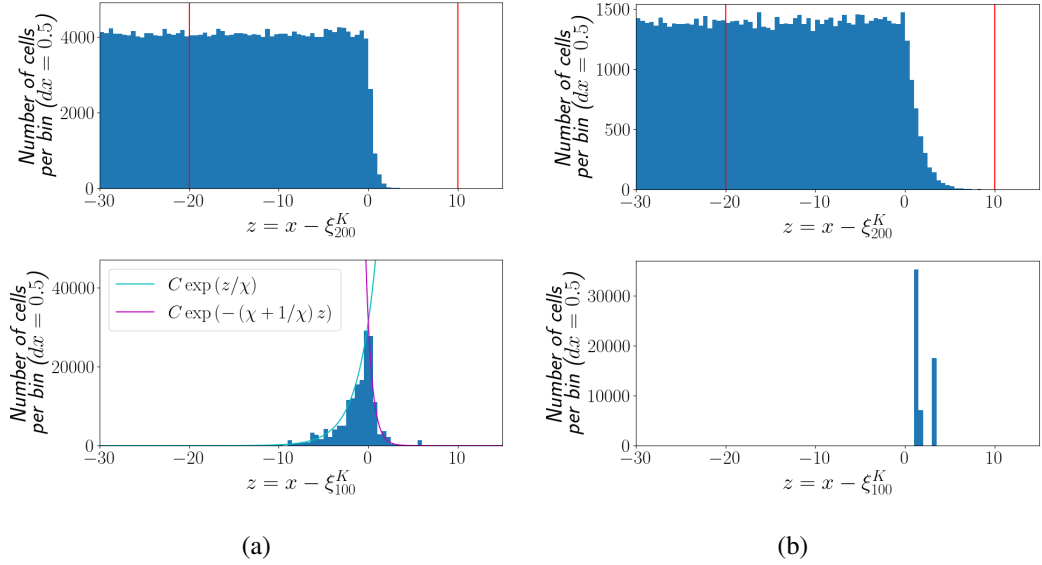

\begin{center}
\begin{subfigure}{.5\linewidth}
  \centering
\includegraphics[width=\linewidth]{figure-backward-distribution-chi2.png}
  \caption{}
  \label{fig:backward-distribution:sfig1}
\end{subfigure}%
\begin{subfigure}{.5\linewidth}
  \centering
\includegraphics[width=\linewidth]{figure-backward-distribution-chi05.png}
  \caption{}
  \label{fig:backward-distribution:sfig2}
\end{subfigure}
\caption{Graphical representations of the ancestral distribution. The top figure represents the histogram of $ \mu_T^K$ in the frame $z=x-\xi^K_T$ for $T=200$, $K=4096$ and initial data satisfying $(\text{ID})$. The two red bars stake out the positions of all the particles selected, which are all the particles whose position is in $[-20,10]$.  The bottom figure represents the ancestral distribution in the frame $(t,x-\xi^K_t)$ at time $t=100$, \textit{i.e.} $s=100$, of the selected particles (which are between red bars in the top figure). (a): The case $\chi=2$. The cyan curve represents $y_1(z)=Ce^{\frac{z}{\chi}}$ and the magenta curve represents $y_2(z)=Ce^{-\left(\chi-\frac{1}{\chi}\right)z}$, where $C$ has been chosen \textit{a priori} with $C = n \frac{\chi^2-1}{\chi^3}dx $, so that $v_\infty\left(z\right)=\frac{1}{ndx}\min\{y_1(z),y_2(z)\}$ (see Equation (\ref{def-vinf})), which is consistent with the discretization of the histogram. The distribution of the ancestors seems to follow the distribution $v_\infty$. The number of selected particles is $n=167686$ and the number of distinct ancestors is $2811$.  {(b)}: The case $\chi=\frac{1}{2}$. The ancestors of the particles all come from the leading edge. The number of selected particles is $n=59927$ and the number of distinct ancestors is $7$.}
\label{fig:backward-distribution}
\end{center}
\end{figure}

\subsubsection{The Pushed Case $\chi>1$.}

In the pushed case $\chi>1$, Equation (\ref{ancestPDE}) has an equilibrium given by:

\begin{align}
\label{def-vinf}
v_\infty(z):=\frac{\chi^2-1}{\chi^3}\exp\left(\int_0^z \beta(y)dy\right) =\frac{\chi^2-1}{\chi^3}\exp\left(
\left\{\begin{array}{ll}
\frac{z}{\chi} & \text{ if }z\leq0\\
-\left(\chi-\frac{1}{\chi}\right)z & \text{ if } z>0
\end{array}
\right.
\right)
\end{align}

Following the work \cite{demircigil2022}, it can be shown that the probability distribution of the ancestral distribution $v(t)$ converges exponentially to the equilibrium $v_\infty$. The convergence to the equilibrium is illustrated in Figure \ref{fig:backward-distribution:sfig1}: We consider all particles that at time $t=200$ are located in the interval $[-20,10]$ and plot the histogram of their ancestors position $\hat{Y}^{i,K}_{100,200}$, which closely follows the predicted shape prescribed by $v_\infty$.

\subsubsection{The Pulled Case $\chi\leq 1$.}

In the pulled case $\chi\leq 1$, the stationary solution of Equation (\ref{ancestPDE}) is given by:
\begin{align}
v_\infty(z) := \left\{ 
\begin{array}{ll}
e^{(2-\chi)z} & \text{ if } z<0 \\
\left( (1-\chi)z+1\right)^2 & \text{ if } z\geq 0
\end{array}
\right. .
\end{align}
We immediately observe that $v_\infty\notin L^1(\R)$. Hence, as Equation (\ref{ancestPDE}) is conservative, the probability distribution of the ancestral lineage $v$ cannot converge to the stationary solution $v_\infty$. 
Furthermore, in Figure \ref{fig:backward-distribution:sfig2}, we see the stark difference with the pushed case: Considering again all particles that at time $t=200$ are located in the interval $[-20,10]$ and plotting the histogram of their ancestors position $\hat{Y}^{i,K}_{100,200}$, the distribution does not seem to converge to an equilibrium. Furthermore, we can observe that all the ancestors are to the right of the $K$-th particle, which again illustrates that the pulled wave is driven by the particles at the leading edge of the wave. 

This latter behavior is confirmed by the behavior of the solution $v(s)$ to Equation (\ref{ancestPDE}) (see Figure \ref{fig:v-pulled:sfig1}): $v$ behaves like a bell curve that drifts to the right.

Finally, let us observe that in the \textit{pushmi-pullyu} case $\chi=1$, we observe a qualitative difference to the case $\chi<1$ (see Figure \ref{fig:v-pulled:sfig2}): Whilst $v(s)$ does not converge to an equilibrium and its first moment drifts clearly to the right as well, which is consistent with the pulled regime, the curve of $v(s)$ has a maximum at $z=0$, which it shares with the pushed regime, and therefore, compared to the $\chi<1$ case, many more of its ancestors will remain in the bulk of the traveling wave.

\begin{figure}[t]
\begin{center}
\begin{subfigure}{.5\linewidth}
  \centering
\includegraphics[width=\linewidth]{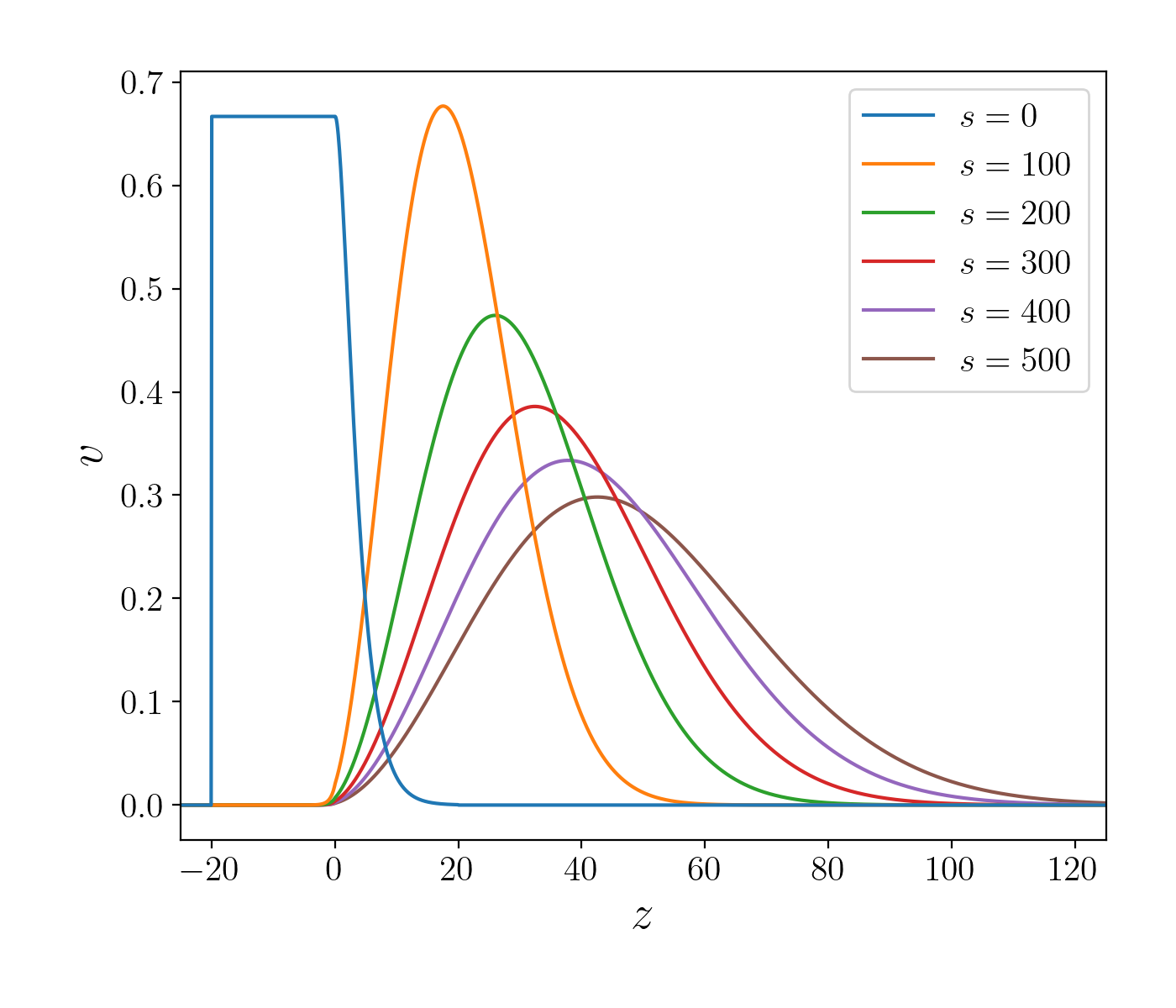}
  \caption{}
  \label{fig:v-pulled:sfig1}
\end{subfigure}%
\begin{subfigure}{.5\linewidth}
  \centering
\includegraphics[width=\linewidth]{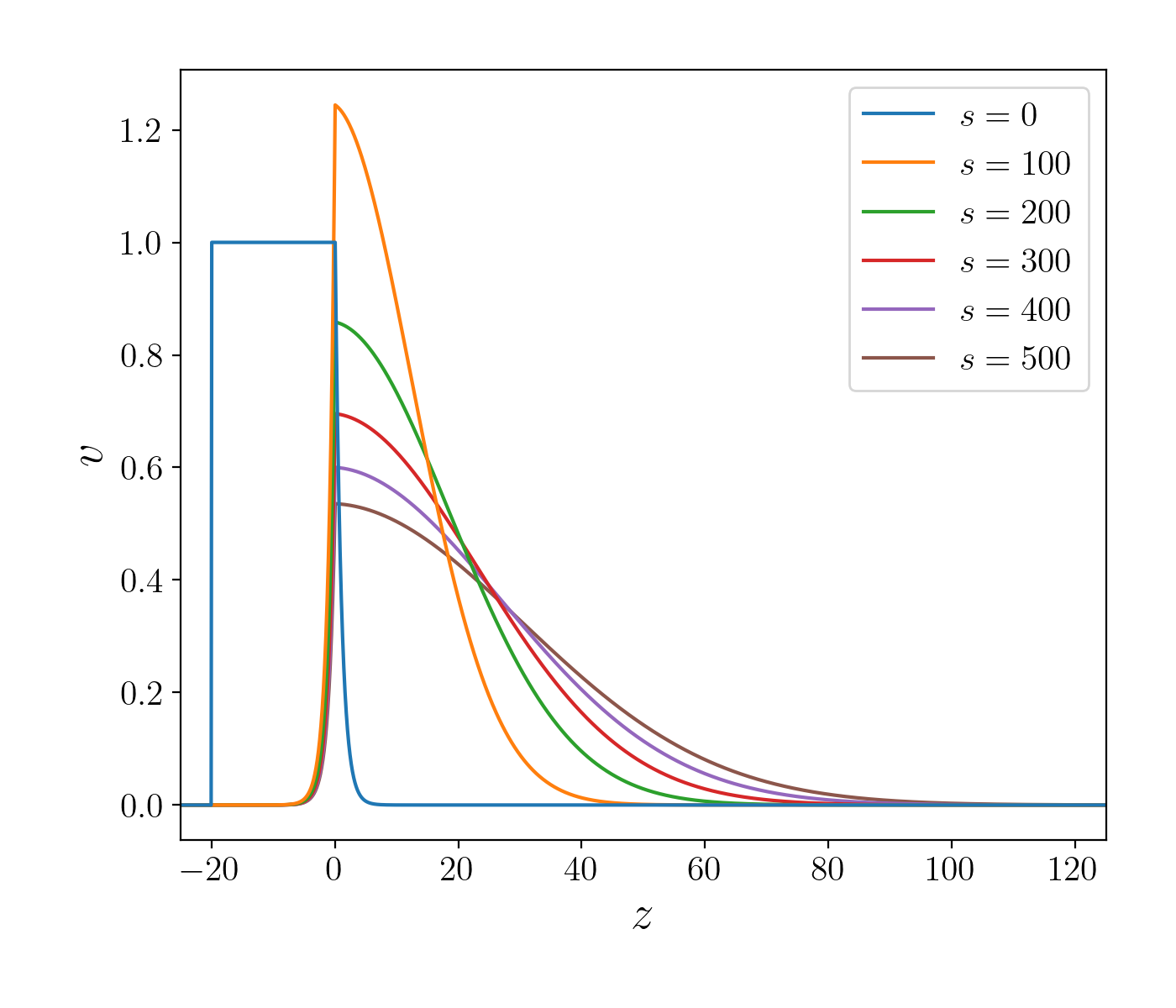}
  \caption{}
  \label{fig:v-pulled:sfig2}
\end{subfigure}
\caption{Solution $v$ of Equation (\ref{ancestPDE}), when $\chi=\frac{1}{2}$ (a) and $\chi=1$ (b), with inital datum $v^0(z)=u^{\sigma^*}(z)\mathbbm{1}_{[-20,10]}(z)$, which corresponds to the distribution of the cells that are staked oud in Figure \ref{fig:backward-distribution}. }
\label{fig:v-pulled}
\end{center}
\end{figure}

%%%%%%%%%%%%%%%%%%%%%%%%%%%%%%%%%%%%%%%%%%%%%%
%% Single Appendix:                         %%
%%%%%%%%%%%%%%%%%%%%%%%%%%%%%%%%%%%%%%%%%%%%%%
%\begin{appendix}
%\section*{???}%% if no title is needed, leave empty \section*{}.
%\end{appendix}
%%%%%%%%%%%%%%%%%%%%%%%%%%%%%%%%%%%%%%%%%%%%%%
%% Multiple Appendixes:                     %%
%%%%%%%%%%%%%%%%%%%%%%%%%%%%%%%%%%%%%%%%%%%%%%
%\begin{appendix}
%\section{???}
%
%\section{???}
%
%\end{appendix}

%%%%%%%%%%%%%%%%%%%%%%%%%%%%%%%%%%%%%%%%%%%%%%
%% Support information, if any,             %%
%% should be provided in the                %%
%% Acknowledgements section.                %%
%%%%%%%%%%%%%%%%%%%%%%%%%%%%%%%%%%%%%%%%%%%%%%
\begin{acks}[Acknowledgments]
The authors deeply appreciate Vincent Calvez for his invaluable guidance, which has been instrumental in shaping the work presented here.
\end{acks}
%%%%%%%%%%%%%%%%%%%%%%%%%%%%%%%%%%%%%%%%%%%%%%
%% Funding information, if any,             %%
%% should be provided in the                %%
%% funding section.                         %%
%%%%%%%%%%%%%%%%%%%%%%%%%%%%%%%%%%%%%%%%%%%%%%
\begin{funding}
This project has received funding from the European Research Council (ERC) under the European Union's Horizon 2020 research and innovation program (grant agreement No 865711).
\end{funding}

%%%%%%%%%%%%%%%%%%%%%%%%%%%%%%%%%%%%%%%%%%%%%%
%% Supplementary Material, including data   %%
%% sets and code, should be provided in     %%
%% {supplement} environment with title      %%
%% and short description. It cannot be      %%
%% available exclusively as external link.  %%
%% All Supplementary Material must be       %%
%% available to the reader on Project       %%
%% Euclid with the published article.       %%
%%%%%%%%%%%%%%%%%%%%%%%%%%%%%%%%%%%%%%%%%%%%%%
%\begin{supplement}
%\stitle{???}
%\sdescription{???.}
%\end{supplement}

%%%%%%%%%%%%%%%%%%%%%%%%%%%%%%%%%%%%%%%%%%%%%%%%%%%%%%%%%%%%%
%%                  The Bibliography                       %%
%%                                                         %%
%%  imsart-???.bst  will be used to                        %%
%%  create a .BBL file for submission.                     %%
%%                                                         %%
%%  Note that the displayed Bibliography will not          %%
%%  necessarily be rendered by Latex exactly as specified  %%
%%  in the online Instructions for Authors.                %%
%%                                                         %%
%%  MR numbers will be added by VTeX.                      %%
%%                                                         %%
%%  Use \cite{...} to cite references in text.             %%
%%                                                         %%
%%%%%%%%%%%%%%%%%%%%%%%%%%%%%%%%%%%%%%%%%%%%%%%%%%%%%%%%%%%%%

%% if your bibliography is in bibtex format, uncomment commands:
\bibliographystyle{imsart-number} % Style BST file (imsart-number.bst or imsart-nameyear.bst)
\bibliography{biblio}       % Bibliography file (usually '*.bib')

\begin{thebibliography}{37}
% BibTex style file: imsart-number.bst, 2017-11-03
% Default style options (sort=1,type=number).
% Used options (sort=1,type=number).

\bibitem{adler1966}
\begin{barticle}[author]
\bauthor{\bsnm{Adler},~\bfnm{Julius}\binits{J.}}
(\byear{1966}).
\btitle{Chemotaxis in {{Bacteria}}}.
\bjournal{Science (New York, N.Y.)}
\bvolume{153}
\bpages{708--716}.
\bdoi{10.1126/science.153.3737.708}
\end{barticle}
\endbibitem

\bibitem{an2021}
\begin{barticle}[author]
\bauthor{\bsnm{An},~\bfnm{Jing}\binits{J.}},
  \bauthor{\bsnm{Henderson},~\bfnm{Christopher}\binits{C.}} \AND
  \bauthor{\bsnm{Ryzhik},~\bfnm{Lenya}\binits{L.}}
(\byear{2021}).
\btitle{Pushed, Pulled and Pushmi-Pullyu Fronts of the {{Burgers-FKPP}}
  Equation}.
\bjournal{arXiv:2108.07861 [math]}.
\end{barticle}
\endbibitem

\bibitem{banner2005}
\begin{barticle}[author]
\bauthor{\bsnm{Banner},~\bfnm{Adrian~D.}\binits{A.~D.}},
  \bauthor{\bsnm{Fernholz},~\bfnm{Robert}\binits{R.}} \AND
  \bauthor{\bsnm{Karatzas},~\bfnm{Ioannis}\binits{I.}}
(\byear{2005}).
\btitle{Atlas {{Models}} of {{Equity Markets}}}.
\bjournal{The Annals of Applied Probability}
\bvolume{15}
\bpages{2296--2330}.
\end{barticle}
\endbibitem

\bibitem{berestycki2024}
\begin{bmisc}[author]
\bauthor{\bsnm{Berestycki},~\bfnm{Julien}\binits{J.}} \AND
  \bauthor{\bsnm{Tough},~\bfnm{Oliver}\binits{O.}}
(\byear{2024}).
\btitle{Selection principle for the $N$-BBM}.
\end{bmisc}
\endbibitem

\bibitem{BossyTalay}
\begin{barticle}[author]
\bauthor{\bsnm{Bossy},~\bfnm{Mireille}\binits{M.}} \AND
  \bauthor{\bsnm{Talay},~\bfnm{Denis}\binits{D.}}
(\byear{1996}).
\btitle{Convergence rate for the approximation of the limit law of weakly
  interacting particles: application to the {B}urgers equation}.
\bjournal{Ann. Appl. Probab.}
\bvolume{6}
\bpages{818--861}.
\bdoi{10.1214/aoap/1034968229}
\bmrnumber{1410117}
\end{barticle}
\endbibitem

\bibitem{bramson1978}
\begin{barticle}[author]
\bauthor{\bsnm{Bramson},~\bfnm{Maury~D.}\binits{M.~D.}}
(\byear{1978}).
\btitle{{Maximal displacement of branching brownian motion}}.
\bjournal{Communications on Pure and Applied Mathematics}
\bvolume{31}
\bpages{531--581}.
\bdoi{10.1002/cpa.3160310502}
\end{barticle}
\endbibitem

\bibitem{calvez2022}
\begin{barticle}[author]
\bauthor{\bsnm{Calvez},~\bfnm{Vincent}\binits{V.}},
  \bauthor{\bsnm{Henry},~\bfnm{Beno{\^\i}t}\binits{B.}},
  \bauthor{\bsnm{M\'el\'eard},~\bfnm{Sylvie}\binits{S.}} \AND
  \bauthor{\bsnm{Tran},~\bfnm{Viet~Chi}\binits{V.~C.}}
(\byear{2022}).
\btitle{Dynamics of lineages in adaptation to a gradual environmental change}.
\bjournal{Annales Henri Lebesgue}
\bvolume{5}
\bpages{729--777}.
\bdoi{10.5802/ahl.135}
\end{barticle}
\endbibitem

\bibitem{cochet2021}
\begin{barticle}[author]
\bauthor{\bsnm{Cochet-Escartin},~\bfnm{Olivier}\binits{O.}},
  \bauthor{\bsnm{Demircigil},~\bfnm{Mete}\binits{M.}},
  \bauthor{\bsnm{Hirose},~\bfnm{Satomi}\binits{S.}},
  \bauthor{\bsnm{Allais},~\bfnm{Blandine}\binits{B.}},
  \bauthor{\bsnm{Gonzalo},~\bfnm{Philippe}\binits{P.}},
  \bauthor{\bsnm{Mikaelian},~\bfnm{Ivan}\binits{I.}},
  \bauthor{\bsnm{Funamoto},~\bfnm{Kenichi}\binits{K.}},
  \bauthor{\bsnm{Anjard},~\bfnm{Christophe}\binits{C.}},
  \bauthor{\bsnm{Calvez},~\bfnm{Vincent}\binits{V.}} \AND
  \bauthor{\bsnm{Rieu},~\bfnm{Jean-Paul}\binits{J.-P.}}
\btitle{Hypoxia triggers collective aerotactic migration in Dictyostelium
  discoideum}.
\bvolume{10}
\bpages{e64731}.
\bdoi{10.7554/eLife.64731}
\end{barticle}
\endbibitem

\bibitem{daley2003}
\begin{bbook}[author]
\bauthor{\bsnm{Daley},~\bfnm{Daryl~J.}\binits{D.~J.}} \AND
  \bauthor{\bsnm{Vere-Jones},~\bfnm{David}\binits{D.}}
(\byear{2003}).
\btitle{An {Introduction} to the {Theory} of {Point} {Processes}}.
\bseries{Probability and its {Applications}}.
\bpublisher{Springer-Verlag}, \baddress{New York}.
\bdoi{10.1007/b97277}
\end{bbook}
\endbibitem

\bibitem{demasi2019}
\begin{binproceedings}[author]
\bauthor{\bsnm{De~Masi},~\bfnm{Anna}\binits{A.}},
  \bauthor{\bsnm{Ferrari},~\bfnm{Pablo~A.}\binits{P.~A.}},
  \bauthor{\bsnm{Presutti},~\bfnm{Errico}\binits{E.}} \AND
  \bauthor{\bsnm{{Soprano-Loto}},~\bfnm{Nahuel}\binits{N.}}
(\byear{2019}).
\btitle{Hydrodynamics of the {{N-BBM Process}}}.
In \bbooktitle{Stochastic {{Dynamics Out}} of {{Equilibrium}}}
(\beditor{\bfnm{Giambattista}\binits{G.}~\bsnm{Giacomin}},
  \beditor{\bfnm{Stefano}\binits{S.}~\bsnm{Olla}},
  \beditor{\bfnm{Ellen}\binits{E.}~\bsnm{Saada}},
  \beditor{\bfnm{Herbert}\binits{H.}~\bsnm{Spohn}} \AND
  \beditor{\bfnm{Gabriel}\binits{G.}~\bsnm{Stoltz}}, eds.).
\bseries{Springer {{Proceedings}} in {{Mathematics}} \& {{Statistics}}}
\bpages{523--549}.
\bpublisher{{Springer International Publishing}}, \baddress{{Cham}}.
\end{binproceedings}
\endbibitem

\bibitem{demircigil2022}
\begin{bunpublished}[author]
\bauthor{\bsnm{Demircigil},~\bfnm{Mete}\binits{M.}}
(\byear{2022}).
\btitle{When {{Self-Generated Gradients}} Interact with {{Expansion}} by {{Cell
  Division}} and {{Diffusion}}. {{Analysis}} of a {{Minimal Model}}}.
\end{bunpublished}
\endbibitem

\bibitem{demircigilhenderson}
\begin{bunpublished}[author]
\bauthor{\bsnm{Demircigil},~\bfnm{Mete}\binits{M.}} \AND
  \bauthor{\bsnm{Henderson},~\bfnm{Christopher}\binits{C.}}
(\byear{2025}).
\btitle{(in preparation)}.
\end{bunpublished}
\endbibitem

\bibitem{PLMLAB}
\begin{bmisc}[author]
\bauthor{\bsnm{Demircigil},~\bfnm{Mete}\binits{M.}} \AND
  \bauthor{\bsnm{Tomasevic},~\bfnm{Milica}\binits{M.}}
\btitle{Stochastic Go or Grow}.
\bhowpublished{\url{https://plmlab.math.cnrs.fr/mete.demircigil/stochastic-go-or-grow}}.
\end{bmisc}
\endbibitem

\bibitem{ethier1986}
\begin{bbook}[author]
\bauthor{\bsnm{Ethier},~\bfnm{Stewart~N.}\binits{S.~N.}} \AND
  \bauthor{\bsnm{Kurtz},~\bfnm{Thomas~G.}\binits{T.~G.}}
(\byear{1986}).
\btitle{Markov {Processes}},
\bedition{1} ed.
\bpublisher{John Wiley \& Sons, Ltd}
\bnote{\_eprint:
  https://onlinelibrary.wiley.com/doi/pdf/10.1002/9780470316658}.
\bdoi{10.1002/9780470316658}
\end{bbook}
\endbibitem

\bibitem{fernholz2002}
\begin{bincollection}[author]
\bauthor{\bsnm{Fernholz},~\bfnm{E~Robert}\binits{E.~R.}}
(\byear{2002}).
\btitle{Stochastic Portfolio Theory}.
In \bbooktitle{Stochastic Portfolio Theory}
\bpages{1--24}.
\bpublisher{{Springer}}.
\end{bincollection}
\endbibitem

\bibitem{FontbonaMeleard}
\begin{barticle}[author]
\bauthor{\bsnm{Fontbona},~\bfnm{J.}\binits{J.}} \AND
  \bauthor{\bsnm{M\'el\'eard},~\bfnm{S.}\binits{S.}}
(\byear{2015}).
\btitle{Non local {L}otka-{V}olterra system with cross-diffusion in an
  heterogeneous medium}.
\bjournal{Journal of Mathematical Biology}
\bvolume{70}
\bpages{829--854}.
\end{barticle}
\endbibitem

\bibitem{forien2022}
\begin{barticle}[author]
\bauthor{\bsnm{Forien},~\bfnm{Raphaël}\binits{R.}},
  \bauthor{\bsnm{Garnier},~\bfnm{Jimmy}\binits{J.}} \AND
  \bauthor{\bsnm{Patout},~\bfnm{Florian}\binits{F.}}
(\byear{2022}).
\btitle{Ancestral {Lineages} in {Mutation} {Selection} {Equilibria} with
  {Moving} {Optimum}}.
\bjournal{Bulletin of Mathematical Biology}
\bvolume{84}
\bpages{93}.
\bdoi{10.1007/s11538-022-01048-w}
\end{barticle}
\endbibitem

\bibitem{garnier2012}
\begin{barticle}[author]
\bauthor{\bsnm{Garnier},~\bfnm{Jimmy}\binits{J.}},
  \bauthor{\bsnm{Giletti},~\bfnm{Thomas}\binits{T.}},
  \bauthor{\bsnm{Hamel},~\bfnm{Fran{\c c}ois}\binits{F.}} \AND
  \bauthor{\bsnm{Roques},~\bfnm{Lionel}\binits{L.}}
(\byear{2012}).
\btitle{Inside Dynamics of Pulled and Pushed Fronts}.
\bjournal{Journal de Math\'ematiques Pures et Appliqu\'ees}
\bvolume{98}
\bpages{428--449}.
\bdoi{10.1016/j.matpur.2012.02.005}
\end{barticle}
\endbibitem

\bibitem{groisman2021}
\begin{barticle}[author]
\bauthor{\bsnm{Groisman},~\bfnm{Pablo}\binits{P.}} \AND
  \bauthor{\bsnm{{Soprano-Loto}},~\bfnm{Nahuel}\binits{N.}}
(\byear{2021}).
\btitle{Rank Dependent Branching-Selection Particle Systems}.
\bjournal{Electronic Journal of Probability}
\bvolume{26}
\bpages{1--27}.
\bdoi{10.1214/21-EJP724}
\end{barticle}
\endbibitem

\bibitem{hatzikirou2012}
\begin{barticle}[author]
\bauthor{\bsnm{Hatzikirou},~\bfnm{H.}\binits{H.}},
  \bauthor{\bsnm{Basanta},~\bfnm{D.}\binits{D.}},
  \bauthor{\bsnm{Simon},~\bfnm{M.}\binits{M.}},
  \bauthor{\bsnm{Schaller},~\bfnm{K.}\binits{K.}} \AND
  \bauthor{\bsnm{Deutsch},~\bfnm{A.}\binits{A.}}
(\byear{2012}).
\btitle{'{{Go}} or Grow': {{The}} Key to the Emergence of Invasion in Tumour
  Progression?}
\bjournal{Mathematical medicine and biology: a journal of the IMA}
\bvolume{29}
\bpages{49--65}.
\bdoi{10.1093/imammb/dqq011}
\end{barticle}
\endbibitem

\bibitem{ichiba2013}
\begin{barticle}[author]
\bauthor{\bsnm{Ichiba},~\bfnm{Tomoyuki}\binits{T.}},
  \bauthor{\bsnm{Pal},~\bfnm{Soumik}\binits{S.}} \AND
  \bauthor{\bsnm{Shkolnikov},~\bfnm{Mykhaylo}\binits{M.}}
(\byear{2013}).
\btitle{Convergence Rates for Rank-Based Models with Applications to Portfolio
  Theory}.
\bjournal{Probability Theory and Related Fields}
\bvolume{156}
\bpages{415--448}.
\bdoi{10.1007/s00440-012-0432-5}
\end{barticle}
\endbibitem

\bibitem{joffe1986}
\begin{barticle}[author]
\bauthor{\bsnm{Joffe},~\bfnm{A.}\binits{A.}} \AND
  \bauthor{\bsnm{M\'etivier},~\bfnm{M.}\binits{M.}}
(\byear{1986}).
\btitle{Weak convergence of sequences of semimartingales with applications to
  multitype branching processes}.
\bjournal{Advances in Applied Probability}
\bvolume{18}
\bpages{20--65}.
\bdoi{10.2307/1427238}
\end{barticle}
\endbibitem

\bibitem{jourdain2000}
\begin{barticle}[author]
\bauthor{\bsnm{Jourdain},~\bfnm{B.}\binits{B.}}
(\byear{2000}).
\btitle{Probabilistic approximation for a porous medium equation}.
\bjournal{Stochastic Processes and their Applications}
\bvolume{89}
\bpages{81-99}.
\bdoi{https://doi.org/10.1016/S0304-4149(00)00014-4}
\end{barticle}
\endbibitem

\bibitem{jourdain2000bis}
\begin{barticle}[author]
\bauthor{\bsnm{Jourdain},~\bfnm{B.}\binits{B.}}
(\byear{2000}).
\btitle{Diffusion Processes Associated with Nonlinear Evolution Equations for
  Signed Measures}.
\bjournal{Methodology and Computing in Applied Probability}
\bvolume{2}
\bpages{69-91}.
\end{barticle}
\endbibitem

\bibitem{jourdain2013}
\begin{barticle}[author]
\bauthor{\bsnm{Jourdain},~\bfnm{Benjamin}\binits{B.}} \AND
  \bauthor{\bsnm{Reygner},~\bfnm{Julien}\binits{J.}}
(\byear{2013}).
\btitle{Propagation of Chaos for Rank-Based Interacting Diffusions and Long
  Time Behaviour of a Scalar Quasilinear Parabolic Equation}.
\bjournal{Stochastic partial differential equations: analysis and computations}
\bpages{http://dx.doi.org/10.1007/s40072}.
\bdoi{10.1007/s40072-013-0014-2}
\end{barticle}
\endbibitem

\bibitem{jourdain2014}
\begin{barticle}[author]
\bauthor{\bsnm{Jourdain},~\bfnm{Benjamin}\binits{B.}} \AND
  \bauthor{\bsnm{Reygner},~\bfnm{Julien}\binits{J.}}
(\byear{2014}).
\btitle{The Small Noise Limit of Order-Based Diffusion Processes}.
\bjournal{Electronic Journal of Probability}
\bvolume{19}
\bpages{1--36}.
\bdoi{10.1214/EJP.v19-2906}
\end{barticle}
\endbibitem

\bibitem{lunardi2009}
\begin{bbook}[author]
\bauthor{\bsnm{Lunardi},~\bfnm{Alessandra}\binits{A.}}
\btitle{Interpolation theory}
\bvolume{9}.
\bpublisher{Edizioni della normale}.
\end{bbook}
\endbibitem

\bibitem{maillard2016}
\begin{barticle}[author]
\bauthor{\bsnm{Maillard},~\bfnm{Pascal}\binits{P.}}
(\byear{2016}).
\btitle{Speed and Fluctuations of {{N-particle}} Branching {{Brownian}} Motion
  with Spatial Selection}.
\bjournal{Probability Theory and Related Fields}
\bvolume{166}
\bpages{1061--1173}.
\end{barticle}
\endbibitem

\bibitem{meleard1993}
\begin{barticle}[author]
\bauthor{\bsnm{M\'el\'eard},~\bfnm{Sylvie}\binits{S.}} \AND
  \bauthor{\bsnm{Roelly},~\bfnm{Sylvie}\binits{S.}}
(\byear{1993}).
\btitle{On the weak and vague convergence of measure-valued processes}.
\bjournal{Comptes Rendus de l'Acad\'emie des Sciences. S\'erie I}
\bvolume{317}
\bpages{785--788}.
\end{barticle}
\endbibitem

\bibitem{MelTran}
\begin{barticle}[author]
\bauthor{\bsnm{M\'el\'eard},~\bfnm{Sylvie}\binits{S.}} \AND
  \bauthor{\bsnm{Tran},~\bfnm{Viet~Chi}\binits{V.~C.}}
(\byear{2012}).
\btitle{Slow and fast scales for superprocess limits of age-structured
  populations}.
\bjournal{Stochastic Processes and their Applications}
\bvolume{122}
\bpages{250-276}.
\bdoi{https://doi.org/10.1016/j.spa.2011.08.007}
\end{barticle}
\endbibitem

\bibitem{pal2008}
\begin{barticle}[author]
\bauthor{\bsnm{Pal},~\bfnm{Soumik}\binits{S.}} \AND
  \bauthor{\bsnm{Pitman},~\bfnm{Jim}\binits{J.}}
(\byear{2008}).
\btitle{One-Dimensional {{Brownian}} Particle Systems with Rank-Dependent
  Drifts}.
\bjournal{The Annals of Applied Probability}
\bvolume{18}
\bpages{2179--2207}.
\bdoi{10.1214/08-AAP516}
\end{barticle}
\endbibitem

\bibitem{pollard1984}
\begin{bbook}[author]
\bauthor{\bsnm{Pollard},~\bfnm{David}\binits{D.}}
(\byear{1984}).
\btitle{Convergence of {Stochastic} {Processes}}.
\bseries{Springer {Series} in {Statistics}}.
\bpublisher{Springer}, \baddress{New York, NY}.
\bdoi{10.1007/978-1-4612-5254-2}
\end{bbook}
\endbibitem

\bibitem{roelly1986}
\begin{barticle}[author]
\bauthor{\bsnm{Roelly\textendash~Coppoletta},~\bfnm{Sylvie}\binits{S.}}
(\byear{1986}).
\btitle{A criterion of convergence of measure\textendash valued processes:
  application to measure branching processes}.
\bjournal{Stochastics}
\bvolume{17}
\bpages{43--65}.
\bdoi{10.1080/17442508608833382}
\end{barticle}
\endbibitem

\bibitem{roques2012}
\begin{barticle}[author]
\bauthor{\bsnm{Roques},~\bfnm{Lionel}\binits{L.}},
  \bauthor{\bsnm{Garnier},~\bfnm{Jimmy}\binits{J.}},
  \bauthor{\bsnm{Hamel},~\bfnm{Fran{\c c}ois}\binits{F.}} \AND
  \bauthor{\bsnm{Klein},~\bfnm{Etienne~K.}\binits{E.~K.}}
(\byear{2012}).
\btitle{Allee Effect Promotes Diversity in Traveling Waves of Colonization}.
\bjournal{Proceedings of the National Academy of Sciences}
\bvolume{109}
\bpages{8828--8833}.
\bdoi{10.1073/pnas.1201695109}
\end{barticle}
\endbibitem

\bibitem{saragosti2011}
\begin{barticle}[author]
\bauthor{\bsnm{Saragosti},~\bfnm{J.}\binits{J.}},
  \bauthor{\bsnm{Calvez},~\bfnm{V.}\binits{V.}},
  \bauthor{\bsnm{Bournaveas},~\bfnm{N.}\binits{N.}},
  \bauthor{\bsnm{Perthame},~\bfnm{B.}\binits{B.}},
  \bauthor{\bsnm{Buguin},~\bfnm{A.}\binits{A.}} \AND
  \bauthor{\bsnm{Silberzan},~\bfnm{P.}\binits{P.}}
(\byear{2011}).
\btitle{Directional Persistence of Chemotactic Bacteria in a Traveling
  Concentration Wave}.
\bjournal{Proceedings of the National Academy of Sciences}
\bvolume{108}
\bpages{16235--16240}.
\bdoi{10.1073/pnas.1101996108}
\end{barticle}
\endbibitem

\bibitem{stokes1976}
\begin{barticle}[author]
\bauthor{\bsnm{Stokes},~\bfnm{A.~N.}\binits{A.~N.}}
(\byear{1976}).
\btitle{On Two Types of Moving Front in Quasilinear Diffusion}.
\bjournal{Mathematical Biosciences}
\bvolume{31}
\bpages{307--315}.
\bdoi{10.1016/0025-5564(76)90087-0}
\end{barticle}
\endbibitem

\bibitem{Veretennikov1982}
\begin{barticle}[author]
\bauthor{\bsnm{Veretennikov},~\bfnm{A.~Yu.}\binits{A.~Y.}}
(\byear{1982}).
\btitle{Parabolic equations and It{\^o}'s stochastic equations with
  coefficients discontinuous in the time variable}.
\bjournal{Mathematical notes of the Academy of Sciences of the USSR}
\bvolume{31}
\bpages{278--283}.
\bdoi{10.1007/BF01138937}
\end{barticle}
\endbibitem

\end{thebibliography}

%% or include bibliography directly:
% \begin{thebibliography}{}
% \bibitem{b1}
% \end{thebibliography}

\end{document}